\crefname{hypothesis}{Hypothesis}{Hypotheses}
\crefname{fact}{Fact}{Facts}
\crefname{subsection}{Section}{Sections}
\Crefname{subsection}{Section}{Sections}
\crefname{section}{Section}{Sections}
\title{Optimal transfer operators in algebraic two-level methods for nonsymmetric and indefinite problems\thanks{Submitted to the editors DATE.\funding{This research has been funded by the Los Alamos National Laboratory Advanced Simulation and Computation, and the DOE Office of Advanced Scientific Computing Research Applied Mathematics program through Contract No. 89233218CNA000001. The research was performed under the auspices of the National Nuclear Security Administration of the U.S. Department of Energy at Los Alamos National Laboratory, managed by Triad National Security, LLC under contract 89233218CNA000001. LA-UR-25-24473.
}}}
\author{Oliver A. Krzysik\thanks{Theoretical Division, Los Alamos National Laboratory, Los Alamos, NM 87545, USA (\email{okrzysik@lanl.gov})
}
\and
Ben S. Southworth\thanks{Theoretical Division, Los Alamos National Laboratory, Los Alamos, NM 87545, USA (\email{southworth@lanl.gov})
}
\and
Golo A. Wimmer\thanks{Theoretical Division, Los Alamos National Laboratory, Los Alamos, NM 87545, USA (\email{gwimmer@lanl.gov})
}
\and
Ahsan Ali\thanks{Department of Mathematics, Baylor University, Waco, TX 76706 USA
(\email{ahsan\_ali2@baylor.edu})
}
\and
James Brannick\thanks{Department of Mathematics, Penn State University, University Park, PA 16802, USA
(\email{jbrannick@gmail.com})
}
\and
Karsten Kahl\thanks{School of Mathematics and Natural Sciences, Bergische Universit\"{a}t Wuppertal, Wuppertal 42119, Germany (\email{kkhal@uni-wuppertal.de})
}}
\DeclareMathOperator{\diag}{diag}
\newcommand{\wh}[1]{\widehat{#1}} 						\newcommand{\wt}[1]{\widetilde{#1}}
\DeclareFontFamily{U}{mathx}{}
\DeclareFontShape{U}{mathx}{m}{n}{<-> mathx10}{}
\DeclareSymbolFont{mathx}{U}{mathx}{m}{n}
\DeclareMathAccent{\wc}{0}{mathx}{"71} %
\DeclareMathAccent{\wbar}{0}{mathx}{"73} %
\renewcommand{\Re}{\textnormal{real}}
\renewcommand{\Im}{\textnormal{imag}}
\newcommand{\cc}[1]{\overline{#1}}
\newcommand{\ra}[0]{\ensuremath{\rangle}}
\newcommand{\la}[0]{\ensuremath{\langle}}
\DeclareMathOperator{\rank}{rank}
\DeclareMathOperator{\ran}{range}
\DeclareMathOperator{\nul}{null}
\renewcommand{\d}[0]{\ensuremath{\operatorname{d}\!}} %
\DeclareMathOperator*{\argmin}{arg\,min}
\begin{document}

\maketitle

\begin{abstract}
Consider an algebraic two-level method applied to the $n$-dimensional linear system $A \bm{x} = \bm{b}$ using fine-space preconditioner (i.e., ``relaxation'' or ``smoother'') $M$, with $M \approx A$, restriction and interpolation $R$ and $P$, and algebraic coarse-space operator ${A_c\coloneqq R^*AP}$. Then, what are the the best possible transfer operators $R$ and $P$ of a given dimension $n_c < n$? 
    Brannick et al. \cite{Brannick-etal-2018-optimalP} showed that when $A$ and $M$ are Hermitian positive definite (HPD), the optimal interpolation is such that its range contains the $n_c$ smallest generalized eigenvectors of the matrix pencil $(A, M)$.
    Recently, in Ali et al. \cite{Ali-etal-2024-optimalP-gen} we generalized this framework to the non-HPD setting, by considering both right (interpolation) and left (restriction) generalized eigenvectors of $(A, M)$ and defining corresponding nonsymmetric transfer operators $\{R_\#,P_\#\}$. Tight convergence bounds for $\{R_\#,P_\#\}$ are derived in spectral radius, as well as a proof of pseudo-optimality. Note, $\{R_\#,P_\#\}$ are typically complex valued, which is not practical for real-valued problems.
    Here we build on \cite{Ali-etal-2024-optimalP-gen}, first characterizing all inner products in which the coarse-space correction defined by $\{R_\#,P_\#\}$ is orthogonal. 
    We then develop tight two-level convergence bounds in these norms, and prove that the underlying transfer operators $\{R_\#,P_\#\}$ are genuinely optimal.
    As a special case, our theory both recovers and extends the HPD results from \cite{Brannick-etal-2018-optimalP}.
    Finally, we show how to construct optimal, \emph{real-valued} transfer operators in the case of that $A$ and $M$ are real valued, but are not HPD.
    Numerical examples arising from discretized advection and wave-equation problems are used to verify and illustrate the theory.
\end{abstract}

\begin{keywords}
Two-level methods, 
optimal methods,
algebraic multigrid (AMG), 
spectral methods, 
generalized eigenvalues, 
nonsymmetric, indefinite
\end{keywords}

\begin{MSCcodes}
65N55, %
65F10, %
65F15, %
65F35, %
65F50  %
\end{MSCcodes}

\section{Introduction}
We consider iterative, algebraic two-level methods for linear systems $A \bm{x} = \bm{b}$, for an invertible matrix ${A \in \mathcal{F}^{n \times n}}$, a known vector $\bm{b} \in \mathcal{F}^n$, and a field $\mathcal{F}$ given by $\mathbb{R}$ or $\mathbb{C}$.
Algebraic two-level methods are built on two fundamental components: 
(i) a fine-space preconditioner (often called ``relaxation'' or ``smoothing'' in the context of multigrid) with iteration given by
\begin{align} \label{eq:relax}
    \bm{x}_{k} \gets \bm{x}_k + M^{-1} ( \bm{b} - A \bm{x}_k ),
\end{align}
where $M \approx A$ is a preconditioner whose inversion should be inexpensive relative to that of $A$, and (ii) a coarse-space correction,
\begin{align} \label{eq:cgc}
    \bm{x}_k \gets \bm{x}_k + P (R^* A P)^{-1} R^* ( \bm{b} - A \bm{x}_k ), 
\end{align}
where $P \in \mathcal{F}^{n \times n_c}$ interpolates corrections from an $n_c$-dimensional coarse subspace, and $R \in \mathcal{F}^{n \times n_c}$ restricts residuals from the fine space to the coarse space.
Here $k$ is an iteration index.
Letting $\bm{e}_k = \bm{x} - \bm{x}_k$ denote the error in the approximation $\bm{x}_k$, then error propagation of \eqref{eq:relax} takes the form $\bm{e}_k \gets (I - M^{-1}A) \bm{e}_k$ and error propagation of \eqref{eq:cgc} is given by
$\bm{e}_k \gets (I - \Pi(P, R))\bm{e}_k$, where ${\Pi(P, R) \coloneqq P (R^* A P)^{-1} R^* A}$ is a projection onto $\ran(P)$, such that the step \eqref{eq:cgc} eliminates error in $\bm{x}_k$ that resides in the subspace ${\ran(P) \subset \mathcal{F}^n}$. An efficient two-level method necessitates that the two components \eqref{eq:relax} and \eqref{eq:cgc} be \emph{complementary} to one another.

There are many forms of algebraic two-level methods, including algebraic domain decomposition (DD) \cite{Al-Daas-2021-DD,al2025robust}, algebraic multigrid (AMG) \cite{BRANDT-1986-AMG,Ruge-Stuben-1987,Falgout-Vassilevski-2004-AMG-generalizing,Falgout-etal-2005-two-grid-conv}, element-based AMG \cite{Brezina-etal-2001-AMGe,Chartier-etal-2003-spectral-AMG}, multiscale finite elements \cite{yang2019two,aarnes2002multiscale,galvis2010domain}, algebraic multilevel iterations \cite{axelsson1989algebraic,axelsson1990algebraic,axelsson2003survey}, etc; see also \cite{Notay-2005} for a comparison of some of these methods.
When $A$ is Hermitian positive definite (HPD), $A$ naturally induces an energy norm, and thus a basis in which to understand error reduction. 
In this setting, one would typically use $R = P$, such that $\Pi(P, P)$ is an $A$-orthogonal projection onto $\ran(P)$. Convergence of two-level methods for HPD $A$ is well understood, including tight two-level convergence bounds, and \emph{optimal} interpolation operators given a prescribed fine-space preconditioner, e.g. \cite{Brannick-etal-2018-optimalP,Falgout-etal-2005-two-grid-conv,Falgout-Vassilevski-2004-AMG-generalizing,vassilevski2010lecture}. The optimal interpolation formula has led to various local approximations for practical algorithms, e.g. \cite{Brannick-etal-2018-optimalP,doi:10.1137/100796376,galvis2010domain,doi:10.1137/100790112}.

For non-HPD $A$ things are more complicated. There is no natural energy norm to work in, so it is typically unclear what basis the error should be represented in. As a result, the coarse-space correction is typically not orthogonal in a known inner product, and if it is not, then it must necessarily increase error in \emph{all} norms \cite{Southworth-Manteuffel-2024-AMG}, putting stringent requirements on the effectiveness of $M$. Although there are effective methods for certain nonsymmetric problems, e.g. \cite{BAMG_Wilson,manteuffel2017root,manteuffel2018nonsymmetric,manteuffel2019nonsymmetric,Wiesner:2014cy,Sala:2008cv,Notay:2000vy,clair,dargaville2024air}, there remain problems for which no effective methods are known. 
 Moreover, though various results have been obtained for non-HPD systems, there does not exist a cohesive, unifying framework for designing effective algebraic two-level methods.

Another aspect of nonsymmetric convergence theory that has not been completely addressed is how to  incorporate non-point-wise smoothers in the two-level solver. In \cite{Manteuffel-Southworth-2019-NS-AMG} general two-level convergence and approximation properties are considered for nonsymmetric problems with Richardson preconditioning in terms of left and right singular vectors. Block F-relaxation is considered in \cite{manteuffel2018nonsymmetric,manteuffel2019nonsymmetric} due to its special relation to ``ideal'' transfer operators. Recent work has also considered fine-level Schwarz preconditioners in algebraic multilevel methods, e.g. \cite{al2025robust}. Fine-space preconditioning is incorporated to some extent in the spectral nonsymmetric theory developed in \cite{Notay:2010em}, but the question of how, given a fine-space preconditioner, to construct complementary transfer operators remains open for general non-HPD problems. An even more fundamental issue than complementarity for non-HPD problems is that fine-space preconditioning in the nonsymmetric setting is often not even guaranteed to converge; for example, typical point-wise fine-space preconditioners such as Jacobi or Gauss-Seidel tend not to have convergence guarantees. As such, it is difficult to contextualize the role and effects of $M$, and hence understand what exactly it means for the coarse space to be complementary to it. Such challenges motivate the use of more complicated and advanced fine-space preconditioners, but the design of effective two-level methods still requires us to understand their relation to transfer operators. For example, in the symmetric positive-definite (SPD) setting two-sided convergence bounds are used in \cite{zikatanov2008two} to \emph{prove} that a two-level method with point-wise smoother for a variational problem in $H_0($curl$)$ cannot have optimal convergence rate. We seek to extend such analysis capabilities to nonsymmetric and indefinite problems.

This paper addresses the questions \emph{given matrix $A$ and fine-space preconditioner $M$, (i) is there a convergent algebraic two-level method of coarse size $n_c< n$, and (ii) what are the optimal interpolation and restriction operators?} The optimal transfer operators we derive are not immediately practical, but the results offer insight on the design of nonsymmetric two-level methods, particularly for more complicated fine-space preconditioning schemes. We build on recent work by Ali et al. \cite{Ali-etal-2024-optimalP-gen}, where transfer operators $\{R_\#,P_\#\}$ are proposed based on left and right generalized eigenvectors, respectively. For two-level methods based on $\{R_\#,P_\#\}$, tight convergence bounds are derived with respect to the spectral radius of error propagation, and a proof of pseudo-optimality of $\{R_\#,P_\#\}$ is provided, in the sense of $\{R_\#,P_\#\}$ being optimal over a certain subspace of restriction and interpolation operators. As it turns out, we prove in this paper that the transfer operators derived in \cite{Ali-etal-2024-optimalP-gen} are \emph{genuinely optimal} with respect to a full class of norms. Here we strengthen the results from \cite{Ali-etal-2024-optimalP-gen}, first characterizing all inner products in which the resulting coarse-space correction defined by $\{R_\#,P_\#\}$ is orthogonal. 
We then develop tight two-level convergence bounds in a subset of these norms, and prove that the underlying (pseudo-)optimal transfer operators from \cite{Ali-etal-2024-optimalP-gen} are genuinely optimal. This leads to a set of necessary and sufficient conditions for a convergent two-level method, given $A$ and $M$. Finally, noting that coarse-grid correction is invariant under change of basis, e.g. \cite{Brannick-etal-2018-optimalP,Manteuffel-Southworth-2019-NS-AMG}, we develop a coarse-space change of basis that yields optimal \emph{real-valued} transfer operators in the case of $A,M\in\mathbb{R}^{n\times n}$, with identical convergence properties as the optimal complex transfer operators.

The remainder of this paper is organized as follows.
\Cref{sec:prelims} presents preliminaries, including notation and assumptions, and a review of previous optimal transfer operator results in the HPD \cite{Brannick-etal-2018-optimalP} and non-HPD \cite{Ali-etal-2024-optimalP-gen} settings.
\Cref{sec:norm-conv-opt} develops a class of norms in which to consider convergence of algebraic two-level methods based on optimal transfer operators. It also establishes the optimality of these operators in this class of norms.
\Cref{sec:real-valued} develops real-valued optimal transfer operators when the pencil $(A, M)$ is real valued, and presents theoretical convergence results for them that are equivalent to those from \cref{sec:norm-conv-opt} for their complex-valued counterparts. 
\Cref{sec:num} presents numerical results to complement the theoretical results, considering a hyperbolic advection-reaction equation and wave equation. We demonstrate good predictive accuracy of convergence bounds (theoretically posed in terms of spectral radius or nonstandard inner products) against observed $\ell^2$-error and $\ell^2$-residual convergence, and demonstrate various practical insights in developing two-level algebraic methods. Conclusions and future work are discussed in \cref{sec:con}.

\section{Preliminaries}
\label{sec:prelims}

\subsection{Notation}
\label{sec:notation}
We consider real or complex square matrix pencils $(A,M)$ for $A,M\in\mathbb{C}^{n\times n}$ or $\mathbb{R}^{n\times n}$. Throughout the paper we assume that $M^{-1}A$ and $M^{-*}A^*$ are diagonalizable, which provides sufficient conditions for the left and right generalized eigenvectors to be well defined and invertible. A superscript ``$*$'' denotes the complex-conjugate transpose of a matrix and a superscript ``$\top$'' denotes the regular transpose of a matrix. 
We assume a splitting of the matrix dimension $n = n_c+n_f$, for coarse-space dimension $n_c \in [1, n]$, and fine-space dimension $n_f  \coloneqq  n - n_c \in [0, n-1]$. We let $R, P\in\mathbb{C}^{n\times n_c}$ (or $\mathbb{R}^{n\times n_c}$), wherein the restriction operator is applied via $R^*$. A subscript on a matrix of the form $[a:b, a:b]$ denotes the submatrix given by rows and columns indexed $a,...,b$; a subscript on a matrix of the form $[a:b]$ denotes all columns $a,...,b$. We order matrix degrees of freedom (DOFs) with C-points first and F-points last. Although this convention is non-standard with respect to AMG literature, it is more natural in the present setting where the first $n_c$ eigenvectors in a particular basis induce a coarse space. When matrices are ordered based on C- and F-DOFs we use subscripts $cc, cf, fc, ff$ to denote the associated submatrices.
If $\{ \lambda_j \}_{j = 1}^n$ are the generalized eigenvalues of the matrix pencil $(A, M)$, then they are ordered according to $|1 - \lambda_1| \geq |1 - \lambda_2| \geq \cdots \geq |1 - \lambda_n| \geq 0$.
Let $\Vert \cdot \Vert  \coloneqq  \la \cdot, \cdot \ra^{1/2}$ denote the regular $\ell^2$-norm, and $\Vert \cdot \Vert_{{\cal S}}  \coloneqq  \la {\cal S} \cdot, \cdot \ra^{1/2}$ denote the induced ${\cal S}$-norm for any HPD matrix ${\cal S}$ (or SPD for real-valued operators).
An operator ${\cal A}$ is orthogonal in the ${\cal S}$-inner product, or simply ``${\cal A}$ is ${\cal S}$-orthogonal,'' if 
$\la {\cal A} \cdot, \cdot \ra_{{\cal S}} = \la \cdot,  {\cal A} \cdot \ra_{{\cal S}}$.
Last, to avoid special cases of singular coarse spaces, we assume $R^*AP$ is invertible. We point out that this has no effect on optimality though, as a singular coarse space, even if addressed properly with a pseudoinverse, would necessarily make $\ran(\Pi)$ of smaller dimension $\hat{n}<n_c = \mathrm{dim}(\ran(P))$ for coarse-space projection $\Pi\coloneqq P(R^*AP)^{-1}R^*A$. As a result, convergence cannot be better than obtained with full rank $R^*AP$ and the corresponding projection of dimension $n_c$.

\subsection{Review of existing results in the non-HPD setting}
\label{sec:review}

The error propagation operator for a two-level method with fine-space preconditioner $M \approx A$ using ${\nu_1, \nu_2\in \mathbb{Z}_{\geq 0}}$ pre- and post-fine-space-preconditioner iterations, respectively, is given by
\begin{equation} \label{eq:two-grid}
    \begin{aligned}
    E_{\rm TG}^{\nu_1, \nu_2}(P, R) 
    &\coloneqq 
    (I - M^{-1}A)^{\nu_2} [I - \Pi(P, R)] (I - M^{-1}A)^{\nu_1},
    \\
    \Pi(P, R) 
    &:=
    P(R^*AP)^{-1}R^*A.
    \end{aligned}
\end{equation}
We also denote one iteration of this method as a two-level V$(\nu_1,\nu_2)$ cycle.
In the context of HPD $A$ and $M$, where one takes $R = P$, Brannick et al. \cite{Brannick-etal-2018-optimalP} showed that any $P$ whose range spans the $n_c$ smallest eigenvectors of the generalized eigenvalue problem characterized by the pencil $(A, M)$ minimizes the A-norm of two-level V(0, 1) error propagation.
These results were extended to the nonsymmetric setting by Ali et al. \cite{Ali-etal-2024-optimalP-gen}, which we now review. We begin with a result on a matrix-induced orthogonality arising from a generalized eigenvalue problem that is fundamental to the derivation of the optimal operators.

\begin{lemma}[Lemma 4.1 in \cite{Ali-etal-2024-optimalP-gen}]
\label{lem:orth-complex}
Let $A,M \in\mathbb{C}^{n\times n}$ be such that $M$ is invertible and $M^{-1}A$ is diagonalizable. Consider the left and right generalized eigenvectors, $V_{l},V_{r}\in\mathbb{C}^{n\times n}$, respectively, of the matrix pencil $(A,M)$, defined such that
    \begin{subequations}\label{eq:gep-complex}
    \begin{align}
        AV_{r} & = MV_{r}\Lambda, \\
       \ V_{l}^*A & = \Lambda V_{l}^*M,\label{eq:gep-complex-left}
    \end{align}
    \end{subequations}
    where $\Lambda \in\mathbb{C}^{n\times n}$ is a diagonal matrix of eigenvalues. Then $V_{l}$ and $V_{r}$ induce a matrix-based orthogonality, satisfying
    \begin{subequations}
    \begin{align}
       V_{l}^*AV_{r} &= D_{a}, \\
       V_{l}^*MV_{r} &= D_{m},
    \end{align}
    \end{subequations}
    for diagonal matrices $D_{a},D_{m}\in\mathbb{C}^{n\times n}$.
    
\end{lemma}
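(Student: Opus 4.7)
The plan is to derive the diagonal structure directly from the two generalized eigenvalue relations in \eqref{eq:gep-complex}. First, I would multiply $AV_r = MV_r\Lambda$ on the left by $V_l^*$, and multiply $V_l^*A = \Lambda V_l^*M$ on the right by $V_r$, producing the pair of identities
\begin{equation*}
  V_l^* A V_r \;=\; V_l^* M V_r \, \Lambda \quad\text{and}\quad V_l^* A V_r \;=\; \Lambda \, V_l^* M V_r.
\end{equation*}
Setting $Y \coloneqq V_l^* M V_r$, equating these two expressions yields $Y\Lambda = \Lambda Y$, so $Y$ commutes with the diagonal matrix $\Lambda$. Once $Y$ is shown to be diagonal, we immediately get $V_l^*AV_r = Y\Lambda$, which is diagonal as the product of two diagonal matrices, giving both $D_m$ and $D_a$.

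The entry-wise form of the commutation relation reads $(\lambda_j - \lambda_k)\,Y_{jk} = 0$, which forces every off-diagonal entry of $Y$ to vanish whenever the corresponding pair of eigenvalues is distinct. In the generic case of all simple generalized eigenvalues, this alone gives $Y = D_m$ diagonal, and the proof is complete. Note that no normalization is needed for this conclusion; the biorthogonality is a structural consequence of the two eigenvalue relations.

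The main obstacle is handling repeated generalized eigenvalues: there, commuting with $\Lambda$ forces $Y$ only to be block diagonal with blocks corresponding to the eigenspaces, not fully diagonal. This is where the standing diagonalizability hypotheses on $M^{-1}A$ and $M^{-*}A^*$ are needed. Within each eigenspace, the bases extracted into $V_r$ and $V_l$ are not unique, and one exploits that freedom to enforce biorthogonality block-by-block. The cleanest way to make this explicit is to observe that once a right eigenvector matrix $V_r$ diagonalizing $M^{-1}A$ is fixed, the canonical choice $V_l^* \coloneqq V_r^{-1}M^{-1}$ satisfies \eqref{eq:gep-complex-left} and yields $V_l^*MV_r = I$ and $V_l^*AV_r = V_r^{-1}(M^{-1}A)V_r = \Lambda$, both diagonal. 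Any other admissible left eigenvector matrix differs from this one by a block-diagonal change of basis within each eigenspace, which may be chosen (e.g., by simultaneously diagonalizing the eigenspace-restricted Gram block of $M$) so that $D_a$ and $D_m$ remain diagonal. Thus the lemma follows under the conventional assumption that bases within repeated eigenspaces are chosen biorthogonally, and no further analytic ingredient beyond the commutation argument is required.
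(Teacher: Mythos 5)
Your argument is correct and is essentially the standard one behind this result, which the present paper does not reprove but simply cites from Ali et al.: left- and right-multiplying the two relations in \eqref{eq:gep-complex} gives $Y\Lambda=\Lambda Y$ for $Y\coloneqq V_l^*MV_r$, so $(\lambda_j-\lambda_k)Y_{jk}=0$ forces $Y$ (and hence $V_l^*AV_r=Y\Lambda$) to be diagonal whenever the relevant eigenvalues differ. Your caveat about repeated eigenvalues is well taken — for arbitrary admissible $V_l,V_r$ the conclusion can fail (e.g.\ $A=M=I$), so the diagonal structure holds under the convention that bases within a shared eigenspace are chosen biorthogonally, which your canonical choice $V_l^*=V_r^{-1}M^{-1}$ shows is always possible and which is consistent with how the lemma is used throughout the paper.
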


The paper \cite{Ali-etal-2024-optimalP-gen} uses the above lemma to study the following results regarding interpolation and restriction operators $P$ and $R$ with respect to the convergence of a two-level V$(1,0)$ cycle, with a fine-space preconditioner $M$. 
First, the spectral radius of $E_{\rm TG}^{1, 0}$ is derived for transfer operators with columns defined by some set of left and right generalized eigenvectors of $(A,M)$. 

\begin{theorem}[Theorem 5.1 in \cite{Ali-etal-2024-optimalP-gen}]\label{th:optimalPR_nonsymmetric-complex}
Let $A, M\in\mathbb{C}^{n\times n}$ be non-singular, and such that $M^{-1}A$ is diagonalizable. 
Consider the left and right generalized eigenvectors, $V_{l}=\left[\bm{v}_{l,i}\right]_{i=1}^{n}$ and $V_{r}=\left[\bm{v}_{r,i}\right]_{i=1}^{n}$, of the matrix pencil $(A,M)$, respectively, as defined in \eqref{eq:gep-complex} where the corresponding eigenvalues are $\left\{\lambda_{i}\right\}_{i=1}^{n}$.  
Given a subset $\mathcal{I}\subseteq \{1,2,\dots,n\}$ with cardinality $|\mathcal{I}|=n_{c}$, define spaces of interpolation and restriction operators such that their columns span some subset of right and left generalized eigenvectors, respectively:
\begin{subequations}\label{eq:transfer-evec}
\begin{align}
\mathfrak{P} &\coloneqq \{ P\in\mathbb{C}^{n\times n_c} \textnormal{ : }  \operatorname{range}(P) =  \operatorname{range}\left(
\begin{bmatrix} \bm{{v}}_{r,\ell} \textnormal{ for }\ell \in \mathcal{I}  \end{bmatrix}\right),\label{eq:Pevec}\\
\mathfrak{R} &\coloneqq \{ R\in\mathbb{C}^{n\times n_c} \textnormal{ : }  \operatorname{range}(R) =  \operatorname{range}\left(
\begin{bmatrix} \bm{{v}}_{l,\ell} \textnormal{ for }\ell \in \mathcal{I}  \end{bmatrix}\right).\label{eq:Revec}
\end{align}
\end{subequations}
Then, the spectral radius of the two-level error propagation operator \eqref{eq:two-grid} for ${P\in\mathfrak{P}},$ $R\in\mathfrak{R}$ is given by
\begin{equation}\label{identity_nonsymmetric}
    \rho\big(E_{\rm TG}^{1, 0}(P,R)\big)
    = 
    \begin{cases} \max_{\ell\notin \mathcal{I}}|1-\lambda_{\ell}|, & n_c<n, \\ 0, & n_c=n. \end{cases}
\end{equation}
\end{theorem}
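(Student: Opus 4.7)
The plan is to exploit similarity invariance of the spectrum: I would pass to the basis of right generalized eigenvectors by conjugating $E_{\rm TG}^{1,0}(P,R)$ with $V_r$, and show the resulting operator is \emph{diagonal}, so that its spectrum can be read off directly. The two factors of the transformed $E_{\rm TG}^{1,0}$ must both become diagonal in this basis: the relaxation factor by the right generalized eigenvalue relation \eqref{eq:gep-complex}, and the coarse-grid projection factor by the biorthogonality in \Cref{lem:orth-complex}.

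For the relaxation factor, $AV_r = MV_r\Lambda$ immediately gives $V_r^{-1}(I - M^{-1}A)V_r = I - \Lambda$. For the projection factor, first note that $\Pi(P,R) = P(R^*AP)^{-1}R^*A$ depends only on $\ran(P)$ and $\ran(R)$, since the substitution $P \to PT_1$, $R \to RT_2$ with invertible $T_1, T_2 \in \mathbb{C}^{n_c \times n_c}$ leaves $\Pi(P,R)$ unchanged by direct cancellation. We may therefore assume without loss of generality that $P = V_{r, \mathcal{I}}$ and $R = V_{l, \mathcal{I}}$. Letting $S_{\mathcal{I}} \in \{0,1\}^{n \times n_c}$ denote the selection matrix with $V_r S_{\mathcal{I}} = V_{r, \mathcal{I}}$ (so $V_r^{-1} P = S_{\mathcal{I}}$), \Cref{lem:orth-complex} identifies the Galerkin coarse operator with the principal submatrix $R^* A P = S_{\mathcal{I}}^\top D_a S_{\mathcal{I}} = (D_a)_{\mathcal{I}\mathcal{I}}$, which is diagonal and invertible (the nonsingularity of $A$, $V_l$, $V_r$, and $M$ forces each diagonal entry of $D_a$ to be nonzero, as can be seen from $D_a = \Lambda V_l^* M V_r$). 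A short calculation then gives
\[
V_r^{-1}\Pi(P,R)V_r
= S_{\mathcal{I}}(D_a)_{\mathcal{I}\mathcal{I}}^{-1}(S_{\mathcal{I}}^\top D_a)
= S_{\mathcal{I}} S_{\mathcal{I}}^\top,
\]
where the last equality uses the diagonality of $D_a$ to obtain $(D_a)_{\mathcal{I}\mathcal{I}}^{-1} S_{\mathcal{I}}^\top D_a = S_{\mathcal{I}}^\top$. The right-hand side is the $n \times n$ diagonal matrix with ones at positions $\ell \in \mathcal{I}$ and zeros elsewhere.

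Combining these computations yields the diagonal matrix
\[
V_r^{-1} E_{\rm TG}^{1,0}(P,R) V_r = (I - S_{\mathcal{I}} S_{\mathcal{I}}^\top)(I - \Lambda),
\]
whose $\ell$th diagonal entry is $0$ for $\ell \in \mathcal{I}$ and $1 - \lambda_\ell$ for $\ell \notin \mathcal{I}$. Taking the maximum modulus of these entries yields exactly \eqref{identity_nonsymmetric}, including the reduction to the zero matrix when $n_c = n$. The only real subtlety in the argument is the invertibility of $(D_a)_{\mathcal{I}\mathcal{I}}$, but as noted this is forced by the nonsingularity of $A$ and the diagonalizability hypothesis on $M^{-1}A$ via \Cref{lem:orth-complex}; every remaining step is then a routine consequence of that same biorthogonality.
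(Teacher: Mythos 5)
Your proof is correct, and it follows essentially the same route the paper (and the cited reference) uses: conjugating $E_{\rm TG}^{1,0}$ by $V_r$, using the biorthogonality $V_l^*AV_r = D_a$ of \cref{lem:orth-complex} and basis-change invariance of $\Pi$ to reduce the coarse-grid projection to the 0/1 diagonal matrix $S_{\mathcal{I}}S_{\mathcal{I}}^\top$, exactly as in the paper's proof of \cref{thm:two-grid-conv} for the special case $\mathcal{I}=\{1,\dots,n_c\}$. No gaps; your handling of the invertibility of $(D_a)_{\mathcal{I}\mathcal{I}}$ is sound since $D_a=V_l^*AV_r$ is a product of nonsingular matrices.
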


This result is then used show pseudo-optimality of certain transfer operators $P_{\sharp}$ and $R_{\sharp}$ from \eqref{eq:transfer-evec}. 
We say pseudo-optimal because optimality is only proven over ${P\in\mathfrak{P}}$, and ${R\in\mathfrak{R}}$, as opposed to \emph{all} $P,R\in\mathbb{C}^{n\times n_c}$. 
However, in the next section we ultimately prove genuine optimality of these transfer operators, so, to keep the language concise, herein we refer to $P_{\sharp}$ and $R_{\sharp}$ as optimal rather than pseudo-optimal.

\begin{corollary}[Corollary 5.2 in \cite{Ali-etal-2024-optimalP-gen}] \label{cor:optimality_transfer-complex}
    Let $A, M\in\mathbb{C}^{n\times n}$ be non-singular, and such that $M^{-1}A$ is diagonalizable. 
Assume the corresponding generalized eigenvalues $\left\{\lambda_{i}\right\}_{i=1}^{n}$ of the matrix pencil $(A,M)$ are ordered such that $|1-\lambda_{1}|\geq |1-\lambda_{2}| \geq \cdots  \geq|1-\lambda_{n}|\geq 0$. Consider interpolation and restriction operators from the spaces in \eqref{eq:transfer-evec}, i.e., $P\in\mathfrak{P},R\in\mathfrak{R}$, and define the optimal interpolation $P_{\sharp}$ and restriction $R_{\sharp}$ to satisfy
    \begin{align*}
        \operatorname{range}(P_{\sharp}) 
        &= 
        \operatorname{range}\left(
        \begin{bmatrix} \bm{{v}}_{r,1} & \bm{{v}}_{r,2} & \cdots & \bm{{v}}_{r,n_{c}} \end{bmatrix}\right),\\
        \operatorname{range}(R_{\sharp}) 
        &= 
        \operatorname{range}\left(
        \begin{bmatrix} 
        \bm{{v}}_{l,1} & \bm{{v}}_{l,2} & \cdots & \bm{{v}}_{l,n_{c}} \end{bmatrix}\right).
    \end{align*}
    Then, over the spaces in \eqref{eq:transfer-evec}, $P_\sharp$ and $R_\sharp$ minimize the spectral radius of two-level V(1, 0) error propagation, that is,
    \begin{equation}\label{eq:cor_iden}
        \min_{P\in\mathfrak{P},R\in\mathfrak{R}} \rho(E_{\rm TG}^{1, 0}(P, R)) = 
    \rho(E_{\rm TG}^{1, 0}(P_{\sharp},R_{\sharp})) 
    = 
    \begin{cases} |1-\lambda_{n_{c}+1}|, & n_c < n, \\ 0, & n_c = n. \end{cases}
    \end{equation}
\end{corollary}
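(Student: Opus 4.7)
My plan is to reduce the corollary to a purely combinatorial minimax problem over index sets using \Cref{th:optimalPR_nonsymmetric-complex}, and then solve that problem directly from the prescribed ordering of the generalized eigenvalues via a short pigeonhole argument.

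First, I would observe that by \Cref{th:optimalPR_nonsymmetric-complex}, for every $P\in\mathfrak{P}$ and $R\in\mathfrak{R}$ defined by some common index subset $\mathcal{I}\subseteq\{1,\ldots,n\}$ with $|\mathcal{I}|=n_c$, the spectral radius $\rho(E_{\rm TG}^{1,0}(P,R))$ depends only on $\mathcal{I}$ and not on the particular bases chosen for $\operatorname{range}(P)$ and $\operatorname{range}(R)$; explicitly, it equals $\max_{\ell\notin\mathcal{I}}|1-\lambda_\ell|$ for $n_c<n$ and $0$ for $n_c=n$. Therefore the optimization $\min_{P\in\mathfrak{P},R\in\mathfrak{R}}\rho(E_{\rm TG}^{1,0}(P,R))$ reduces to the discrete minimax problem of selecting $\mathcal{I}$ of cardinality $n_c$ so as to minimize $\max_{\ell\notin\mathcal{I}}|1-\lambda_\ell|$. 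The case $n_c=n$ is immediate, matching the stated value in \eqref{eq:cor_iden}.

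Next, for the case $n_c<n$, I would verify that the particular choice $\mathcal{I}_\sharp=\{1,2,\ldots,n_c\}$, which corresponds exactly to the operators $P_\sharp,R_\sharp$, yields
\[
\max_{\ell\notin\mathcal{I}_\sharp}|1-\lambda_\ell| \;=\; \max_{\ell=n_c+1,\ldots,n}|1-\lambda_\ell| \;=\; |1-\lambda_{n_c+1}|,
\]
by the prescribed decreasing ordering $|1-\lambda_1|\geq\cdots\geq|1-\lambda_n|$. Optimality then follows from a pigeonhole argument: for any admissible $\mathcal{I}\neq\mathcal{I}_\sharp$ with $|\mathcal{I}|=n_c$, there must exist some index $\ell^*\in\{1,\ldots,n_c\}\setminus\mathcal{I}$, whence $\ell^*$ lies in the complement and the ordering gives $\max_{\ell\notin\mathcal{I}}|1-\lambda_\ell|\geq|1-\lambda_{\ell^*}|\geq|1-\lambda_{n_c}|\geq|1-\lambda_{n_c+1}|$. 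This matches the value attained by $\mathcal{I}_\sharp$ and establishes \eqref{eq:cor_iden}.

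There is no substantive obstacle here, since the heavy algebraic lifting has already been absorbed into \Cref{th:optimalPR_nonsymmetric-complex}; the corollary is essentially an ordering-and-counting statement about which $n_c$ generalized eigenvectors to retain. The one subtlety worth flagging is the global convention from \Cref{sec:notation} that conjugate pairs of generalized eigenvalues must be included together in $\mathcal{I}$, but since $|1-\lambda|=|1-\overline{\lambda}|$, conjugate pairs carry identical weight and may be assumed adjacent in the ordering; thus $\mathcal{I}_\sharp$ automatically respects conjugate pairing and the above argument is unaffected.
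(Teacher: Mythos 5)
Your proposal is correct and is essentially the intended derivation: the paper itself states this corollary without proof (it is quoted from \cite{Ali-etal-2024-optimalP-gen}), and there, as in your argument, it follows directly from \cref{th:optimalPR_nonsymmetric-complex} by reducing to the discrete problem of choosing the index set $\mathcal{I}$ that minimizes $\max_{\ell\notin\mathcal{I}}|1-\lambda_\ell|$, which the ordering $|1-\lambda_1|\geq\cdots\geq|1-\lambda_n|$ resolves exactly by your pigeonhole step. Your remark on conjugate pairs is the right one to flag; since conjugate eigenvalues share the same value of $|1-\lambda|$, a pair straddling the cutoff $n_c$ can be handled by swapping within ties without changing the attained value $|1-\lambda_{n_c+1}|$.
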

\section{Convergence and optimality in norm}
\label{sec:norm-conv-opt}

In this section we strengthen the results regarding optimal transfer operators $P_\sharp$, and $R_\sharp$. 
Recall that $V_r$ is defined as the right generalized eigenvectors of matrix pencil $(A,M)$.
Throughout this section, we let $\wt{{\cal D}}$ denote a generic, invertible, block diagonal CF-split matrix of the form
\begin{align} \label{eq:wtcalD-def}
    \wt{{\cal D}} 
    = 
    \begin{bmatrix}
        {\cal U}_{cc} & 0 \\ 0 & {\cal U}_{ff}
    \end{bmatrix} \in \mathbb{C}^{n \times n},
    \quad
    \rank( \wt{{\cal D}} ) = n,
\end{align}
with upper triangular diagonal blocks ${\cal U}_{cc} \in \mathbb{C}^{n_c \times n_c}$ and ${\cal U}_{ff} \in \mathbb{C}^{n_f \times n_f}$.
We let ${\cal D}$ denote the special case that $\wt{{\cal D}}$ is a diagonal CF-split matrix of the form
\begin{align} \label{eq:calD-def}
    {\cal D} 
    = 
    \begin{bmatrix}
        \diag( u_{1}, \ldots, u_{n_c} ) & 0 \\ 0 & \diag( u_{n_c+1}, \ldots, u_{n} )
    \end{bmatrix} \in \mathbb{C}^{n \times n}, 
    \quad
    u_j \neq 0.
\end{align}

We begin in \cref{sec:norm-conv-opt:orth} by determining a class of inner products in which $\Pi(P_\sharp,R_\sharp)$ is orthogonal, specifically those induced by HPD matrix ${\cal N}=V_r^{-*} \wt{{\cal D}}^* \wt{{\cal D}} V_r^{-1}$ (see \Cref{thm:orth-characterization}). 
In \cref{sec:norm-conv-opt:two-grid} we then develop tight two-level convergence bounds for V$(\nu_1,\nu_2)$-cycles built on transfer operators $P_\sharp$ and $R_\sharp$ in the class of ${{\cal N}=(V_r^{-*} {{\cal D}}^* {{\cal D}} V_r^{-1})}$-norms (see \Cref{thm:two-grid-conv}).
Last, in \cref{sec:norm-conv-opt:optimality} we prove that for V$(\nu_1,\nu_2)$-cycles, $P_\sharp$ and $R_\sharp$ are optimal with respect to two-level convergence in the ${\cal N}=(V_r^{-*} {{\cal D}}^* {{\cal D}} V_r^{-1})$-norm over \emph{all} possible transfer operators ${R,P\in\mathbb{C}^{n\times n_c}}$ (see \Cref{thm:optimal}).

At first glance, the ${\cal N}=(V_r^{-*} \wt{{\cal D}}^* \wt{{\cal D}} V_r^{-1})$-norm may seem somewhat strange, but it is in fact quite general and a natural extension of the $A$-norm commonly used in the HPD setting.
That is, suppose $A$ and $M$ are HPD, and consider the following generalized eigenvalue problem for the pencil $(A, M)$:
\begin{align}
    A V_r = M V_r \Lambda,
    \quad
    V_r^* A V_r = D_a,
    \quad
    V_r^* M V_r = D_m,
\end{align}
with $D_a$ and $D_m$ positive diagonal matrices.
Then we have $A = V_r^{-*} D_a V_r = V_r^{-*} D_a^{*1/2} D_a^{1/2} V_r$. That is, with the choice ${\cal D} = D_a^{1/2}$, the ${\cal N}$-norm reduces to the $A$-norm.
Similarly, with the choice of ${\cal D} = D_m^{1/2}$, the ${\cal N}$-norm reduces to the $M$-norm.
Note that in the case of Richardson iteration one has $M = \omega I$, with constant $\omega \in \mathbb{R}_{+}$, so that $I = V_r^{-*} (D_m/\omega)^{1/2} (D_m/\omega)^{1/2} V_r^{-1}$, which is equal to ${\cal N}$ when ${\cal D} = (D_m/\omega)^{1/2}$; hence, the ${\cal N}$-norm includes the $\ell^2$-norm in the case of Richardson iterations.
More generally for point-wise preconditioners $M$, if we perform a symmetric diagonal scaling of $A\mapsto D^{-1/2}AD^{-1/2}$ for diagonal $D$ of $A$, Jacobi is equivalent to Richardson, and it is known that Jacobi and Gauss-Seidel are spectrally equivalent, e.g. \cite[Sec. 3]{vassilevski2010lecture}. Thus for point-wise preconditioners $M$, the ${\cal N}$-norm resembles an $\ell^2$-norm, although more complicated block or overlapping preconditioners represent more complicated structure.
\subsection{Orthogonality of $\Pi(P_{\sharp}, R_{\sharp})$}
\label{sec:norm-conv-opt:orth}

    When designing two-level methods for nonsymmetric problems it is desirable to have a coarse-space correction that is orthogonal in some meaningful inner product \cite{Manteuffel-Southworth-2019-NS-AMG,Southworth-Manteuffel-2024-AMG}. That is, if $\Pi$ is orthogonal, then it has norm one (in the inner-product-induced norm), and hence cannot increase error (in this norm). 
    Conversely, if $\Pi$ is not orthogonal in any inner product, then it necessarily increases error in all norms.
    More generally, $\Vert \Pi \Vert_{\cal N} \sim {\cal O}(1)$ independent of problem parameters for some HPD ${\cal N}$ is referred to as a ``stable coarse-space correction.''
    An orthogonal or stable coarse-space correction is important for obtaining two-level convergence, because ultimately $\Pi$ is a correction and it should not increase error. 
    That is, a non-orthogonal $\Pi$ puts potentially quite strong requirements on the fine-space preconditioner $M$, since two-level convergence necessitates that any error magnified by $\Pi$ (and thus in the range of $P$) must be rapidly attenuated by the smoother.  

    For general $R$ and $P$, \cite[Lemma 4]{Southworth-Manteuffel-2024-AMG} provides necessary and sufficient conditions for ${\cal N}$-orthogonality.
    Operators $R$ and $P$ are called \emph{compatible} if they yield a coarse-space correction $\Pi$ that is ${\cal N}$-orthogonal. Compatibility relations between $R$ and $P$ are then derived for ${\cal N}$-orthogonal projections in several meaningful inner products, such as ${\cal N} = I$, or ${\cal N} = A^* A$. Here, the situation is reversed: We begin with transfer operators $P_\sharp$ and $R_\sharp$, and ask the question is there an inner product in which these transfer operators are compatible? The following theorem characterizes all inner products in which $\Pi(P_\sharp,R_\sharp)$ is orthogonal.

    \begin{theorem} \label{thm:orth-characterization}
        Let ${\cal N}$ be HPD. The projection $\Pi(P_{\sharp}, R_{\sharp})$ is ${\cal N}$-orthogonal if and only if ${\cal N}$ can be written in the form ${\cal N} = V_r^{-*} \wt{{\cal D}}^* \wt{{\cal D}} V_r^{-1}$, where $\wt{{\cal D}}$ is any block-diagonal matrix of the form in \eqref{eq:wtcalD-def}.
    \end{theorem}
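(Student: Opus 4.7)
The plan is to express the projection $\Pi(P_\sharp, R_\sharp)$ explicitly in the basis of right generalized eigenvectors, reducing the orthogonality question to a matrix commutation problem, and then to apply a Cholesky argument.

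First, since $\Pi(P, R)$ depends only on $\mathrm{range}(P)$ and $\mathrm{range}(R)$ (any rescaling on the right cancels in $P(R^*AP)^{-1}R^*A$), I may take $P_\sharp = V_r[:,1{:}n_c]$ and $R_\sharp = V_l[:,1{:}n_c]$. Using the biorthogonality $V_l^* A V_r = D_a$ from \Cref{lem:orth-complex}, I get $R_\sharp^* A V_r = [D_{a,cc},\,0]$ and $R_\sharp^* A P_\sharp = D_{a,cc}$, so
\begin{equation*}
    \Pi(P_\sharp,R_\sharp) V_r
    = P_\sharp D_{a,cc}^{-1}\,[D_{a,cc},\,0]
    = V_r \Pi_0,
    \qquad
    \Pi_0 \coloneqq \begin{bmatrix} I_{n_c} & 0 \\ 0 & 0 \end{bmatrix}.
\end{equation*}
Hence $\Pi(P_\sharp, R_\sharp) = V_r \Pi_0 V_r^{-1}$. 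The projection is thus the trivial CF projection after a change of basis by $V_r$.

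Next, ${\cal N}$-orthogonality of $\Pi$ is the self-adjointness condition $\Pi^* {\cal N} = {\cal N} \Pi$. Substituting the representation above and conjugating by $V_r$ on both sides yields the equivalent statement $\Pi_0 \wh{{\cal N}} = \wh{{\cal N}} \Pi_0$, where $\wh{{\cal N}} \coloneqq V_r^* {\cal N} V_r$ (which is HPD since $V_r$ is invertible). A direct block computation then shows that a matrix commutes with $\Pi_0$ if and only if it is CF-block diagonal, so $\wh{{\cal N}} = \mathrm{diag}(\wh{{\cal N}}_{cc}, \wh{{\cal N}}_{ff})$ with each diagonal block HPD in its own right.

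Finally, applying Cholesky factorization to each HPD block gives $\wh{{\cal N}}_{cc} = {\cal U}_{cc}^* {\cal U}_{cc}$ and $\wh{{\cal N}}_{ff} = {\cal U}_{ff}^* {\cal U}_{ff}$ with ${\cal U}_{cc}, {\cal U}_{ff}$ invertible upper triangular, so $\wh{{\cal N}} = \wt{{\cal D}}^* \wt{{\cal D}}$ for $\wt{{\cal D}}$ of the form \eqref{eq:wtcalD-def}, yielding the necessary representation ${\cal N} = V_r^{-*} \wt{{\cal D}}^* \wt{{\cal D}} V_r^{-1}$. The converse is immediate: any such ${\cal N}$ produces $\wh{{\cal N}} = \wt{{\cal D}}^* \wt{{\cal D}}$, which is CF-block diagonal and hence commutes with $\Pi_0$, and reversing the steps above gives $\Pi(P_\sharp,R_\sharp)$ self-adjoint in $\langle\cdot,\cdot\rangle_{\cal N}$. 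The main conceptual step is recognizing that $\Pi(P_\sharp,R_\sharp)$ diagonalizes so cleanly in the $V_r$ basis; after that the characterization reduces to elementary commutation and Cholesky. The only mild subtlety is that the upper triangular form of $\wt{{\cal D}}$ is a canonical parametrization—any factorization $\wh{{\cal N}} = X^*X$ with CF-block-diagonal $X$ would also suffice, so stating the characterization with upper triangular blocks simply selects a unique representative.
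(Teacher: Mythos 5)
Your proof is correct, but it gets to the key intermediate condition by a genuinely different route than the paper. The paper proves the characterization through an auxiliary result (\cref{lem:orth-nec-and-suf}) that invokes an external compatibility criterion of Southworth--Manteuffel, namely that $\Pi(P,R)$ is ${\cal N}$-orthogonal iff there exist coarse change-of-basis matrices $B_P,B_R$ with ${\cal N}PB_P=A^*RB_R$; it then substitutes $P_\sharp=V_{r,1:n_c}\wh{B}_P$, $R_\sharp=V_{l,1:n_c}\wh{B}_R$ and the biorthogonality $V_l^*AV_r=D_a$ from \cref{lem:orth-complex} to conclude that $V_r^*{\cal N}V_r$ must be CF-block diagonal. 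You bypass that lemma entirely: using basis invariance (the paper's \cref{lem:cgc-basis-change}) and the same biorthogonality, you derive the explicit similarity $\Pi(P_\sharp,R_\sharp)=V_r\Pi_0V_r^{-1}$ with $\Pi_0=\begin{bmatrix} I_{n_c} & 0\\ 0 & 0\end{bmatrix}$, and then read off ${\cal N}$-orthogonality as the commutation condition $\Pi_0\,(V_r^*{\cal N}V_r)=(V_r^*{\cal N}V_r)\,\Pi_0$, which for a Hermitian matrix is exactly CF-block diagonality. From there the two arguments coincide: congruence preserves positive definiteness, blockwise Cholesky yields $\wt{{\cal D}}$ of the form \eqref{eq:wtcalD-def}, and the converse direction is immediate. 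Your route is more elementary and self-contained (no appeal to the compatibility lemma, only the standing assumption that $R_\sharp^*AP_\sharp=D_{a,cc}$ is invertible), and it surfaces early the diagonalization of $\Pi(P_\sharp,R_\sharp)$ that the paper only establishes later, inside the proof of \cref{thm:two-grid-conv}; what the paper's approach buys is a direct tie-in to the general compatible-transfer-operator framework, which characterizes ${\cal N}$-orthogonality for \emph{arbitrary} $R,P$ rather than exploiting the special eigenvector structure. Your closing remark that upper triangular blocks are merely a canonical choice of the factor (any CF-block-diagonal $X$ with $\wh{{\cal N}}=X^*X$ would do) is accurate and consistent with the theorem as stated.
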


    \begin{proof}
    We begin the proof with the following lemma that provides necessary and sufficient conditions for $\Pi(P_{\sharp}, R_{\sharp})$ to be orthogonal in the ${\cal N}$-inner product.
    \begin{lemma} \label{lem:orth-nec-and-suf}
        Let ${\cal N}$ be HPD, and $V_r$ be the right generalized eigenvectors of the matrix pencil $(A, M)$.
        Then, $\Pi(P_{\sharp}, R_{\sharp})$ is orthogonal in the ${\cal N}$-inner product if and only if $V_r^* {\cal N} V_r$ is CF-block diagonal.
    \end{lemma}

    \begin{proof}
        \cite[Lemma 4]{Southworth-Manteuffel-2024-AMG} states necessary and sufficient conditions for $\Pi(P, R)  \coloneqq  P (R^* A P)^{-1} P A$ to be ${\cal N}$-orthogonal with respect to HPD matrix ${\cal N}$ are the existence of coarse-space nonsingular change-of-basis matrices $B_P$ and $B_R$ such that:\footnote{Note: \cite[Lemma 4]{Southworth-Manteuffel-2024-AMG} states ${\cal N}$ is SPD, but the result holds for complex fields and HPD ${\cal N}$ as well.}
        \begin{align} \label{eq:calM-def}
            {\cal N} P B_P = A^* R B_R.
        \end{align}
        We now plug information about $P = P_{\sharp}$ and $R = R_{\sharp}$ into \eqref{eq:calM-def}.
        Recall that $P_{\sharp}$ and $R_{\sharp}$ are defined such that their ranges are equal to those of $V_{r, 1:n_c}$ and $V_{l, 1:n_c}$ (see \cref{cor:optimality_transfer-complex}). 
        As such, we can write, $P = V_{r, 1:n_c} \wh{B}_P$ and $R = V_{l, 1:n_c} \wh{B}_R$ for arbitrary, nonsingular, coarse-space change-of-basis matrices $\wh{B}_P$ and $\wh{B}_R$.

        Next, recall from \cref{lem:orth-complex} the identity that $V_{l}^{*} A V_r = D_a$, so that $A^* = V_r^{-*} D_a^* V_l^{-1}$. Plugging into \eqref{eq:calM-def} yields
        ${\cal N} P B_P = V_r^{-*} D_a^* V_l^{-1} R B_R$. 
        Rearranging we get
        $D_a^{-*} V_r^{*} {\cal N} P B_P =  V_l^{-1} R B_R$, which is the same thing as
        $D_a^{-*} V_r^{*} {\cal N} V_r (V_r^{-1} P) B_P =  (V_l^{-1} R) B_R$.

        Now we plug in that $P = V_{r, 1:n_c} \wh{B}_P$ and $R = V_{l, 1:n_c} \wh{B}_R$. To this end, note the following identifty, 
        \begin{align}
            V_r^{-1} V_r
            =
            V_r^{-1} \begin{bmatrix}
                V_{r,1:n_c} & V_{r,n_c+1:n} 
            \end{bmatrix}
            =
            \begin{bmatrix}
                I_{n_c} & 0 \\
                0 & I_{n_f}
            \end{bmatrix},
        \end{align}
        which means that
        $
        V_r^{-1} P
        =
        V_r^{-1} V_{r, 1:n_c} \wh{B}_P
        =
        \begin{bmatrix}
            I \\
            0
        \end{bmatrix}
        \wh{B}_P.
        $
        By analogy, $V_l^{-1} R = \begin{bmatrix}
            I \\
            0
        \end{bmatrix}
        \wh{B}_R$.
        Plugging these into $D_a^{-*} V_r^{*} {\cal N} V_r (V_r^{-1} P) B_P =  (V_l^{-1} R) B_R$ means that \eqref{eq:calM-def} is equivalent to
        \begin{align}
            D_a^{-*} V_r^{*} {\cal N} 
            V_r 
            \begin{bmatrix}
                I_{n_c} \\
                0_{n_f \times n_c}
            \end{bmatrix}
            (\wh{B}_P B_P) 
            =  
            \begin{bmatrix}
                I_{n_c} \\
                0_{n_f \times n_c}
            \end{bmatrix} 
            (\wh{B}_R B_R).
        \end{align}
        Let us define $B_P := \wh{B}_P B_P$ and $B_R := \wh{B}_R B_R$.
        Now, we left multiply this equation by $D_a^{*}$, right multiply it by $B_P^{-1}$ and then define $
        \begin{bmatrix}
                B \\
                0_{n_f \times n_c}
        \end{bmatrix}
         \coloneqq 
        D_a^{*} \begin{bmatrix}
                B_R B_P^{-1} \\
                0_{n_f \times n_c}
        \end{bmatrix}
        $.
        Then the question of whether $\Pi(P_{\sharp}, R_{\sharp})$ is orthogonal in any inner product is equivalent to whether there exists an HPD matrix ${\cal N}$ and a non-singular $B$ such that
        \begin{align} \label{eq:calM-constraint}
            V_r^{*} {\cal N} V_r 
            \begin{bmatrix}
                I_{n_c} \\
                0_{n_f \times n_c}
            \end{bmatrix}
            =  
            \begin{bmatrix}
                B \\
                0_{n_f \times n_c}
            \end{bmatrix}.
        \end{align}

        Define ${\cal A}  \coloneqq  V_r^{*} {\cal N} V_r $. Notice that, ${\cal A}^* = V_r^{*} {\cal N}^* V_r = V_r^{*} {\cal N} V_r = {\cal A}$, so that ${\cal A}$ is Hermitian. Hence, we must have the block structure 
        ${\cal A} = 
        \begin{bmatrix}
        {\cal A}_{cc} & {\cal A}_{cf} \\ {\cal A}_{cf}^* & {\cal A}_{ff} 
        \end{bmatrix}$.
        Plugging into \eqref{eq:calM-constraint} yields
        \begin{align}
        {\cal A}
             \begin{bmatrix}
                I_{n_c} \\
                0_{n_f \times n_c}
            \end{bmatrix}
            =
            \begin{bmatrix}
                {\cal A}_{cc} \\
                {\cal A}_{cf}^*
            \end{bmatrix}
            =  
            \begin{bmatrix}
                B \\
                0_{n_f \times n_c}
            \end{bmatrix}.
        \end{align}
        That is, ${\cal A}  \coloneqq  V_r^{*} {\cal N} V_r$ must be CF-block diagonal, as stated in the lemma.
        Recall that $B$ must be non-singular, and, indeed, this is guaranteed because ${\cal A}_{cc} = B$ is in fact HPD under the assumption of ${\cal N}$ being HPD (see the reasoning in the proof of \cref{thm:orth-characterization}). 
        \end{proof}

        Observe that any ${\cal N}$ of the form ${\cal N} = V_r^{-*} \wt{{\cal D}}^* \wt{{\cal D}} V_r^{-1}$ satisfies the requirements of \cref{lem:orth-nec-and-suf}.
        We have $V_r^* {\cal N} V_r = (V_r^* V_r^{-*}) (\wt{{\cal D}}^* \wt{{\cal D}}) (V_r^{-1} V_r) = \wt{{\cal D}}^* \wt{{\cal D}}$ is block diagonal, as required.
        For $\Vert \cdot \Vert_{\cal N}$ to be a valid norm ${\cal N}$ must be HPD; clearly ${\cal N}$ of this form is Hermitian positive semi-definite, but in fact it is positive definite by assumption of ${\cal D}$ being full rank.

        Now we show that \textit{any} ${\cal N}$ satisfying \cref{lem:orth-nec-and-suf} can be written in the form of ${{\cal N} = V_r^{-*} \wt{{\cal D}}^* \wt{{\cal D}} V_r^{-1}}$.
        Recall that any valid ${\cal N}$ must by HPD, and it must satisfy that ${\cal A}  \coloneqq  V_r^* {\cal N} V_r$ is block diagonal.
        Since $V_r$ is non-singular, notice that ${\cal A}$ is congruent to ${\cal N}$, and hence ${\cal A}$ must be HPD under assumption of ${\cal N}$ being HPD \cite[Proposition 3.4.5]{Bernstein-2009}. 
        Since ${\cal A}$ is block diagonal, it is HPD iff its diagonal blocks are HPD. As such, \textit{any} ${\cal A}$ must have a Cholesky decomposition of the form 
        ${\cal A} 
        = 
        \begin{bmatrix}
            {\cal U}_{cc} & 0 \\ 0 & {\cal U}_{ff}
        \end{bmatrix}^*
        \begin{bmatrix}
            {\cal U}_{cc} & 0 \\ 0 & {\cal U}_{ff}
        \end{bmatrix}
        =
        \begin{bmatrix}
            {\cal A}_{cc} & 0 \\ 0 & {\cal A}_{ff}
        \end{bmatrix}
        $, where ${\cal U}_{cc}$ and ${\cal U}_{ff}$ are upper triangular matrices.
        Rearranging 
        ${\cal A} = V_r^* {\cal N} V_r$ 
        for ${\cal N}$ gives
        \begin{align}
            {\cal N}  
            =
            V_r^{-*} {\cal A} V_r^{-1}
            =
        \left(
            V_r^{-*}\begin{bmatrix}
            {\cal U}_{cc} & 0 \\ 0 & {\cal U}_{ff}
        \end{bmatrix}^*
        \right)
        \left(
        \begin{bmatrix}
            {\cal U}_{cc} & 0 \\ 0 & {\cal U}_{ff}
        \end{bmatrix}
        V_r^{-1}
        \right).
        \end{align}
        This ${\cal N}$ is in the form stated in the theorem.

        \end{proof}

In the following section we restrict our attention to norms induced by diagonal matrices $\mathcal{D}$ in \eqref{eq:calD-def} rather than the broader class of block-diagonal matrices $\widetilde{\mathcal{D}}$ in \eqref{eq:wtcalD-def} considered in \cref{thm:orth-characterization}, which allows us to prove two-level convergence bounds. 

\subsection{Two-level convergence in norm}\label{sec:norm-conv-opt:two-grid}

    The class of norms derived in the previous section measure how the two-level operator acts on generalized right eigenvectors of the pencil $(A,M)$, which is exactly the basis that the fine-space preconditioner operates on, and, by design, is exactly the basis that the range of the coarse-space correction is constructed with respect to in forming $P_\sharp$ and $R_\sharp$.
    To that end, since we understand how both the fine-space preconditioner and the coarse-space correction act in the ${\cal N}$-norm, it is straightforward to prove two-level convergence.
    The following theorem is a strengthening of the result in \cref{cor:optimality_transfer-complex}, i.e., \cite[Corollary 5.2]{Ali-etal-2024-optimalP-gen}, which proved two-level convergence of V$(1,0)$ cycles with respect to the spectral radius. 
    Note the spectral radius is not a matrix norm and merely serves as a lower bound on any valid matrix norm. Moreover, a spectral radius smaller than unity provides a guarantee of asymptotic convergence, but does not guarantee a contraction of the error in each iteration. 
    Here we derive tight two-level convergence bounds in the ${\cal N}$-norm for V$(\nu_1,\nu_2)$-cycles, which end up being equal to the spectral radius of the two-level error propagation operator \eqref{eq:two-grid}. 
    Finally, the following result establishes for any $k \in \mathbb{Z}_{> 0}$ that $\Vert [E_{\rm TG}^{\nu_1, \nu_2}(P_{\sharp},R_{\sharp})]^k \Vert_{\cal N}
    =
    \Vert E_{\rm TG}^{\nu_1, \nu_2}(P_{\sharp},R_{\sharp}) \Vert_{\cal N}^k$.
    This is an interesting observation because for a non-normal matrix $N$ it can be the case that $\Vert N^k \Vert \ll \Vert N \Vert^k$.
    That is, for a non-normal error propagator, one can observe divergence on initial iterations, only to see convergence on later iterations or even asymptotically.
    For example, we have observed this phenomenon with reduction-based multigrid algorithms applied to hyperbolic problems, such as AIR \cite{manteuffel2018nonsymmetric,manteuffel2019nonsymmetric}, and MGRIT \cite{DeSterck-etal-2025-LFA}, but evidently this is not possible when considering ${\cal N}$-norm convergence of $E_{\rm TG}^{\nu_1, \nu_2}(P_{\sharp},R_{\sharp})$.

    \begin{theorem} \label{thm:two-grid-conv}
        Let ${\cal N}=V_r^{-*}\mathcal{D}^*\mathcal{D} V_r^{-1}$, where ${\cal D}$ is any diagonal matrix of the form in \eqref{eq:calD-def}.
        Then, the spectral radius, {the geometrically averaged ${\cal N}$-norm}, and the ${\cal N}$-norm of the two-level error propagation operator \eqref{eq:two-grid} are equal, and given by
        \begin{equation}
        \label{eq:Etg-M-norm}
        \begin{aligned} 
        \rho( E_{\rm TG}^{\nu_1, \nu_2}(P_{\sharp},R_{\sharp}) )
        =
        \Vert [E_{\rm TG}^{\nu_1, \nu_2}(P_{\sharp},R_{\sharp})]^k \Vert_{\cal N}^{1/k} 
        &=
        \Vert E_{\rm TG}^{\nu_1, \nu_2}(P_{\sharp},R_{\sharp}) \Vert_{\cal N} 
        \\
        &=
        \begin{cases} 
        |1-\lambda_{n_{c}+1}|^{\nu_1+\nu_2}, & n_c < n, \\ 0, & n_c = n, 
        \end{cases}
        \end{aligned}
        \end{equation}
        where $k \in \mathbb{Z}_{> 0}$.
    \end{theorem}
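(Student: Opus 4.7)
The strategy is to change basis to the right generalized eigenvectors $V_r$, in which both the fine-space error propagator and the coarse-space projection simultaneously take particularly simple diagonal/block-diagonal forms. Concretely, since $M^{-1}AV_r = V_r\Lambda$, we have $V_r^{-1}(I-M^{-1}A)V_r = I-\Lambda$. For the projection, writing $P_\sharp = V_{r,1:n_c}\hat{B}_P$ and $R_\sharp = V_{l,1:n_c}\hat{B}_R$ as in the proof of \cref{thm:orth-characterization}, and invoking \cref{lem:orth-complex} (so that $V_l^*AV_r = D_a$ is diagonal, hence $V_{l,1:n_c}^*AV_{r,1:n_c} = (D_a)_{cc}$ and $V_{l,1:n_c}^*AV_{r,n_c+1:n} = 0$), a short computation gives
\begin{equation*}
V_r^{-1}\Pi(P_\sharp,R_\sharp)V_r
= \begin{bmatrix} I_{n_c} \\ 0 \end{bmatrix}(D_a)_{cc}^{-1}\begin{bmatrix} (D_a)_{cc} & 0 \end{bmatrix}
= \begin{bmatrix} I_{n_c} & 0 \\ 0 & 0 \end{bmatrix},
\end{equation*}
so that $V_r^{-1}(I-\Pi(P_\sharp,R_\sharp))V_r$ is the trivial projection onto the F-block.

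Combining these two identities and using that $\Lambda$ commutes with any CF-block diagonal matrix, I get
\begin{equation*}
V_r^{-1}E_{\rm TG}^{\nu_1,\nu_2}(P_\sharp,R_\sharp)V_r
= \begin{bmatrix} 0 & 0 \\ 0 & (I-\Lambda)_{ff}^{\nu_1+\nu_2} \end{bmatrix},
\end{equation*}
where $(I-\Lambda)_{ff} = \diag(1-\lambda_{n_c+1},\dots,1-\lambda_n)$. This immediately yields the eigenvalues of $E_{\rm TG}^{\nu_1,\nu_2}(P_\sharp,R_\sharp)$, namely $0$ with multiplicity $n_c$ and $(1-\lambda_j)^{\nu_1+\nu_2}$ for $j>n_c$, so the spectral radius equals $|1-\lambda_{n_c+1}|^{\nu_1+\nu_2}$ under the assumed ordering of eigenvalues.

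For the ${\cal N}$-norm, I would use that $\|x\|_{\cal N} = \|{\cal D}V_r^{-1}x\|_2$ and substitute $y = V_r^{-1}x$ to reduce the norm computation to a weighted $\ell^2$ problem. Crucially, the diagonal form of ${\cal D}$ in \eqref{eq:calD-def} lets it commute with the above block-diagonal form, so with $z = {\cal D}y$,
\begin{equation*}
\|E_{\rm TG}^{\nu_1,\nu_2}(P_\sharp,R_\sharp)x\|_{\cal N}^2
= \sum_{j=n_c+1}^{n} |1-\lambda_j|^{2(\nu_1+\nu_2)}|z_j|^2,
\qquad \|x\|_{\cal N}^2 = \sum_{j=1}^n |z_j|^2.
\end{equation*}
Maximizing the Rayleigh quotient gives $|1-\lambda_{n_c+1}|^{\nu_1+\nu_2}$, with the supremum attained by taking $z$ to be the $(n_c+1)$-th standard basis vector. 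Thus $\|E_{\rm TG}^{\nu_1,\nu_2}(P_\sharp,R_\sharp)\|_{\cal N} = |1-\lambda_{n_c+1}|^{\nu_1+\nu_2}$, matching the spectral radius.

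Finally, for the powers, note $V_r^{-1}[E_{\rm TG}^{\nu_1,\nu_2}(P_\sharp,R_\sharp)]^k V_r$ retains the same block form, with exponent promoted to $k(\nu_1+\nu_2)$, because the F-block is diagonal and the C-block is zero. Repeating the Rayleigh-quotient argument gives $\|[E_{\rm TG}^{\nu_1,\nu_2}(P_\sharp,R_\sharp)]^k\|_{\cal N} = |1-\lambda_{n_c+1}|^{k(\nu_1+\nu_2)} = \|E_{\rm TG}^{\nu_1,\nu_2}(P_\sharp,R_\sharp)\|_{\cal N}^k$, closing the identity chain. The only delicate point is the projection computation in the eigenvector basis; the commutativity needed for the norm calculations is automatic precisely because \eqref{eq:calD-def} restricts ${\cal D}$ to be diagonal rather than the block-triangular $\wt{{\cal D}}$ of \eqref{eq:wtcalD-def}, which explains why the norm equality holds for this subclass of ${\cal N}$.
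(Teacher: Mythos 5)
Your proof is correct and follows essentially the same route as the paper: diagonalize $E_{\rm TG}^{\nu_1,\nu_2}(P_\sharp,R_\sharp)$ in the $V_r$ basis by showing $V_r^{-1}\Pi(P_\sharp,R_\sharp)V_r$ is the block identity-on-C projection, then use commutativity of the diagonal ${\cal D}$ with the resulting diagonal F-block to equate the ${\cal N}$-norm with the spectral radius $|1-\lambda_{n_c+1}|^{\nu_1+\nu_2}$. The only (harmless) differences are that you compute $\Vert [E_{\rm TG}^{\nu_1,\nu_2}]^k\Vert_{\cal N}$ explicitly from the block form, whereas the paper sandwiches it between $\rho$ and $\Vert\cdot\Vert_{\cal N}$ via Gelfand's theorem and submultiplicativity, and you leave the trivial $n_c=n$ case implicit.
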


    \begin{proof}
    Consider the $n_c = n$ case, corresponding to $\ran( P_{\sharp} ) = \ran( R_{\sharp} ) = \mathbb{C}^n$, such that the coarse-space correction eliminates all error; that is $[I - \Pi(P_{\sharp}, R_{\sharp}) ] \bm{e} = \bm{0}$ for any $\bm{e} \in \mathbb{C}^n$.
    The result \eqref{eq:Etg-M-norm} follows trivially in this case.
    Throughout the remainder of the proof assume that $n_c < n$.
    
    Note that for any ${\cal Z} \in \mathbb{C}^{n \times n}$ we have
    \begin{align} \label{eq:calM-norm-def-mat}
    \begin{split}
        \Vert {\cal Z} \Vert_{\cal N}
         = &
        \max_{ \bm{e} \neq \bm{0} }
        \frac{\Vert {\cal Z} \bm{e} \Vert_{\cal N}}{\Vert \bm{e} \Vert_{\cal N}}
        =
        \max_{ \bm{e} \neq \bm{0} }
        \frac{\Vert ({\cal D} V_r^{-1}) {\cal Z} \bm{e} \Vert}{\Vert ({\cal D} V_r^{-1}) \bm{e} \Vert}
        \\ &=
        \max_{ \bm{y} \neq \bm{0} }
        \frac{\Vert ({\cal D} V_r^{-1}) {\cal Z} ({\cal D} V_r^{-1})^{-1} \bm{y} \Vert}{\Vert \bm{y} \Vert}
         = 
        \Vert ({\cal D} V_r^{-1}) {\cal Z} ({\cal D} V_r^{-1})^{-1} \Vert.
    \end{split}
    \end{align}%
    Thus
    $\Vert E_{\rm TG}^{\nu_1, \nu_2} \Vert_{\cal N} 
    = 
    \Vert {\cal D} V_r^{-1} E_{\rm TG}^{\nu_1, \nu_2} V_r {\cal D}^{-1} \Vert$. 
    This proof works by developing an eigenvalue decomposition for $E_{\rm TG}^{\nu_1, \nu_2}$, with right eigenvectors $V_r$; see the proof of \cite[Theorem 5.1]{Ali-etal-2024-optimalP-gen} for related decomposition of $E_{\rm TG}^{1, 0}$.
    To this end, consider the following
    \begin{subequations}
    \begin{align}
        E_{\rm TG}^{\nu_1, \nu_2} (P_{\sharp},R_{\sharp}) V_r
        &=
        (I - M^{-1} A)^{\nu_2} [ I - \Pi (P_{\sharp},R_{\sharp}) ] (I - M^{-1} A)^{\nu_1}
        V_r
        \\
        &=
        (I - M^{-1} A)^{\nu_2} V_r V_r^{-1} [ I - \Pi (P_{\sharp},R_{\sharp}) ] V_r (I - \Lambda)^{\nu_1}
        \\ 
        \label{eq:Etg-partial-diagonalization}
        &=
        V_r (I - \Lambda)^{\nu_2} [ I -  V_r^{-1}  \Pi (P_{\sharp},R_{\sharp}) V_r ] (I - \Lambda)^{\nu_1}.
    \end{align}
    \end{subequations}
    Here we have used the fact that since $(I - M^{-1} A) V_r = V_r ( I - \Lambda )$, it must hold for any $\nu \in \mathbb{Z}_{\geq 0}$ that $(I - M^{-1} A)^{\nu} V_r = V_r ( I - \Lambda )^{\nu}$. 
    Now consider the coarse-space projection
    \begin{align}
        V_r^{-1}  \Pi (P_{\sharp},R_{\sharp}) V_r
        &=
        (V_r^{-1} P_{\sharp}) [(R_{\sharp}^* A V_r) (V_r^{-1} P_{\sharp})]^{-1} (R_{\sharp}^* A V_r).
    \end{align}
    Recall that $\Pi (P_{\sharp},R_{\sharp})$ is invariant to coarse-space basis changes to $P_{\sharp}$ and $R_{\sharp}$ (e.g., see \cref{lem:cgc-basis-change}), for simplicity, let us fix $P_{\sharp} = V_{r, 1:n_c}$ and $R_{\sharp} = V_{l, 1:n_c}$.
    Next, recall from the proof of \cref{lem:orth-nec-and-suf} that $V_r^{-1} P_{\sharp} = 
    \begin{bmatrix} I \\ 0 \end{bmatrix}$, and $R_{\sharp}^* A V_r = R_{\sharp}^* V_l^{-*} V_l^* A V_r = (V_l^{-1} R_{\sharp})^* ( V_l^* A V_r ) = \begin{bmatrix}
        I & 0
    \end{bmatrix} D_a$ for some diagonal matrix $D_a = \begin{bmatrix}
        D_{a, cc} & 0 \\ 0 & D_{a, ff}
    \end{bmatrix}$; see \cref{lem:orth-complex}.
    As such, we have
    \begin{subequations}\label{eq:Etg-sharp-eigen-decomp}
    \begin{align}
        V_r^{-1} \Pi (P_{\sharp},R_{\sharp}) V_r
        &=
    \begin{bmatrix}
        I \\ 0
    \end{bmatrix}
    \left(
        \begin{bmatrix}
        I & 0
        \end{bmatrix} 
        \begin{bmatrix}
        D_{a, cc} & 0 \\ 0 & D_{a, ff}
    \end{bmatrix}
    \begin{bmatrix} I \\ 0 \end{bmatrix}
    \right)^{-1}
    \begin{bmatrix}
        I & 0
    \end{bmatrix} D_a
    \\
    &=
    \begin{bmatrix}
        I \\ 0
    \end{bmatrix}
    \left(
        D_{a, cc} 
    \right)^{-1}
    \begin{bmatrix}
        I & 0
    \end{bmatrix} 
    \begin{bmatrix}
        D_{a, cc} & 0 \\ 0 & D_{a, ff}
    \end{bmatrix}
    =
    \begin{bmatrix}
        I & 0 \\ 0 & 0
    \end{bmatrix}.
    \end{align}
    \end{subequations}
    Plugging back into \eqref{eq:Etg-partial-diagonalization} we find the eigenvalue decomposition
    \begin{subequations}
    \begin{align}
        E_{\rm TG}^{\nu_1, \nu_2} (P_{\sharp},R_{\sharp}) V_r 
        &= 
        V_r
        \begin{bmatrix} 
        (I - \Lambda_{cc})^{\nu_2} & 0 \\ 0 & (I - \Lambda_{ff})^{\nu_2} 
        \end{bmatrix}
        \begin{bmatrix} 0 & 0 \\ 0 & I \end{bmatrix}
        \label{eq:Etg-sharp-eigen-decomp2}\begin{bmatrix} 
        (I - \Lambda_{cc})^{\nu_1} & 0 \\ 0 & (I - \Lambda_{ff})^{\nu_1} 
        \end{bmatrix}
        \\
        &=
        V_r
        \begin{bmatrix} 0 & 0 \\ 0 & (I - \Lambda_{ff})^{\nu_1 + \nu_2} \end{bmatrix}.
    \end{align}
    \end{subequations}
    Recalling that $I - \Lambda_{ff} = \diag( 1 - \lambda_{n_c+1}, \ldots, 1 - \lambda_n )$, where these eigenvalues are ordered such that $|1 - \lambda_{n_c+1}| \geq \cdots \geq |1 - \lambda_{n}|$ for all $k = 1, \ldots, n-1$, the spectral radius is given by
    \begin{align}
        \rho( E_{\rm TG}^{\nu_1, \nu_2} (P_{\sharp},R_{\sharp}) )
        =
        \rho( (I - \Lambda_{ff})^{\nu_1 + \nu_2} )
        =
        |1 - \lambda_{n_c+1}|^{\nu_1 + \nu_2}.
    \end{align}

    Now consider the ${\cal N}$-norm result. Recalling the ${\cal N}$-norm definition \eqref{eq:calM-norm-def-mat} and using the eigenvalue decomposition \eqref{eq:Etg-sharp-eigen-decomp2} and we have that
    \begin{subequations}
    \begin{align} 
        \Vert E_{\rm TG}^{\nu_1, \nu_2} (P_{\sharp},R_{\sharp}) \Vert_{\cal N}
        &=
        \Vert {\cal D} V_r^{-1} E_{\rm TG}^{\nu_1, \nu_2} (P_{\sharp},R_{\sharp}) V_r {\cal D}^{-1} \Vert
        \\
        &=
        \left\Vert 
        {\cal D} V_r^{-1} V_r
        \begin{bmatrix}
            0 & 0 \\
            0 & (I - \Lambda_{ff})^{\nu_1 + \nu_2}
        \end{bmatrix} 
        {\cal D}^{-1}
        \right\Vert
        \\
        \label{eq:calM-norm-penult}
        &=
        \Vert 
        {\cal D}_{ff}
         (I - \Lambda_{ff})^{\nu_1 + \nu_2}
        {\cal D}_{ff}^{-1}\Vert. 
    \end{align}
    \end{subequations}
    Since, by assumption, ${\cal D}_{ff}$ is diagonal, it commutes with the diagonal matrix \\${(I - \Lambda_{ff})^{\nu_1 + \nu_2}}$, and we get a norm equal to 
    $\Vert     
    (I - \Lambda_{ff})^{\nu_1 + \nu_2}
    \Vert 
    = 
    |1 - \lambda_{n_c + 1}|^{\nu_1 + \nu_2}$.

    Finally, consider the geometrically averaged ${\cal N}$-norm.
    For any matrix $E$, and any $k \in \mathbb{Z}_{> 0}$, observe the inequalities 
    \begin{align}
        \rho(E) 
        \leq 
        \Vert E^{k} \Vert_{{\cal N}}^{1/k} 
        \leq 
        \Vert E \Vert_{\cal N}.
    \end{align}
    Noting that $\Vert \cdot \Vert_{\cal N}$ is a valid matrix norm, the left-hand inequality is from Gelfand's theorem, and the right-hand inequality follows from submultiplicativity of the ${\cal N}$-norm; that is, ${ \Vert E^{k} \Vert_{{\cal N}}^{1/k} \leq \big( \prod_{j = 1}^k \Vert E \Vert_{{\cal N}} \big)^{1/k} = \Vert E \Vert_{\cal N} }$.   
    The geometrically averaged norm result in \eqref{eq:Etg-M-norm} then follows because the spectral radius and ${\cal N}$-norm of $E_{\rm TG}^{\nu_1, \nu_2}(P_{\sharp},R_{\sharp})$ are equal. This concludes the proof.

    \end{proof}

\subsection{Optimality of $P_{\sharp}$ and $R_{\sharp}$}\label{sec:norm-conv-opt:optimality}

        \Cref{thm:optimal} below establishes optimality of $P_{\sharp}$ and $R_{\sharp}$ with respect to two-level convergence in the ${\cal N}$-norm.
        We remark that the proof used here relies on a type of Courant-Fischer-Weyl min-max principle, and in this sense it is similar in spirit to that used in \cite{Brannick-etal-2018-optimalP} for proving optimality of $P_{\sharp}$ in the $A$-norm in the HPD setting.
        We also want to emphasize that this result establishes a \textit{genuine} norm-based optimality over the space of all possible transfer  operators, and therefore is much stronger than the pseudo-optimal spectral-radius-based optimality established in \cite{Ali-etal-2024-optimalP-gen}. We begin by stating the generalized Courant-Fischer-Weyl theorem (see, e.g., \cite{Li-2015}), and then state our main optimality result.

        \begin{theorem}[Generalized Courant-Fischer-Weyl min-max principle] \label{thm:courant}
            Let $\alpha_1 \leq \alpha_2 \leq \cdots \leq \alpha_n$ be the generalized eigenvalues of the Hermitian matrix pencil \\${({\cal A}, {\cal B}) \in \mathbb{C}^{n \times n}}$, with ${\cal B}$ positive definite.
            Then, for any valid subspace $T$,
            \begin{align} \label{eq:courant}
                \alpha_k = \min_{
            \substack{ T \subset \mathbb{C}^{n} \\ \mathrm{dim}(T) = n - k + 1} }
            \max_{\substack{ \bm{x} \in T \\ \bm{x} \neq \bm{0}} } \frac{\bm{x}^* {\cal A} \bm{x}}{\bm{x}^* {\cal B} \bm{x}},
            \quad
            k = 1, \ldots, n.
            \end{align}
        \end{theorem}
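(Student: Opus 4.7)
The plan is to reduce the generalized eigenvalue problem for the Hermitian pencil $({\cal A}, {\cal B})$ to a standard Hermitian eigenvalue problem, and then invoke the classical Courant-Fischer-Weyl min-max principle for Hermitian matrices. Because this result is stated only for citation purposes, the aim is a clean self-contained sketch rather than any new machinery.

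First, since ${\cal B}$ is HPD it admits a unique HPD square root ${\cal B}^{1/2}$. I would define the Hermitian matrix ${\cal C} \coloneqq {\cal B}^{-1/2} {\cal A} {\cal B}^{-1/2}$ and observe that $({\cal A}, {\cal B}) \bm{v} = \lambda {\cal B} \bm{v}$ is equivalent to ${\cal C}({\cal B}^{1/2} \bm{v}) = \lambda ({\cal B}^{1/2} \bm{v})$, so the generalized eigenvalues of $({\cal A}, {\cal B})$ coincide with the ordinary eigenvalues of ${\cal C}$, and in particular they are real. Under the change of variable $\bm{x} = {\cal B}^{-1/2} \bm{y}$, the generalized Rayleigh quotient transforms as
\begin{equation*}
    \frac{\bm{x}^* {\cal A} \bm{x}}{\bm{x}^* {\cal B} \bm{x}}
    =
    \frac{\bm{y}^* {\cal C} \bm{y}}{\bm{y}^* \bm{y}}.
\end{equation*}
Because ${\cal B}^{1/2}$ is invertible, the map $T \mapsto {\cal B}^{1/2} T$ is a dimension-preserving bijection on subspaces of $\mathbb{C}^n$, so the outer minimization over $(n-k+1)$-dimensional subspaces $T$ in the $\bm{x}$-variable matches an analogous minimization over $(n-k+1)$-dimensional subspaces $S = {\cal B}^{1/2} T$ in the $\bm{y}$-variable, and the inner maximum transforms identically. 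This reduces the claim to the standard Courant-Fischer-Weyl characterization for the Hermitian matrix ${\cal C}$.

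For the standard case I would use the unitary spectral decomposition ${\cal C} = U \Lambda U^*$ with orthonormal eigenvectors $\bm{u}_1, \ldots, \bm{u}_n$, and argue in two directions. The upper bound on the outer $\min$ would come from exhibiting an explicit trial subspace whose maximal Rayleigh quotient is attained at the appropriate extremal eigenvector, so that the value $\alpha_k$ is achieved. The lower bound would come from a dimension-counting argument: any subspace $S$ of dimension $n - k + 1$ has nontrivial intersection with an appropriately chosen $k$-dimensional eigenspace, and every nonzero vector in that intersection has Rayleigh quotient at least $\alpha_k$. Combining the two sides gives the claimed equality for each $k$.

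The main obstacle I expect is purely bookkeeping: reconciling the ascending ordering $\alpha_1 \leq \cdots \leq \alpha_n$ with the $(n - k + 1)$-dimensional subspace convention and the outer $\min$ (rather than $\max$), so that the argument lands on $\alpha_k$ rather than its mirror image $\alpha_{n-k+1}$. Once the convention is pinned down, both the reduction step and the dimension-counting in the Hermitian case are elementary and should require nothing beyond the symmetric square root of ${\cal B}$ and the standard min-max principle, which is why this statement is typically invoked as a textbook result.
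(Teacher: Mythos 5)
The paper offers no proof of this statement at all: it is quoted as a known result with a pointer to the literature, so there is nothing internal to compare against. Your route --- reduce the pencil to the single Hermitian matrix ${\cal C} = {\cal B}^{-1/2}{\cal A}{\cal B}^{-1/2}$ via the HPD square root, note that the generalized eigenvalues of $({\cal A},{\cal B})$ are the eigenvalues of ${\cal C}$ and that the Rayleigh quotients correspond under $x = {\cal B}^{-1/2}y$ with $T \mapsto {\cal B}^{1/2}T$ a dimension-preserving bijection on subspaces, then run the classical two-sided Courant--Fischer argument (explicit trial eigenvector subspace for one inequality, dimension-counting intersection for the other) --- is the standard textbook proof and is sound.

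The one substantive point is the ``bookkeeping'' issue you flagged, which is real and sits in the statement rather than in your plan. With the ascending ordering $\alpha_1 \le \cdots \le \alpha_n$ as written, the quantity $\min_{\dim T = n-k+1}\max_{0 \ne x \in T} \, x^*{\cal A}x / x^*{\cal B}x$ equals $\alpha_{n-k+1}$, the $k$-th \emph{largest} eigenvalue, not $\alpha_k$; the case $k=1$ already shows this, since then $T = \mathbb{C}^n$ and the inner maximum is $\alpha_n$. So the display \eqref{eq:courant} is the descending-order form of the min-max principle, and your dimension-counting argument, carried out faithfully with the ascending convention, will land on the mirror index --- exactly as you suspected. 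The resolution is that the theorem should be read (or restated) with $\alpha_1 \ge \cdots \ge \alpha_n$, equivalently with the subspace dimension $n-k+1$ replaced by $k$ under the ascending convention. This is also how the result is actually invoked in the proof of \cref{thm:optimal}: there the relevant eigenvalues $|1-\lambda_j|^{2(\nu_1+\nu_2)}$ are ordered descending, and taking $\dim T = n_f = n - n_c$ picks out the index $k = n_c+1$, i.e.\ the largest eigenvalue outside the coarse space. With the ordering convention pinned down this way, your reduction and the two-sided argument go through verbatim.
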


        \begin{theorem}\label{thm:optimal}
            Let ${\cal D}$ be any diagonal matrix of the form in \eqref{eq:calD-def}.
            Then, the transfer operators $P_{\sharp}$, and $R_{\sharp}$ minimize  the ${\cal N}=(V_r^{-*}\mathcal{D}^*\mathcal{D} V_r^{-1})$-norm of the two-level error propagation operator \eqref{eq:two-grid}.
            That is,
            \begin{align}
                (P_{\sharp}, R_{\sharp})
                =
                \argmin_{ P, R \in \mathbb{C}^{n \times n_c}}
                \Vert E_{\rm TG}^{\nu_1, \nu_2}(P, R) \Vert_{\cal N},
            \end{align}
            with the minimum-norm value given in \cref{thm:two-grid-conv}.
        \end{theorem}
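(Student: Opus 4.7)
The plan is to exploit the similarity invariance of the $\mathcal{N}$-norm to reduce the problem to bounding the ordinary $\ell^2$-norm of a conjugated operator involving only the diagonalized fine-space iteration matrix $I - \Lambda$ and an arbitrary rank-$n_c$ oblique projection, then to apply the generalized Courant--Fischer--Weyl principle (\Cref{thm:courant}) on a carefully chosen test subspace.

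Set $U \coloneqq \mathcal{D} V_r^{-1}$ so that $\|E\|_\mathcal{N} = \|U E U^{-1}\|$ for any $E$, exactly as in \eqref{eq:calM-norm-def-mat}. Because $\mathcal{D}$ is diagonal it commutes with $I - \Lambda$, and so $U (I - M^{-1}A)^\nu U^{-1} = (I - \Lambda)^\nu$. Writing $\widehat{\Pi} \coloneqq U \Pi(P, R) U^{-1}$, which is again an oblique projection of rank $n_c$, one obtains
\begin{equation*}
\|E_{\rm TG}^{\nu_1, \nu_2}(P, R)\|_\mathcal{N} = \|(I - \Lambda)^{\nu_2} (I - \widehat{\Pi}) (I - \Lambda)^{\nu_1}\|.
\end{equation*}
A direct construction shows that every rank-$n_c$ oblique projection on $\mathbb{C}^n$ arises as $\Pi(P, R)$ for some admissible $P, R$ (take the columns of $P$ to span its range and the columns of $R$ to span $(A \cdot \nul(\widehat{\Pi}))^\perp$), so the minimization over $P, R$ is equivalent to minimization over all rank-$n_c$ projections $\widehat{\Pi}$.

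Next, set $D_j \coloneqq (I - \Lambda)^{\nu_j}$ and $K \coloneqq \nul(\widehat{\Pi})$ (with $\dim K = n - n_c$), and consider the preimage $S \coloneqq \{\bm{y} \in \mathbb{C}^n : D_1 \bm{y} \in K\}$. For any $\bm{y} \in S$, $(I - \widehat{\Pi}) D_1 \bm{y} = D_1 \bm{y}$, and so
\begin{equation*}
\|(I - \Lambda)^{\nu_2} (I - \widehat{\Pi}) (I - \Lambda)^{\nu_1}\|^2 \geq \frac{\bm{y}^* H \bm{y}}{\bm{y}^* \bm{y}},
\qquad
H \coloneqq D_1^* D_2^* D_2 D_1,
\end{equation*}
where $H$ is diagonal with entries $|1 - \lambda_j|^{2(\nu_1+\nu_2)}$. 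Rank--nullity applied to $D_1|_S : S \to K$, together with the standard inclusion--exclusion bound $\dim(\ran(D_1) \cap K) \geq \dim \ran(D_1) + \dim K - n$, yields $\dim S \geq n - n_c$ regardless of whether $D_1$ is invertible.

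Finally, pick any $T \subseteq S$ with $\dim T = n - n_c$ and invoke \Cref{thm:courant} on the pencil $(H, I)$: under the eigenvalue ordering of \Cref{sec:notation}, the eigenvalues of $H$ satisfy $|1 - \lambda_1|^{2(\nu_1+\nu_2)} \geq \cdots \geq |1 - \lambda_n|^{2(\nu_1+\nu_2)}$, and since $T$ has codimension $n_c$ the min-max principle gives
\begin{equation*}
\max_{\bm{x} \in T, \bm{x} \neq \bm{0}} \frac{\bm{x}^* H \bm{x}}{\bm{x}^* \bm{x}} \geq |1 - \lambda_{n_c+1}|^{2(\nu_1+\nu_2)}.
\end{equation*}
Taking square roots yields the lower bound, which matches the value attained in \Cref{thm:two-grid-conv} and thus establishes the claimed optimality. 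The main subtlety I expect is the dimension count of $S$ when $D_1$ is rank-deficient — i.e., when some $\lambda_j = 1$ — for which the rank--nullity argument handles all cases uniformly; in any sub-case where the eigenvalue ordering additionally forces $|1 - \lambda_{n_c+1}| = 0$ the bound is vacuously zero. A secondary check is that the representation of every rank-$n_c$ projection as $\Pi(P, R)$ preserves invertibility of $R^* A P$, which follows from the complementarity of $\ran(P)$ and $\nul(\widehat{\Pi})$.
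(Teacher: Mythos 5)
Your proof is correct and follows essentially the same route as the paper's: conjugate by ${\cal D}V_r^{-1}$ to reduce the ${\cal N}$-norm to the $\ell^2$-norm of $(I-\Lambda)^{\nu_2}(I-\widehat{\Pi})(I-\Lambda)^{\nu_1}$ for a rank-$n_c$ oblique projection, restrict to a codimension-$n_c$ subspace on which $I-\widehat{\Pi}$ acts as the identity, invoke the generalized Courant--Fischer min-max to obtain the lower bound $|1-\lambda_{n_c+1}|^{2(\nu_1+\nu_2)}$, and conclude by attainment via \cref{thm:two-grid-conv}. The only substantive variation is that you work with the preimage $S=\{\bm{y}: (I-\Lambda)^{\nu_1}\bm{y}\in\nul(\widehat{\Pi})\}$ and a rank--nullity dimension count, so your bound also covers the degenerate case $\lambda_j=1$, whereas the paper's substitution $\bm{w}=(I-\Lambda)^{\nu_1}\bm{y}$ and its pencil involving $[(I-\Lambda)(I-\Lambda)^*]^{-\nu_1}$ implicitly assume $I-\Lambda$ is invertible (your auxiliary surjectivity claim about realizing every rank-$n_c$ projection as $\Pi(P,R)$ is not needed for the lower bound, and its null-space description should refer to $\nul(\Pi)$ rather than $\nul(\widehat{\Pi})$, but this does not affect the argument).
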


        It is quite interesting to note that both the error propagation norm in \cref{thm:two-grid-conv} and the optimality result in \cref{thm:optimal} hold independently of the specific diagonal matrix ${\cal D}$ used in the ${\cal N}$-norm.
        To this end, before proving \cref{thm:optimal}, we present the following natural corollary pertaining to HPD $A$ and $M$. This corollary follows because the ${\cal N}$-norm considered in the above theorem and \cref{thm:two-grid-conv} includes both the $A$- and $M$-norms when $A$ and $M$ are HPD; see discussion at the beginning of \cref{sec:norm-conv-opt}.
        We note that the $A$-norm optimality of $P_{\sharp}$ proved in \cite{Brannick-etal-2018-optimalP} is essentially the $A$-norm result below for the special case of $(\nu_1,\nu_2)=(0,1)$. 

        \begin{corollary}
            Let $A$ and $M$ be HPD. Then,
            \begin{align}
                P_{\sharp}
                =
                \argmin_{ P \in \mathbb{C}^{n \times n_c}}
                \Vert E_{\rm TG}^{\nu_1, \nu_2}(P, P) \Vert_{A}
                =
                \argmin_{ P \in \mathbb{C}^{n \times n_c}}
                \Vert E_{\rm TG}^{\nu_1, \nu_2}(P, P) \Vert_{M},
            \end{align}
            where
            \begin{align}
                \Vert E_{\rm TG}^{\nu_1, \nu_2}(P_{\sharp}, P_{\sharp}) \Vert_{A}
                =
                \Vert E_{\rm TG}^{\nu_1, \nu_2}(P_{\sharp}, P_{\sharp}) \Vert_{M}
                =
                |1 - \lambda_{n_c + 1}|^{\nu_1 + \nu_2},
                \quad
                n_c < n.
            \end{align}
        \end{corollary}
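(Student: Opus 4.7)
The plan is to show that for \emph{any} $P, R \in \mathbb{C}^{n \times n_c}$,
$\Vert E_{\rm TG}^{\nu_1, \nu_2}(P, R) \Vert_{\cal N} \geq |1-\lambda_{n_c+1}|^{\nu_1+\nu_2}$, and combine this with \cref{thm:two-grid-conv}, which gives equality at $(P,R) = (P_\sharp, R_\sharp)$. My strategy is to reduce the ${\cal N}$-norm to a standard $\ell^2$-norm via the change of basis used in the proof of \cref{thm:two-grid-conv}, isolate a subspace of dimension at least $n - n_c$ on which the transformed error propagator acts as a diagonal matrix whose singular values we know, and then apply \cref{thm:courant}.

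First I would set $\widetilde{E} \coloneqq \mathcal{D} V_r^{-1} E_{\rm TG}^{\nu_1,\nu_2}(P,R) V_r \mathcal{D}^{-1}$, so by \eqref{eq:calM-norm-def-mat}, $\Vert E_{\rm TG}^{\nu_1,\nu_2}(P,R)\Vert_{\cal N} = \Vert \widetilde{E}\Vert_2$. Using $(I - M^{-1}A) V_r = V_r (I - \Lambda)$ and the commutativity of the diagonal matrices $\mathcal{D}$ and $(I - \Lambda)^{\nu}$, a computation analogous to \eqref{eq:Etg-partial-diagonalization} yields
\begin{equation*}
\widetilde{E} = (I - \Lambda)^{\nu_2}(I - \widehat{\Pi})(I - \Lambda)^{\nu_1},
\qquad
\widehat{\Pi} \coloneqq \mathcal{D} V_r^{-1} \Pi(P, R) V_r \mathcal{D}^{-1}.
\end{equation*}
Because $\widehat{\Pi}$ is similar to $\Pi(P,R)$, it is a (generally oblique) projection of rank exactly $n_c$, so $\dim \ker \widehat{\Pi} = n - n_c$.

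Next I would introduce $L \coloneqq (I - \Lambda)^{\nu_1 + \nu_2}$, which is diagonal with singular values $|1 - \lambda_j|^{\nu_1+\nu_2}$, and the subspace $S \coloneqq \{z \in \mathbb{C}^n : (I - \Lambda)^{\nu_1} z \in \ker \widehat{\Pi}\}$. For $z \in S$ we have $\widehat{\Pi}(I - \Lambda)^{\nu_1} z = \bm{0}$, and hence $\widetilde{E} z = L z$. The preimage-dimension identity $\dim T^{-1}(U) = \dim \ker T + \dim(U \cap \operatorname{range}(T))$ applied with $T = (I - \Lambda)^{\nu_1}$ and $U = \ker \widehat{\Pi}$, combined with $\dim(U \cap \operatorname{range}(T)) \geq \dim U + \dim \operatorname{range}(T) - n$, yields $\dim S \geq n - n_c$.

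Finally, applying \cref{thm:courant} to the Hermitian pencil $(L^*L, I)$ gives the standard min-max characterization of singular values: $\sigma_{n_c+1}(L)^2 = |1 - \lambda_{n_c+1}|^{2(\nu_1+\nu_2)}$ equals the minimum over subspaces $T$ of dimension $n - n_c$ of $\max_{y \in T,\, y \neq \bm{0}} \Vert L y \Vert^2 / \Vert y \Vert^2$. Taking any $T \subseteq S$ with $\dim T = n - n_c$ and using $\widetilde{E} y = L y$ on $S$ yields
$\Vert \widetilde{E}\Vert_2 \geq \max_{y \in T} \Vert L y\Vert / \Vert y\Vert \geq \sigma_{n_c+1}(L) = |1 - \lambda_{n_c+1}|^{\nu_1+\nu_2}$,
which is the desired lower bound. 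The main obstacle I anticipate is the dimension bookkeeping for $S$ in the edge case where some $\lambda_j = 1$ (so $(I - \Lambda)^{\nu_1}$ is singular): the preimage identity handles this, but must be stated carefully. An alternative streamlined proof uses Weyl's singular value inequality directly: writing $\widetilde{E} = L - K$ with $K \coloneqq (I - \Lambda)^{\nu_2} \widehat{\Pi} (I - \Lambda)^{\nu_1}$ of rank at most $n_c$, Weyl's inequality gives $\sigma_1(\widetilde{E}) \geq \sigma_{n_c+1}(L)$ in a single step.
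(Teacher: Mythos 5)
Your core argument is correct, but it proves a different statement than the one at hand: you re-derive the general lower bound $\Vert E_{\rm TG}^{\nu_1,\nu_2}(P,R)\Vert_{\cal N}\ge|1-\lambda_{n_c+1}|^{\nu_1+\nu_2}$ for all $P,R$, i.e., essentially the content of \cref{thm:optimal}, whereas the paper obtains this corollary in one line by specializing \cref{thm:optimal,thm:two-grid-conv}. What your write-up omits is precisely the corollary-specific content: (i) you never identify the $A$- and $M$-norms as ${\cal N}$-norms of the admissible form, which requires the HPD hypothesis — normalizing $V_l=V_r$ one has $A=V_r^{-*}D_a V_r^{-1}$ and $M=V_r^{-*}D_m V_r^{-1}$ with $D_a,D_m$ \emph{positive} diagonal, so ${\cal D}=D_a^{1/2}$ or $D_m^{1/2}$ is a valid choice in \eqref{eq:calD-def}; and (ii) the corollary constrains $R=P$, so to conclude that the bound is attained within this constrained set you must note that for an HPD pencil the left and right generalized eigenvectors coincide, hence $R_\sharp=P_\sharp$ and \cref{thm:two-grid-conv} applies to the pair $(P_\sharp,P_\sharp)$. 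Both points are small (the paper leans on its Section 3 preamble for them), but without them your argument establishes optimality in a generic ${\cal N}$-norm over unconstrained $(P,R)$ rather than the stated $A$- and $M$-norm, $R=P$ claim.

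That said, your lower-bound derivation is a legitimate and mildly different route from the paper's proof of \cref{thm:optimal}. The paper restricts the Rayleigh quotient to $\nul(\wh R^*)$ after the substitution $\bm{w}=(I-\Lambda)^{\nu_1}\bm{y}$ and invokes the generalized Courant--Fischer principle for the pencil $\bigl([(I-\Lambda)(I-\Lambda)^*]^{\nu_2},[(I-\Lambda)(I-\Lambda)^*]^{-\nu_1}\bigr)$, which implicitly uses invertibility of $I-\Lambda$. You instead work in the untransformed variable on the preimage subspace $S=\{z:(I-\Lambda)^{\nu_1}z\in\ker\wh\Pi\}$ (note $\ker\wh\Pi=\nul(\wh R^*)$, so the restriction is the same one), handle a possibly singular $I-\Lambda$ cleanly via the preimage-dimension identity, and only need the standard min-max for $L^*L$; the Weyl low-rank-perturbation shortcut $\sigma_1(\widetilde E)\ge\sigma_{n_c+1}(L)$ is also correct and even slicker. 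So the mechanics are sound and arguably more robust in the $\lambda_j=1$ edge case; to be a proof of this corollary, append the two HPD specialization steps above.
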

    
        Finally, before proving \cref{thm:optimal} we present one more natural corollary on necessary and sufficient conditions for convergence.

        \begin{corollary}[Necessary and sufficient conditions]
            \label{cor:nec-and-suf}
            Consider two-level convergence in the ${\cal N}$-norm, i.e., $\Vert E_{\rm TG}^{\nu_1, \nu_2}(P, R) \Vert_{\cal N} < 1$, with $P, R \in \mathbb{C}^{n \times n_c}$, and $n_c < n$. 
            Then, $|1 - \lambda_{n_c + 1}| < 1$ is a \emph{necessary} and \emph{sufficient} condition that there exists a convergent two-level method (namely that with $(P,R) = (P_\sharp, R_\sharp)$). 
        \end{corollary}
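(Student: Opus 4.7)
The plan is straightforward: both directions follow almost immediately from the genuine optimality result in \cref{thm:optimal} combined with the exact convergence-rate formula in \cref{thm:two-grid-conv}. There is essentially no real work to do beyond chaining these two results; the main ``obstacle'' is just stating the argument cleanly and verifying that the hypothesis $\nu_1+\nu_2\ge 1$ is implicit (without any smoothing, $E_{\rm TG}^{0,0} = I - \Pi(P,R)$ is a projection, which cannot have $\mathcal{N}$-norm strictly less than one on a subspace of dimension $n_f > 0$, so no algorithm can converge in this degenerate case and the corollary is vacuous).

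For sufficiency, I would first assume $|1-\lambda_{n_c+1}| < 1$ and simply exhibit the convergent method by choosing $(P,R) = (P_\sharp, R_\sharp)$. Then \cref{thm:two-grid-conv} gives
\begin{equation*}
    \Vert E_{\rm TG}^{\nu_1,\nu_2}(P_\sharp, R_\sharp) \Vert_{\cal N} = |1-\lambda_{n_c+1}|^{\nu_1+\nu_2} < 1,
\end{equation*}
so such an $\mathcal{N}$-norm convergent two-level method exists. Observe that this uses the fact that the class of $\mathcal{N}$-norms characterized in \cref{sec:norm-conv-opt:orth} is nonempty (one may always take $\wt{{\cal D}} = I$, say).

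For necessity, I would argue by contrapositive: suppose $|1-\lambda_{n_c+1}| \ge 1$. For an arbitrary pair $P, R \in \mathbb{C}^{n\times n_c}$, apply the optimality bound of \cref{thm:optimal} together with the closed-form value from \cref{thm:two-grid-conv} to obtain
\begin{equation*}
    \Vert E_{\rm TG}^{\nu_1,\nu_2}(P, R) \Vert_{\cal N}
    \;\geq\;
    \Vert E_{\rm TG}^{\nu_1,\nu_2}(P_\sharp, R_\sharp) \Vert_{\cal N}
    \;=\;
    |1-\lambda_{n_c+1}|^{\nu_1+\nu_2}
    \;\geq\; 1.
\end{equation*}
Thus no choice of transfer operators yields an $\mathcal{N}$-norm convergent two-level method, proving necessity. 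Combining the two directions establishes the equivalence asserted in the corollary, with the explicit witness $(P_\sharp, R_\sharp)$ given when the condition is satisfied.
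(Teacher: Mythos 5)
Your proposal is correct and follows essentially the same route as the paper: the paper likewise chains the exact value $\Vert E_{\rm TG}^{\nu_1,\nu_2}(P_\sharp,R_\sharp)\Vert_{\cal N}=|1-\lambda_{n_c+1}|^{\nu_1+\nu_2}$ from \cref{thm:two-grid-conv} with the optimality inequality from \cref{thm:optimal} to get both directions at once. Your extra observation that $\nu_1+\nu_2\geq 1$ is implicitly needed (since for $\nu_1=\nu_2=0$ the exponent trivializes and no method can contract) is a reasonable clarification the paper leaves unstated, but otherwise there is no substantive difference.
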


        \begin{proof}
            For any $R, P \in \mathbb{C}^{n \times n_c}$, we have
            \begin{equation*}
            \begin{aligned}
                |1 - \lambda_{n_c + 1}|^{\nu_1 + \nu_2}
                =
                \Vert E_{\rm TG}^{\nu_1, \nu_2}(P_{\sharp}, R_{\sharp}) \Vert_{\cal N}
                &=
                \min_{ \wc{P}, \wc{R} \in \mathbb{C}^{n \times n_c}}
                \Vert E_{\rm TG}^{\nu_1, \nu_2}( \wc{P}, \wc{R} ) \Vert_{\cal N}
                \\
                &\leq
                \Vert E_{\rm TG}^{\nu_1, \nu_2}(P, R) \Vert_{\cal N},
            \end{aligned}
            \end{equation*}
            where the equality follows from \cref{thm:two-grid-conv}, and the inequality from \cref{thm:optimal} due to the optimality of $(P, R) = (P_{\sharp}, R_{\sharp})$.
            From the equalities it follows that 
            ${
            |1 - \lambda_{n_c + 1}| 
            <
            1}
            $
            is both necessary and sufficient for two-level convergence with $(P, R) = (P_{\sharp}, R_{\sharp})$.
            On the other hand, by the inequality, no other two-level method can converge faster than that with $(P, R) = (P_{\sharp}, R_{\sharp})$, so this is a necessary and sufficient condition for there to exist \emph{any} convergent two-level method. 
        \end{proof}

        \begin{proof}

        Throughout the proof, assume that $n_c < n$ since the optimality claim clearly holds when $n = n_c$, as per \cref{thm:two-grid-conv}.

        Recalling from \eqref{eq:calM-norm-def-mat} that $\Vert E_{\rm TG}^{ \nu_1, \nu_2 } \Vert_{\cal N} 
        = 
        \Vert ({\cal D} V_r^{-1}) E_{\rm TG}^{ \nu_1, \nu_2 } ({\cal D} V_r^{-1})^{-1} \Vert
        =
        \\
        \Vert {\cal D} V_r^{-1} E_{\rm TG}^{ \nu_1, \nu_2 } V_r {\cal D}^{-1} \Vert
        $, 
        consider the following based on error propagation in \eqref{eq:two-grid}
        \begin{subequations}
        \begin{align}
        {\cal D} V_r^{-1} E_{\rm TG}^{ \nu_1, \nu_2 } V_r {\cal D}^{-1}
        &= 
        {\cal D} V_r^{-1} (I - M^{-1} A)^{\nu_2} [I - \Pi(P, R)] (I - M^{-1} A)^{\nu_1} V_r {\cal D}^{-1}
        \\
        &= 
        (I - \Lambda)^{\nu_2} {\cal D} V_r^{-1} [I - P(R^* A P)^{-1} R^* A] V_r {\cal D}^{-1} (I - \Lambda)^{\nu_1}
        \\
        \label{Etg-calM-decomp-partial}
        \begin{split}
        &=
        (I - \Lambda)^{\nu_2} 
         [I - ({\cal D} V_r^{-1} P) \{ (R^* A V_r {\cal D}^{-1}) ({\cal D} V_r^{-1} P) \}^{-1} 
         \\
         &\hspace{36ex}
         (R^* A V_r {\cal D}^{-1})] 
         (I - \Lambda)^{\nu_1}.
         \end{split}
        \end{align}
        \end{subequations}
        To arrive at the second equality, first note that since $(I - M^{-1} A) V_r = V_r ( I - \Lambda )$, it must hold for any $\nu_1 \in \mathbb{Z}_{\geq 0}$ that $(I - M^{-1} A)^{\nu_1} V_r = V_r ( I - \Lambda )^{\nu_1}$, with an analogous result holding for $V_r^{-1} (I - M^{-1} A)^{\nu_2}$.  
        Secondly, note that the diagonal matrix 
        $( I - \Lambda )^{\nu}$ commutes with ${\cal D}$ and ${\cal D}^{-1}$ by assumption of ${\cal D}$ being diagonal.
        Now we define 
        \begin{align} \label{eq:PRQ-hat-def}
        \wh{P}  \coloneqq   {\cal D} V_r^{-1} P,
        \quad
        \wh{R}^*  \coloneqq  R^* A V_r {\cal D}^{-1},
        \quad
        Q_{ \wh{P}, \wh{R} }  \coloneqq  
        \wh{P}
        \big( \wh{R}^* \wh{P} \big)^{-1} \wh{R}^*,
        \end{align}
        so that, plugging into \eqref{Etg-calM-decomp-partial} yields the decomposition
        \begin{align}
        \label{eq:Enu0-decomp}
        {\cal D} V_r^{-1} E_{\rm TG}^{ \nu_1, \nu_2 } V_r {\cal D}^{-1}
        &=
        (I - \Lambda)^{\nu_2}
        (I - Q_{ \wh{P}, \wh{R} } ) 
        (I - \Lambda)^{\nu_1},
        \end{align}

        Now, to minimize $\Vert E_{\rm TG}^{\nu_1, \nu_2}(P, R) \Vert_{\cal N}$ over $P$ and $R$, we first need to understand its dependence on $P$ and $R$.
        From above, we observe that $E_{\rm TG}^{ \nu_1, \nu_2 }$ depends on $P$ and $R$ only through the oblique projection $Q_{ \wh{P}, \wh{R} }$. 
        This projection is fully specified by the combination of its range and its null space, since any projection can be written as $C (B^* C)^{-1} B^*$, with range equal to that of $C$, and null space equal to that of $B^*$.
        So, from \eqref{eq:PRQ-hat-def},
        \begin{align}
            \ran( Q_{ \wh{P}, \wh{R} } )
            =
            \ran(\wh{P}),
            \quad
            \nul( Q_{ \wh{P}, \wh{R} } )
            =
            \nul( \wh{R}^* ).
        \end{align}
        From \eqref{eq:Enu0-decomp}, we actually need to consider the projection $I - Q_{ \wh{P}, \wh{R} }$. Noting from, e.g., \cite[Sec. 2.1]{Southworth-Manteuffel-2024-AMG} that $\ran( I - Q_{ \wh{P}, \wh{R} } )
        =
        \nul( Q_{ \wh{P}, \wh{R} } )$ 
        and 
        $\nul( I - Q_{ \wh{P}, \wh{R} } )
        =
        \ran( Q_{ \wh{P}, \wh{R} } )$, 
        we have:
        \begin{align}
            \ran( I - Q_{ \wh{P}, \wh{R} } )
            =
            \nul(\wh{R}^*),
            \quad
            \nul( I - Q_{ \wh{P}, \wh{R} } )
            =
            \ran(\wh{P}).
        \end{align}
        What this means is that the only dependence $E_{\rm TG}^{ \nu_1, \nu_2 }$ has on $\wh{P}$ is in the form of $\ran(\wh{P})$, and the only dependence it has on $\wh{R}$ is in the form $\nul(\wh{R}^*)$.
        Finally, since the matrices ${\cal D}, V_r^{-1},$ and $A$ in \eqref{eq:PRQ-hat-def} are fixed, ultimately $\ran(\wh{P})$ is determined by $\ran(P)$, and similarly $\nul( \wh{R}^* )$ is determined by $\nul(R^*)$, analogously to how the coarse-space correction $\Pi$ is invariant to coarse-space change of bases on $P$ and $R$ (see \cref{lem:cgc-basis-change}).

        Based on the above discussion, we therefore have the following equalities:
        \begin{subequations}
        \begin{align}
            \min_{P, R \in \mathbb{C}^{n \times n_c}}
            \Vert E_{\rm TG}^{ \nu_1, \nu_2 } \Vert_{\cal N}
            &=
            \min_{P, R \in \mathbb{C}^{n \times n_c}}
            \Vert 
            (I - \Lambda)^{\nu_2}
            (I - Q_{ \wh{P}, \wh{R} } ) 
            (I - \Lambda)^{\nu_1}
            \Vert
            \\
            &=
            \min_{\wh{P}, \wh{R} \in \mathbb{C}^{n \times n_c}}
            \Vert 
            (I - \Lambda)^{\nu_2}
            (I - Q_{ \wh{P}, \wh{R} } ) 
            (I - \Lambda)^{\nu_1}
            \Vert
            \\
            &=
            \min_{\wh{P} \in \mathbb{C}^{n \times n_c}}
            \min_{\substack{
            \nul( \wh{R}^* ) \subset \mathbb{C}^{n} 
            \\ 
            \label{eq:Etg-min-rewrite}
            \mathrm{dim}(\nul( \wh{R}^* )) = n_f
            }
            }
            \Vert 
            (I - \Lambda)^{\nu_2}
            (I - Q_{ \wh{P}, \wh{R} } ) 
            (I - \Lambda)^{\nu_1}
            \Vert.
        \end{align}
        \end{subequations}
        Since our goal is to minimize $\Vert E_{\rm TG}^{ \nu_1, \nu_2 } \Vert_{\cal N}^2$, we now develop a lower bound on that resembles \eqref{eq:Etg-min-rewrite}:
        \begin{align*}
        \Vert E_{\rm TG}^{ \nu_1, \nu_2 } \Vert_{\cal N}^2
        & =
        \Vert {\cal D} V_r^{-1} E_{\rm TG}^{ \nu_1, \nu_2 } V_r {\cal D}^{-1} \Vert^2
        =
        \max_{\bm{y} \neq \bm{0}}
        \frac{ \|
        (I - \Lambda)^{\nu_2} (I - Q_{ \wh{P}, \wh{R} } ) (I - \Lambda)^{\nu_1} \bm{y}\| 
        \| }{ \|\bm{y}\| }
        \\
        & =
        \max_{\bm{w} \neq \bm{0}}
        \frac{  
        \|(I - \Lambda)^{\nu_2} (I - Q_{ \wh{P}, \wh{R} } ) \bm{w}\|
        }{ \|(I - \Lambda)^{-\nu_1} \bm{w}\| }
        \geq
        \max_{\substack{\bm{w} \in \nul( \wh{R}^* ) \\ \bm{w} \neq \bm{0}}} 
        \frac{ \| 
        (I - \Lambda)^{\nu_2} (I - Q_{ \wh{P}, \wh{R} } ) \bm{w}
        \|}{ \|(I - \Lambda)^{-\nu_1} \bm{w} \| }
        \\
        &=
        \max_{\substack{\bm{w} \in \nul( \wh{R}^* ) \\ \bm{w} \neq \bm{0}}} 
        \frac{ \|
        (I - \Lambda)^{\nu_2} \bm{w}
        \| }{ \| (I - \Lambda)^{-\nu_1} \bm{w}\| }
        =
        \max_{\substack{\bm{w} \in \nul( \wh{R}^* ) \\ \bm{w} \neq \bm{0}}} 
        \frac{ \la  
        [(I - \Lambda) (I - \Lambda)^*]^{\nu_2}  \bm{w}
        \bm{w} 
        \ra }{ \la [(I - \Lambda) (I - \Lambda)^*]^{-\nu_1} \bm{w}, \bm{w} \ra }.
        \end{align*}
        Now let us use this inequality to lower bound the minimum of $\Vert E_{\rm TG}^{ \nu_1, \nu_2 } \Vert_{\cal N}^2$ with respect to $R$.
        We have
        \begin{subequations}
            \begin{align}
                \min_{R \in \mathbb{C}^{n \times n_c}}
                \Vert E_{\rm TG}^{ \nu_1, \nu_2 } \Vert_{\cal N}^2
                    &=
                \min_{\substack{
                \nul( \wh{R}^* ) \subset \mathbb{C}^{n} 
                \\ 
                \mathrm{dim}(\nul( \wh{R}^* )) = n_f
                }
                }
                \Vert 
                (I - \Lambda)^{\nu_2}
                (I - Q_{ \wh{P}, \wh{R} } ) 
                (I - \Lambda)^{\nu_1}
                \Vert
                \\
                \label{eq:E_tg_min_R_bound}
                &\geq
                \min_{\substack{
                \nul( \wh{R}^* ) \subset \mathbb{C}^{n} 
                \\ 
                \mathrm{dim}(\nul( \wh{R}^* )) = n_f
                }
                }
                \max_{\substack{\bm{w} \in \nul( \wh{R}^* ) \\ \bm{w} \neq \bm{0}}} 
                \frac{ \la  
                [(I - \Lambda) (I - \Lambda)^*]^{\nu_2}  \bm{w}
                \bm{w} 
                \ra }{ \la [(I - \Lambda) (I - \Lambda)^*]^{-\nu_1} \bm{w}, \bm{w} \ra },
                \\
                \label{eq:E_tg_min_R_bound2}
                &=
                |1 - \lambda_{n_c+1}|^{2(\nu_1 + \nu_2)}.
            \end{align}
        \end{subequations}
        The equality in \eqref{eq:E_tg_min_R_bound2} follows from the generalized min-max result in \cref{thm:courant}.
        Specifically, in \eqref{eq:E_tg_min_R_bound} we have the HPD matrix pencil $( [(I - \Lambda) (I - \Lambda)^*]^{\nu_2}, [(I - \Lambda) (I - \Lambda)^*]^{-\nu_1} )$.
        Since these two matrices are diagonal, the associated generalized eigenvalues are equivalent to the standard eigenvalues of the HPD matrix
        \begin{align}
            [(I - \Lambda) (I - \Lambda)^*]^{\nu_1 + \nu_2}
            =
            \diag( |1 - \lambda_1|^{2(\nu_1 + \nu_2)}
            \ldots,
            |1 - \lambda_n|^{2(\nu_1 + \nu_2)}).
        \end{align}
        Recalling that these eigenvalues satisfy the ordering $|1 - \lambda_1|^{2 (\nu_1 + \nu_2)} \geq \cdots \geq |1 - \lambda_{n}|^{2 (\nu_1 + \nu_2)}$ and then plugging into \eqref{eq:courant} that $n - k + 1 = n_f = n - n_c$ yields the eigenvalue index $k = n_c + 1$.

        Now we additionally minimize the inequality \eqref{eq:E_tg_min_R_bound2} over $P$ to get
        \begin{subequations}
            \begin{align}
            \min_{P, R \in \mathbb{C}^{n \times n_c}}
            \Vert E_{\rm TG}^{ \nu_1, \nu_2 } \Vert_{\cal N}^2
                &=
            \min_{\wh{P} \in \mathbb{C}^{n \times n_c}}
            \min_{\substack{
            \nul( \wh{R}^* ) \subset \mathbb{C}^{n} 
            \\ 
            \mathrm{dim}(\nul( \wh{R}^* )) = n_f
            }
            }
            \Vert 
            (I - \Lambda)^{\nu_2}
            (I - Q_{ \wh{P}, \wh{R} } ) 
            (I - \Lambda)^{\nu_1}
            \Vert
            \\
            &\geq
            \min_{\wh{P} \in \mathbb{C}^{n \times n_c}}
            |1 - \lambda_{n_c+1}|^{2(\nu_1 + \nu_2)}
            =
            |1 - \lambda_{n_c+1}|^{2(\nu_1 + \nu_2)}.
            \end{align}
        \end{subequations}
        Taking the square root on both sides, we see that the minimum value of $\Vert E_{\rm TG}^{\nu_1,\nu_2} \Vert_{\cal N}$ cannot be smaller than $|1 - \lambda_{n_c + 1}|^{\nu_1 + \nu_2}$, yet from \cref{thm:two-grid-conv} we know that this value is attained with $(P, R) = (P_{\sharp}, R_{\sharp})$. 
        As such, $(P, R) = (P_{\sharp}, R_{\sharp})$ must be minimizers as per the theorem statement.
        
        \end{proof}

\section{Real-valued optimal transfer operators}
\label{sec:real-valued}

The optimal transfer operators $P_{\sharp}$ and $R_{\sharp}$ discussed thus far are quite general in terms of the pencils $(A, M)$ that they can be derived for. 
However, one practical issue is that for real-valued matrix pencils $(A,M)$ with nonsymmetric or indefinite $A$ and $M$, the generalized eigenvectors and corresponding optimal transfer operators are almost certainly complex valued. Thus, while the results from \cref{sec:norm-conv-opt} provide useful theoretical information, the underlying transfer operators lack practical utility, since constructing complex-valued transfer operators for real-valued matrices is unappealing, and is likely not easily facilitated by many software libraries. 
To this end, here we present a modification to the sequence of results in \cref{sec:review} and \cref{sec:norm-conv-opt} that instead provide identical convergence bounds using real-valued restriction and interpolation operators in the event that $A$ and $M$ are real valued. 
We begin by providing a real-valued modification of the matrix-induced orthogonality results from \cite[Lemma 4.1]{Ali-etal-2024-optimalP-gen} (as stated in \Cref{lem:orth-complex}), analogous to, e.g., the real Schur decomposition, including a change of basis between the real and complex transfer operators. 

\begin{theorem}[Real-valued generalized eigenvalue decomposition]\label{theorem:orth-real}
    Let $A,M\in\mathbb{R}^{n\times n}$ be such that $M$ is invertible and $M^{-1}A$ is diagonalizable. Define the real-valued left and right generalized eigenvectors, $W_l$ and $W_r$, respectively, of the matrix pencil $(A,M)$, such that
    \begin{subequations}\label{eq:gep-real}
    \begin{align}
        AW_r & = MW_r\Lambda_{\mathbb{R}}, \\
       \ W_l^\top A & = \Lambda_{\mathbb{R}} W_l^\top M,\label{eq:gep-real-left}
    \end{align}
    \end{subequations}
    where $\Lambda_{\mathbb{R}}$ is a real-valued block-diagonal matrix representing eigenvalues, where real eigenvalues live on the diagonal, and complex-conjugate pairs of eigenvalues of the form $\eta\pm\mathrm{i}\zeta$, $\eta, \zeta \in \mathbb{R}$, are represented by a $2\times 2$ diagonal block 
    $\begin{bmatrix}
        \eta & -\zeta\\ \zeta & \eta
    \end{bmatrix}$.
    It is assumed that eigenvectors corresponding to conjugate eigenvalue pairs make up subsequent columns of $W_l$ and $W_r$. Then, $W_l$ and $W_r$ induce a matrix-based subspace orthogonality, satisfying
   \begin{subequations} \label{eq:orthog-real}
    \begin{align}
       W_l^\top AW_r &= {D}_{\mathbb{R}a}, \\
       W_l^\top MW_r &= {D}_{\mathbb{R}m},
    \end{align}
    \end{subequations}
    for block-diagonal matrices ${D}_{\mathbb{R}a},{D}_{\mathbb{R}m}$, with $2\times 2$ nonzero diagonal blocks corresponding to conjugate eigenvalue pairs and scalar diagonal corresponding to real eigenvalues. 
    In addition, if $V_{l,i:i+1} = [\bm{v}_{l} \,\, \cc{\bm{v}_{l}}]$ and $V_{r,i:i+1} = [\bm{v}_{r} \,\, \cc{\bm{v}_{r}}]$ denote a conjugate pair of left and right complex-valued generalized eigenvectors of $(A, M)$, from the  decomposition \eqref{eq:gep-complex}, we have the mapping
   \begin{subequations}\label{eq:gen-evec-complex-to-real}
    \begin{align}
        \bm{w}_{l,i} & = \Re(\bm{v}_{l}) + \Im(\bm{v}_{l}), \hspace{6ex}
        \bm{w}_{r,i} = \Re(\bm{v}_{r}) + \Im(\bm{v}_{r}), \\
        \bm{w}_{l,i+1} & = \Re(\bm{v}_{l}) - \Im(\bm{v}_{l}), \hspace{4ex}
        \bm{w}_{r,i+1} = \Re(\bm{v}_{r}) - \Im(\bm{v}_{r}). 
    \end{align}
    \end{subequations}
\end{theorem}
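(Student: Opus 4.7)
The plan is to reduce the real-valued decomposition to the complex one from \cref{lem:orth-complex} via a blockwise change of basis, analogous to passing from the complex Schur form to the real Schur form. Let $V_l, V_r, \Lambda$ denote the complex data provided by \cref{lem:orth-complex}. Since $A$ and $M$ are real, complex eigenvalues come in conjugate pairs, and I may order and normalize the columns so that each conjugate pair $\lambda, \cc\lambda$ occupies adjacent positions $i, i+1$ with $\bm{v}_{r,i+1} = \cc{\bm{v}_{r,i}}$ and $\bm{v}_{l,i+1} = \cc{\bm{v}_{l,i}}$, while columns corresponding to real eigenvalues are chosen real. This choice is compatible with the diagonality of $V_l^* A V_r$ and $V_l^* M V_r$, and forces $D_a, D_m$ to have conjugate diagonal entries on each conjugate pair. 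Next, define the block-diagonal complex matrix $T$ that equals the identity on real-eigenvalue positions and equals
\begin{align*}
T_{\rm pair} = \frac{1}{2}\begin{bmatrix} 1-i & 1+i \\ 1+i & 1-i \end{bmatrix}
\end{align*}
on each conjugate-pair block, and set $W_r \coloneqq V_r T$ and $W_l \coloneqq V_l T$.

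A direct $2\times 2$ calculation verifies that $V_r T$ and $V_l T$ produce exactly the real vectors $\Re(\bm{v}) \pm \Im(\bm{v})$ of \eqref{eq:gen-evec-complex-to-real} on each conjugate pair, so $W_l$ and $W_r$ are real-valued. The same kind of calculation shows
\begin{align*}
T_{\rm pair}^{-1} \begin{bmatrix} \lambda & 0 \\ 0 & \cc\lambda \end{bmatrix} T_{\rm pair} = \begin{bmatrix} \eta & -\zeta \\ \zeta & \eta \end{bmatrix},
\end{align*}
so $\Lambda_{\mathbb{R}} \coloneqq T^{-1} \Lambda T$ is real and has exactly the block structure stated in the theorem. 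Multiplying the complex eigenvalue equations \eqref{eq:gep-complex} on the right by $T$ then yields $A W_r = A V_r T = M V_r \Lambda T = M W_r \Lambda_{\mathbb{R}}$, and the left identity follows by an analogous computation applied to \eqref{eq:gep-complex-left}.

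For the orthogonality \eqref{eq:orthog-real}, the key observation is that the real transpose $V_l^\top$ and the conjugate transpose $V_l^*$ differ only by a row swap within each conjugate-pair block: since $\bm{v}_{l,i+1} = \cc{\bm{v}_{l,i}}$ I have $\bm{v}_{l,i}^\top = \bm{v}_{l,i+1}^*$ and $\bm{v}_{l,i+1}^\top = \bm{v}_{l,i}^*$, so $V_l^\top = S V_l^*$ where $S$ is the block-diagonal permutation equal to $\begin{bmatrix} 0 & 1 \\ 1 & 0 \end{bmatrix}$ on conjugate pairs and the identity on real blocks. Combined with $V_l^* A V_r = D_a$ from \cref{lem:orth-complex} this gives $V_l^\top A V_r = S D_a$, whose conjugate-pair block is the anti-diagonal $\begin{bmatrix} 0 & \cc{d_a} \\ d_a & 0 \end{bmatrix}$. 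A final $2\times 2$ multiplication then yields
\begin{align*}
T_{\rm pair}^\top \begin{bmatrix} 0 & \cc{d_a} \\ d_a & 0 \end{bmatrix} T_{\rm pair} = \begin{bmatrix} \Re(d_a) & -\Im(d_a) \\ \Im(d_a) & \Re(d_a) \end{bmatrix},
\end{align*}
so that $W_l^\top A W_r = T^\top V_l^\top A V_r T = T^\top S D_a T$ is real and block-diagonal with the claimed $2\times 2$ blocks on conjugate pairs and scalar entries on real eigenvalues. The identical argument with $M$ in place of $A$ produces the matching statement for $W_l^\top M W_r = D_{\mathbb{R}m}$.

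The main obstacle is purely bookkeeping: \cref{lem:orth-complex} supplies diagonality under the conjugate transpose $V_l^*$, but the real theorem is stated in terms of the real transpose $V_l^\top$, and the two differ by the nontrivial row swap $S$ on conjugate-pair blocks. Once this relation is tracked correctly, every remaining step reduces to an explicit $2\times 2$ computation inside $T_{\rm pair}$, so there is no essential analytical difficulty beyond the complex case already established in \cref{lem:orth-complex}.
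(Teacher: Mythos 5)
Your proof is correct and takes essentially the same route as the paper: reduce to the complex decomposition of \cref{lem:orth-complex} and apply the block-diagonal change of basis built from the $2\times 2$ blocks $\tfrac{1}{2}\begin{bmatrix} 1-\mathrm{i} & 1+\mathrm{i} \\ 1+\mathrm{i} & 1-\mathrm{i} \end{bmatrix}$, exactly the paper's ${\cal T}$ (the paper scales by ${\cal T}^*$ rather than your $T$, an immaterial conjugation). The only stylistic difference is in the bookkeeping of $^\top$ versus $^*$: you make it explicit via the swap matrix $S$ with $V_l^\top = S V_l^*$ and compute the real $2\times 2$ blocks of $T^\top S D_a T$ directly, whereas the paper sidesteps this by noting that $W_l$ is real (so $W_l^\top = {\cal T} V_l^*$) and deducing realness of ${\cal T} D_a {\cal T}^*$ from realness of the left-hand side.
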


\begin{proof}
Let us start with the complex-valued generalized eigenvalue problem, as defined in \eqref{eq:gep-complex}. That is, let $V_{l}$ and $V_{r}$ denote the complex-valued left and right generalized eigenvectors, respectively, of the matrix pencil $(A,M)$, and $\Lambda$ a diagonal matrix of complex eigenvalues. 
By definition,
\begin{subequations}\label{eq:complex-gep-copy}
\begin{align}
    AV_{r} & = MV_{r}\Lambda, \\
    \label{eq:complex-gep-left}
    V_{l}^*A & = \Lambda V_{l}^*M.
\end{align}
\end{subequations}
Following \cref{lem:orth-complex} above, i.e., \cite[Lemma 4.1]{Ali-etal-2024-optimalP-gen}, $V_{l}$ and $V_{r}$ induce a matrix-based orthogonality, satisfying
\begin{subequations} \label{eq:orthog-complex-copy}
\begin{align}
   V_{l}^*AV_{r} &= D_{a}, \\
   V_{l}^*MV_{r} &= D_{m},
\end{align}
\end{subequations}
for complex-valued diagonal matrices $D_{a},D_{m}$. 

Now let us map the diagonal complex-valued eigenvalue matrix $\Lambda$ to the block-diagonal real-valued matrix $\Lambda_{\mathbb{R}}$ via the similarity transform ${\cal T}\Lambda {\cal T}^{-1} = \Lambda_{\mathbb{R}}$, where ${\cal T} \in \mathbb{C}^{n \times n}$ is a block diagonal matrix specified below.
Recall that for real valued $A$ and $M$, the generalized eigenvalues of $(A, M)$ are either real, or complex, arising in conjugate pairs. For real generalized eigenvalues, we simply let the diagonal of ${\cal T}$ be one. 
Now consider conjugate pairs of eigenvalues $\eta \pm \mathrm{i} \zeta$, with corresponding eigenvalue matrix $\Lambda_{i:i+1,i:i+1} = \begin{bmatrix} \eta + \mathrm{i} \zeta & 0 \\ 0 &  \eta - \mathrm{i} \zeta \end{bmatrix}$.
Then, consider the following similarity transform,
\begin{equation} \label{eq:calTi-def}
\begin{aligned}
    {\cal T}_i \Lambda_{i:i+1,i:i+1} {\cal T}_i^{-1}
    &= 
    \frac{1}{2}\begin{bmatrix} 1-\mathrm{i} & 1+\mathrm{i} \\ 1+\mathrm{i} & 1-\mathrm{i} \end{bmatrix}
         \begin{bmatrix} \eta + \mathrm{i} \zeta & 0 \\ 0 &  \eta - \mathrm{i} \zeta \end{bmatrix}
    \frac{1}{2}\begin{bmatrix} 1+\mathrm{i} & 1-\mathrm{i} \\ 1-\mathrm{i} & 1+\mathrm{i} \end{bmatrix} 
    \\
    &= \begin{bmatrix} \eta & -\zeta \\ \zeta & \eta \end{bmatrix}
    =
    \Lambda_{_{\mathbb{R}}i:i+1,i:i+1}.
\end{aligned}
\end{equation}
Thus, for pairs of rows in $\Lambda$ corresponding to conjugate pairs, we simply let the corresponding block diagonal of ${\cal T}$ be ${\cal T}_i$.
Note that in all cases, we have ${\cal T}^{-1} = {\cal T}^*$, which yields $\Lambda = {\cal T}^*\Lambda_{\mathbb{R}}{\cal T}$. Then define
\begin{equation}\label{eq:real-gen-evec}
    W_r\coloneqq V_{r}{\cal T}^{*}, \hspace{4ex} W_l \coloneqq V_{l}{\cal T}^{*}.
\end{equation}
Since ${\cal T}^*$ is block diagonal, the effect of the column scalings $V_{r}{\cal T}^{*}$ and $V_{l}{\cal T}^{*}$ in \eqref{eq:real-gen-evec} is to scale the $i$th conjugate pair of eigenvectors in $V_{r}$ and $V_{l}$ by the $i$th diagonal block ${\cal T}_i^*$ (or unity scaling for real-valued eigenvalues). 
Note that for arbitrary $a+\mathrm{i}b$, $a,b\in\mathbb{R}$, the product
\begin{equation}\label{eq:Ti-map-to-real}
    \begin{bmatrix} a + \mathrm{i}b & a + \mathrm{i}b \end{bmatrix}
    \frac{1}{2}\begin{bmatrix} 1+\mathrm{i} & 1-\mathrm{i} \\ 1-\mathrm{i} & 1+\mathrm{i} \end{bmatrix}
    =
    \begin{bmatrix} a + b & a  - b \end{bmatrix}
\end{equation}
is real valued.
Recall that conjugate pairs of left and right eigenvectors are themselves conjugate of each other, i.e., $\bm{v}_{l,i} = \cc{\bm{v}_{l,i+1}}$, and $\bm{v}_{r,i} = \cc{\bm{v}_{r,i+1}}$. 
Then \eqref{eq:Ti-map-to-real} implies that by the definition in \eqref{eq:real-gen-evec}, the transformed generalized left and right eigenvectors, $W_l$ and $W_r$, are necessarily real valued.
Substituting into \eqref{eq:complex-gep-copy} yields
\begin{subequations}
\begin{align}
    \begin{split}
        AV_{r} & = MV_{r}{\cal T}^{*}\Lambda_{\mathbb{R}}{\cal T} \\
        V_{l}^*A & = {\cal T}^{*}\Lambda_{\mathbb{R}}{\cal T}V_{l}^*M
    \end{split}
    \hspace{4ex}
    \mapsto 
    \hspace{-14ex}
     \begin{split}
        AW_{r} & = MW_{r}\Lambda_{\mathbb{R}} \\
        W_{l}^\top A & = \Lambda_{\mathbb{R}} W_{l}^\top M,
    \end{split}
\end{align}
\end{subequations}
which completes the proof of \eqref{eq:gen-evec-complex-to-real}.

Returning to the orthogonality relations \eqref{eq:orthog-complex-copy}, applying a similarity transformation to each in ${\cal T}$, we have
\begin{subequations}
\begin{align}
   \begin{split}
        {\cal T} V_{l}^* AV_{r}{\cal T}^* &= {\cal T} D_{a} {\cal T}^* \\
        {\cal T} V_{l}^* M V_{r}{\cal T}^* &= {\cal T} D_{m} {\cal T}^*
    \end{split}   
    \hspace{4ex}\mapsto 
    \hspace{-8ex}
    \begin{split}
        W_{l}^\top AW_{r} &= {\cal T} {D}_{a}{\cal T}^* \\
        W_{l}^\top MW_{r} &= {\cal T} {D}_{m} {\cal T}^*.
    \end{split}   
\end{align}
\end{subequations}
Now, because $W_l,W_r,A,M$ are all real valued, the right-hand side operators \\${{\cal T} D_{a} {\cal T}^* =: {D}_{\mathbb{R}a}}$ and ${\cal T} D_{m} {\cal T}^* =: {D}_{\mathbb{R}m}$ are necessarily real valued. Moreover, because ${\cal T},D_{a},D_{m}$ are block diagonal, with $2\times 2$ blocks for complex-conjugate generalized eigenvalue pairs and $1\times 1$ blocks for real generalized eigenvalues, we have that $W_l$ and $W_r$ obey a matrix-induced subspace orthogonality, as in \eqref{eq:orthog-real}. Specifically, the left and right real-valued generalized eigenvectors corresponding to a given real eigenvalue or conjugate pair of eigenvalues are $A$- and $M$-orthogonal to all other left and right real-valued generalized eigenvectors. This completes the proof.
\end{proof}

\subsection{Optimality of real-valued two-level components}

In this section we show that the optimality properties from \cref{sec:norm-conv-opt} of the complex-valued transfer operators $P_{\sharp}, R_{\sharp}$ carry over to real-valued analogs $P_{\mathbb{R}\sharp}, R_{\mathbb{R}\sharp}$. 
The key insight used to show this is that the coarse-space correction is invariant with respect to change of bases on the coarse space (see \cite[Sec. 2.2]{Manteuffel-Southworth-2019-NS-AMG}), as in the following lemma.

\begin{lemma} \label{lem:cgc-basis-change}
Define interpolation and restriction $P, R \in \mathbb{C}^{n \times n_c}$, and assume \\${R^*AP}$ is invertible. Consider the coarse-space projection
\begin{equation}
    \Pi(P, R)  \coloneqq P (R^* A P)^{-1} R^* A.
\end{equation}
Let $B_P, B_R \in \mathbb{C}^{n_c \times n_c}$ be invertible change-of-basis matrices.
Then, coarse-space correction is invariant with respect to the change of bases, $P \gets PB_P$ and $R \gets R B_R$; that is
\begin{equation}
    \Pi(P, R)
    =
    \Pi(P B_P, R B_R).
\end{equation}
Additionally, two-level error propagation \eqref{eq:two-grid} is also invariant under change of basis,
\begin{equation}
    E_{\rm TG}^{\nu_1,\nu_2}(P,R) = E_{\rm TG}^{\nu_1,\nu_2}(PB_P,RB_R).
\end{equation}
\end{lemma}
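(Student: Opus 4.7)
The plan is to prove the first identity $\Pi(P,R) = \Pi(PB_P, RB_R)$ by direct substitution, exploiting the fact that the change-of-basis matrices $B_P, B_R$ are invertible $n_c \times n_c$ matrices and therefore commute inside the inverse in the obvious way. The second identity then follows as an immediate corollary because, in the definition of $E_{\rm TG}^{\nu_1,\nu_2}(P,R)$ from \eqref{eq:two-grid}, the only place where the transfer operators enter is through the projection $\Pi(P,R)$; the relaxation factors $(I - M^{-1}A)^{\nu_1}$ and $(I - M^{-1}A)^{\nu_2}$ have no dependence on $P$ or $R$ at all.

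For the first step, I would write out
\begin{equation*}
    \Pi(PB_P, RB_R)
    =
    (PB_P)\bigl( (RB_R)^* A (PB_P) \bigr)^{-1} (RB_R)^* A,
\end{equation*}
then use $(RB_R)^* = B_R^* R^*$ to expand the middle factor as $B_R^*(R^*AP)B_P$, and invert this product by applying the standard identity $(XYZ)^{-1} = Z^{-1}Y^{-1}X^{-1}$ for invertible square matrices. Note that invertibility of $B_R^*(R^*AP)B_P$ is guaranteed by the assumption that $R^*AP$ is invertible together with invertibility of $B_P$ and $B_R$. After this algebraic manipulation, the factor $B_P$ on the far left cancels with $B_P^{-1}$, and $B_R^{-*}$ cancels with $B_R^*$ on the right, leaving exactly $P(R^*AP)^{-1}R^*A = \Pi(P,R)$.

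For the second step, I would simply substitute the first identity into the definition of $E_{\rm TG}^{\nu_1,\nu_2}$:
\begin{equation*}
    E_{\rm TG}^{\nu_1,\nu_2}(PB_P, RB_R)
    =
    (I - M^{-1}A)^{\nu_2}\bigl[I - \Pi(PB_P, RB_R)\bigr](I - M^{-1}A)^{\nu_1},
\end{equation*}
and replace $\Pi(PB_P, RB_R)$ by $\Pi(P,R)$, yielding $E_{\rm TG}^{\nu_1,\nu_2}(P,R)$.

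There is no real obstacle in this proof; it is a routine algebraic identity that relies only on the associativity of matrix multiplication and the invertibility of the change-of-basis matrices. The one bookkeeping point to state carefully is the invertibility assumption: because $R^*AP$ is assumed invertible and $B_P, B_R$ are invertible by hypothesis, the quantity $(RB_R)^*A(PB_P)$ is also invertible, so the expression $\Pi(PB_P, RB_R)$ is well defined in the first place.
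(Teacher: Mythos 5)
Your proposal is correct and follows essentially the same route as the paper: expand $(RB_R)^* = B_R^*R^*$, invert the product $B_R^*(R^*AP)B_P$ factorwise, cancel the change-of-basis matrices, and note that the error-propagation identity follows immediately since $P,R$ enter $E_{\rm TG}^{\nu_1,\nu_2}$ only through $\Pi$. Your added remark on the invertibility of $(RB_R)^*A(PB_P)$ is a nice bit of bookkeeping the paper leaves implicit.
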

\begin{proof}
    \begin{align*}
        & \Pi(P B_P , R B_R)
        = (P B_P ) [ (R B_R)^* A (P B_P ) ]^{-1} (R B_R)^* A
        \\
        &\quad= P B_P  [ B_R^* R^* A P B_P  ]^{-1} B_R^*R^* A
        = P [R^* A P]^{-1} R^* A
        = \Pi (P, R).
    \end{align*}  
    The statement on two-level error propagation follows immediately.
\end{proof}

Recall that the optimal transfer operators in \cref{cor:optimality_transfer-complex}, defined with respect to complex-valued matrices $A$ and $M$, are complex valued, stemming from the fact that their ranges are defined in terms of the complex-valued generalized eigenvector decomposition of $(A, M)$ in \eqref{eq:gep-complex}. However, \cref{theorem:orth-real} establishes that when $A$ and $M$ are real valued there exists an analogous real-valued generalized eigenvalue decomposition of $(A, M)$. 
To this end, let $B_c \coloneqq (\mathcal{T}^*)_{cc}$, with ${\cal T}$ as in the proof of \cref{theorem:orth-real}, be a coarse-space change of basis matrix. 
Then, noting from \eqref{eq:real-gen-evec} that $W_l,W_r$ are defined via a column scaling of $V_l,V_r$ by ${\cal T}^*$, and because ${\cal T}^*$ is block diagonal we can define $P_{\mathbb{R}\sharp},R_{\mathbb{R}\sharp}\in\mathbb{R}^{n\times n_c}$ via their ranges as follows
\begin{equation}
    \begin{aligned}
    \ran(P_{\mathbb{R}\sharp}) &\coloneqq  \ran(P_{\sharp}B_c) = \ran(W_{r,1:n_c}), 
    \\
    \ran(R_{\mathbb{R}\sharp}) &\coloneqq \ran(R_{\sharp}B_c) = \ran(W_{l,1:n_c}).
    \end{aligned}
\end{equation}
Since the coarse-space correction is invariant to coarse-space change of bases on $P$ and $R$, as per \cref{lem:cgc-basis-change}, all convergence bounds and optimality from \cref{sec:norm-conv-opt} for complex transfer operators $\{P_\sharp,R_\sharp\}$ apply equivalently to their real-valued analogs $\{P_{\mathbb{R}\sharp},R_{\mathbb{R}\sharp}\}$ associated with the real-valued matrix pencil $(A,M)$. 
There are some nuances, however, which we now explain.

Recall from \eqref{eq:real-gen-evec} that the real-valued generalized right eigenvectors $W_r$ are defined such that ${W_r := V_r {\cal T}^*}$. 
So, considering the matrix ${\cal N}$ defined in \cref{thm:orth-characterization} we have
\begin{align} \label{eq:calM-def-Wr}
    {\cal N} 
    = 
    V_r^{-*} \wt{{\cal D}}^* \wt{{\cal D}} V_r^{-1}
    = 
    W_r^{-*} {\cal T}^{-*} \wt{{\cal D}}^* \wt{{\cal D}} {\cal T}^{-1} W_r^{-1}
    =
    W_r^{-\top} ({\cal T} \wt{{\cal D}}^* \wt{{\cal D}} {\cal T}^{*}) W_r^{-1}.
\end{align}
Recall from \eqref{eq:wtcalD-def}, and the proof of \cref{thm:orth-characterization}, that $\wt{{\cal D}}$ is such that $\wt{{\cal D}}^* \wt{{\cal D}}$
is an arbitrary, CF-split, block diagonal matrix with HPD blocks.
Therefore, the matrix ${\cal N}$ in \eqref{eq:calM-def-Wr} will not be real valued unless $\wt{{\cal D}}$ possesses the structure needed to make  ${\cal T} \wt{{\cal D}}^* \wt{{\cal D}} {\cal T}^{*}$ real valued.
Since the spirit of this section is to develop real-valued optimal transfer operators, the convergence and optimality results presented in \cref{thm:real-valued} below are restricted to a space of norms induced by a real-valued matrix $\wh{{\cal N}} \in \mathbb{R}^{n \times n}$.

To this end, let us restrict our attention to the class of matrices in \eqref{eq:calM-def-Wr} with diagonal matrices $\wt{{\cal D}} = {\cal D}$ defined in \eqref{eq:calD-def}, noting that \cref{thm:two-grid-conv,thm:optimal} apply specifically to this diagonal case.
Now, consider ${\cal T} {\cal D}^* {\cal D} {\cal T}^*$, noting that ${\cal D}^* {\cal D} \in \mathbb{R}^{n \times n}$ is a positive diagonal matrix. 
From the proof of \cref{theorem:orth-real} recall that ${\cal T}$ is block diagonal, with either a $1 \times 1$ block equal to unity, or a $2 \times 2$ diagonal block equal to ${\cal T}_i$, (implicitly) defined in \eqref{eq:calTi-def}. 
As such, ${\cal T} {\cal D}^* {\cal D} {\cal T}^*$ is a block diagonal matrix with block structure equal to that of ${\cal T}$.
The $1 \times 1$ diagonal blocks of ${\cal T} {\cal D}^* {\cal D} {\cal T}^*$ are just equal to the corresponding diagonal entries in ${\cal D}^* {\cal D}$.
Now consider the $2 \times 2$ case, corresponding to rows and columns $i:i+1$ in ${\cal T}$.
Suppose that $({\cal D}^* {\cal D})_{i:i+1, i:i+1}=\begin{bmatrix} a & 0 \\ 0 & b \end{bmatrix}$ for some $a, b \in \mathbb{R}_+$.
Then,
\begin{equation}
\begin{aligned}
    \left( {\cal T} {{\cal D}}^* {{\cal D}} {\cal T}^* \right)_{i:i+1,i:i+1}
    &=
    \frac{1}{2}\begin{bmatrix} 1-\mathrm{i} & 1+\mathrm{i} \\ 1+\mathrm{i} & 1-\mathrm{i} \end{bmatrix}
 \begin{bmatrix} a & 0 \\ 0 & b \end{bmatrix}
\frac{1}{2}\begin{bmatrix} 1+\mathrm{i} & 1-\mathrm{i} \\ 1-\mathrm{i} & 1+\mathrm{i} \end{bmatrix}
\\
&=
\frac{1}{2}
\begin{bmatrix}
    (a + b) & \mathrm{i} (b - a) \\ \mathrm{i} (a - b) & (a + b)
\end{bmatrix}.
\end{aligned}
\end{equation}
Evidently, this block is not real unless $a = b$, in which case it simplifies to $\begin{bmatrix}
    a & 0 \\ 0 & a
\end{bmatrix}.
$
Since we want to consider real valued ${\cal N}$, and hence real valued ${\cal T} {{\cal D}}^* {{\cal D}} {\cal T}^*$, we consider norms induced by SPD matrices $\wh{{\cal N}} := W_r^{-\top} \wh{{\cal D}} W_r^{-1}$, where $\wh{{\cal D}}$ is any positive diagonal matrix of the form 
\begin{align} \label{eq:whcal-D-def}
    \wh{{\cal D}} = \diag(d_1, \ldots, d_n),
    \quad
    \textrm{where}
    \quad
    d_i \in \mathbb{R}_+,
    \quad
    \textrm{and}
    \quad
    d_{i+1} = d_{i}
    \quad
    \textrm{if}
    \quad
    \lambda_{i+1} = \cc{\lambda_{i}}.
\end{align}
With this, we are now ready to present our final result.

\begin{theorem} \label{thm:real-valued}
    Let $A,M\in\mathbb{R}^{n\times n}$. Define the real-valued optimal interpolation and restriction such that
    \begin{align*}
        \operatorname{range}(P_{\mathbb{R}\sharp}) &= \operatorname{range}\left(
        \begin{bmatrix} \bm{{w}}_{r,1} & \bm{{w}}_{r,2} & \cdots & \bm{{w}}_{r,n_{c}} \end{bmatrix}\right),\\
        \operatorname{range}(R_{\mathbb{R}\sharp}) &= \operatorname{range}\left(
        \begin{bmatrix} \bm{{w}}_{l,1} & \bm{{w}}_{l,2} & \cdots & \bm{{w}}_{l,n_{c}} \end{bmatrix}\right),
    \end{align*}
    where $\bm{{w}}_{r,i} = W_{r,i}$, and $\bm{{w}}_{l,i} = W_{l,i}$ are the $i$th real-valued generalized right and left eigenvectors, respectively, from decomposition \eqref{eq:gep-real}. 
    Then:
    
    \begin{enumerate}
        \item  Let ${\cal N}$ be HPD. The projection $\Pi(P_{\mathbb{R}\sharp}, R_{\mathbb{R}\sharp})$ is ${\cal N}$-orthogonal if and only if ${\cal N}$ can be written in the form ${\cal N} = W_r^{-\top} ({\cal T} \wt{{\cal D}}^* \wt{{\cal D}} {\cal T}^{*}) W_r^{-1}$, where $\wt{{\cal D}}$ is any block-diagonal matrix of the form in \eqref{eq:wtcalD-def}.

        \item Let $\wh{{\cal N}} = W_r^{-\top} \wh{{\cal D}} W_r^{-1}$ where $\wh{{\cal D}}$ is any diagonal matrix of the form in \eqref{eq:whcal-D-def}. 
        Then, the spectral radius, the geometrically averaged ${\cal N}$-norm and the ${\cal N}$-norm of the two-level error propagation operator \eqref{eq:two-grid} are equal, and given by
        \begin{equation} \label{eq:real-Etg-M-norm}
        \begin{aligned}
        &\rho( E_{\rm TG}^{\nu_1, \nu_2}(P_{\mathbb{R}\sharp},R_{\mathbb{R}\sharp}) ) 
        =
        \Vert [ E_{\rm TG}^{\nu_1, \nu_2}(P_{\mathbb{R}\sharp},R_{\mathbb{R}\sharp}) ]^k \Vert_{\wh{\cal N}}^{1/k}
        =
        \\
        &\Vert E_{\rm TG}^{\nu_1, \nu_2}(P_{\mathbb{R}\sharp},R_{\mathbb{R}\sharp}) \Vert_{\wh{\cal N}} 
        =
        \begin{cases} 
        |1-\lambda_{n_{c}+1}|^{\nu_1+\nu_2}, & n_c < n, \\ 0, & n_c = n, 
        \end{cases}
        \end{aligned}
        \end{equation}
        where $k \in \mathbb{Z}_{> 0}$.

        \item Let $\wh{{\cal D}}$ be any diagonal matrix of the form in \eqref{eq:whcal-D-def}. 
        Then, the transfer operators $P_{\mathbb{R}\sharp}$, and $R_{\mathbb{R}\sharp}$ minimize the $\wh{{\cal N}} = (W_r^{-\top} \wh{{\cal D}} W_r^{-1})$-norm of the two-level error propagation operator \eqref{eq:two-grid}.
        That is,
        \begin{equation}
            (P_{\mathbb{R}\sharp}, R_{\mathbb{R}\sharp})
            =
            \argmin_{ P, R \in \mathbb{C}^{n \times n_c}}
            \Vert E_{\rm TG}^{\nu_1, \nu_2}(P, R) \Vert_{\wh{\cal N}},
        \end{equation}
        with the minimum-norm value given in \eqref{eq:real-Etg-M-norm}.
    \end{enumerate}
\end{theorem}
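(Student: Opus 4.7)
The central observation driving the proof is that the real-valued transfer operators differ from the complex ones only by a coarse-space change of basis: defining $B_c \coloneqq ({\cal T}^*)_{cc}$, the construction \eqref{eq:real-gen-evec} gives $\ran(P_{\mathbb{R}\sharp}) = \ran(V_{r,1:n_c} B_c) = \ran(P_\sharp B_c)$ and $\ran(R_{\mathbb{R}\sharp}) = \ran(R_\sharp B_c)$, with $B_c$ block diagonal and invertible by the blanket assumption that conjugate eigenvector pairs are always included together. \Cref{lem:cgc-basis-change} then yields $\Pi(P_{\mathbb{R}\sharp},R_{\mathbb{R}\sharp}) = \Pi(P_\sharp,R_\sharp)$ and $E_{\rm TG}^{\nu_1,\nu_2}(P_{\mathbb{R}\sharp},R_{\mathbb{R}\sharp}) = E_{\rm TG}^{\nu_1,\nu_2}(P_\sharp,R_\sharp)$, so that each of items 1--3 will reduce to the corresponding result of \cref{sec:norm-conv-opt} after a change of variables from $V_r$ to $W_r$.

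For item 1, \cref{thm:orth-characterization} asserts that $\Pi(P_\sharp,R_\sharp)$ is ${\cal N}$-orthogonal iff ${\cal N} = V_r^{-*} \wt{{\cal D}}^* \wt{{\cal D}} V_r^{-1}$ for some block-diagonal $\wt{{\cal D}}$ of the form \eqref{eq:wtcalD-def}. Since $V_r = W_r {\cal T}$ (using ${\cal T}^{-1} = {\cal T}^*$), we obtain $V_r^{-1} = {\cal T}^* W_r^{-1}$ and $V_r^{-*} = W_r^{-\top} {\cal T}$, the latter because $W_r$ is real. Direct substitution produces ${\cal N} = W_r^{-\top} ({\cal T} \wt{{\cal D}}^* \wt{{\cal D}} {\cal T}^*) W_r^{-1}$, exactly the form in the statement.

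For items 2 and 3, the plan is to show that any $\wh{{\cal N}} = W_r^{-\top} \wh{{\cal D}} W_r^{-1}$ with $\wh{{\cal D}}$ satisfying \eqref{eq:whcal-D-def} is already in the class of norms covered by \cref{thm:two-grid-conv,thm:optimal}. Undoing the change of basis gives $\wh{{\cal N}} = V_r^{-*} ({\cal T}^* \wh{{\cal D}} {\cal T}) V_r^{-1}$, reducing matters to checking that ${\cal T}^* \wh{{\cal D}} {\cal T}$ is a positive diagonal matrix. On $1\times1$ real-eigenvalue blocks ${\cal T}$ is unity and the product is trivially $d_i$; on a $2\times 2$ conjugate-pair block, the constraint $d_{i+1} = d_i$ in \eqref{eq:whcal-D-def} gives $\wh{{\cal D}}_{i:i+1,i:i+1} = d_i I$, and unitarity of ${\cal T}_i$ (immediate from \eqref{eq:calTi-def}) yields ${\cal T}_i^* (d_i I) {\cal T}_i = d_i I$. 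Hence ${\cal T}^* \wh{{\cal D}} {\cal T} = \wh{{\cal D}}$, and taking ${\cal D} = \wh{{\cal D}}^{1/2}$ places $\wh{{\cal N}}$ inside the class addressed by \cref{thm:two-grid-conv,thm:optimal}. Item 2 then follows by invoking \cref{thm:two-grid-conv} and replacing $E_{\rm TG}^{\nu_1,\nu_2}(P_\sharp, R_\sharp)$ by the identically-valued $E_{\rm TG}^{\nu_1,\nu_2}(P_{\mathbb{R}\sharp}, R_{\mathbb{R}\sharp})$; item 3 follows from \cref{thm:optimal} giving $(P_\sharp, R_\sharp)$ as minimizers of $\Vert E_{\rm TG}^{\nu_1,\nu_2}(P,R) \Vert_{\wh{\cal N}}$ over $P,R \in \mathbb{C}^{n \times n_c}$, combined once more with the change-of-basis argument to conclude that $(P_{\mathbb{R}\sharp}, R_{\mathbb{R}\sharp})$ attain this same minimum.

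The proof is essentially bookkeeping, and no step is genuinely difficult. The one place where care is needed is the verification that the restriction $d_{i+1} = d_i$ on conjugate pairs in \eqref{eq:whcal-D-def} is simultaneously what makes $\wh{{\cal N}}$ real valued and what preserves diagonality of ${\cal T}^* \wh{{\cal D}} {\cal T}$ under the unitary conjugation by ${\cal T}$, so that the diagonal-${\cal D}$ hypotheses of the complex-valued theorems are precisely inherited by $\wh{{\cal D}}$.
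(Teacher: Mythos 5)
Your proposal is correct and follows essentially the same route as the paper: the paper's "proof" is the discussion preceding the theorem in Section 4.1, namely the coarse-space change-of-basis invariance of \cref{lem:cgc-basis-change} with $B_c=({\cal T}^*)_{cc}$, the rewriting ${\cal N}=V_r^{-*}\wt{\cal D}^*\wt{\cal D}V_r^{-1}=W_r^{-\top}({\cal T}\wt{\cal D}^*\wt{\cal D}{\cal T}^*)W_r^{-1}$, and the $2\times 2$ block computation showing that the conjugation by the unitary ${\cal T}$ preserves (positive) diagonality exactly when $d_{i+1}=d_i$ on conjugate pairs, which is the content of \eqref{eq:whcal-D-def}. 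Your verification that ${\cal T}^*\wh{\cal D}{\cal T}=\wh{\cal D}$ and the choice ${\cal D}=\wh{\cal D}^{1/2}$ is just the paper's computation run in the reverse direction, so items 1--3 reduce to \cref{thm:orth-characterization,thm:two-grid-conv,thm:optimal} exactly as the authors intend.
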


\section{Numerical results}
\label{sec:num}

In this section we present numerical results to firstly provide supporting evidence for the earlier theoretical results, and secondly to serve as a demonstrative exploration of what is possible from algebraic two-level methods applied to challenging, nonsymmetric discretized partial differential equation (PDE) problems.
Discretization and solver details are discussed in \cref{sec:num-disc-setup,sec:num-solver-setup}, respectively. An advection-reaction problem is considered in \cref{sec:num-adv-rea}, and then a wave-equation problem in \cref{sec:num-wave}.
For notational simplicity throughout this section, we write $E_{\rm TG}^{\nu_1,\nu_2} \equiv E_{\rm TG}^{\nu_1,\nu_2}(P_{\sharp}, R_{\sharp})$.
\subsection{PDE description and discretization setup}
\label{sec:num-disc-setup}

We consider two PDE problems, with all discretizations implemented through the finite-element library Firedrake \cite{FiredrakeUserManual}.
The first problem is a steady advection-reaction equation defined on a 2D unit square domain $\Omega = [0, 1]^2$, given by
\begin{align} \label{eq:adv-rea}
    \bm{b} \cdot \nabla u + c_0 u = 0,
\end{align}
for advected quantity $u(\bm{x})$, advecting velocity field $\bm{b}(\bm{x}) = (\cos(\pi y)^2, \cos(\pi x)^2)^{\top}$, and reaction coefficient $c_0(\bm{x}) = \alpha_0 + \alpha_1 \chi_I(x)\chi_I(y)$. Here, $\alpha_0 = 0.1$, $\alpha_1 = 0.9$, and $\chi_I$ denotes a 1D characteristic function which is equal to $1$ in the interval $I = [0.25, 0.75]$ and zero otherwise. Further, the domain's boundary $\partial \Omega$ is split with respect to $\bm{b}$ into inflow and outflow regions $\partial \Omega^+$ and $\partial \Omega^-$, respectively, and \label{eq:adv_rea} is equipped with inflow boundary conditions $u|_{\bm{x} \in \partial \Omega^+} = u^+\equiv 1$.

We discretize this problem with both standard streamline upwind Petrov-Galerkin ($\mathrm{CG}_p$) and interior penalty-based discontinuous Galerkin ($\mathrm{DG}_p$) methods \cite{kuzmin2010guide}, with the subscript $p$ corresponding to the underlying polynomial degree. The CG-based discretization is given by finding $u_h \in \{v \in \mathrm{CG}_p: v|_{\bm{x} \in \partial \Omega^+} = u^+\}$ such that
\begin{align} \label{eq:adv-rea-CG-def}
\langle \eta + \tau \bm{b} \cdot \nabla \eta, \bm{b} \cdot \nabla u_h + c_0 u_h \rangle = 0, && \forall \eta \in \mathring{\mathrm{CG}}_p,
\end{align}
where $\langle \cdot, \cdot \rangle$ denotes the $L^2$-inner product, and $\mathring{\mathrm{CG}}_p = \{v \in \mathrm{CG}_p: v|_{\bm{x} \in \partial \Omega^+} = 0\}$. Further, the SUPG parameter is set to $\tau = \delta x/(p |\bm{b}|)$, for local mesh size $\delta x$.
The DG-based discretization is given by finding $u_h \in \mathrm{DG}_p$ such that
\begin{equation} \label{eq:adv-rea-DG-def}
\begin{aligned}
&\langle \phi, c_0 u_h \rangle - \langle \nabla_h \phi, \bm{b} u_h\rangle + \int_\Gamma [\![\phi]\!]\{\bm{b} u_h\} \d S + \int_\Gamma \kappa_p |\bm{b}\cdot\bm{n}|[\![\phi]\!][\![u_h]\!] \d S \\
& \hspace{1cm} + \int_{\partial\Omega^+} (\bm{b} \cdot \bm{n})\phi u^+ \d S + \int_{\partial\Omega^-} (\bm{b} \cdot \bm{n})\phi u_h \d S = 0 & \forall \phi \in \mathrm{DG}_p,
\end{aligned}
\end{equation}
where $\Gamma$ denotes the set of interior mesh facets, and $[\![\cdot]\!]$, $\{\cdot\}$ denote the jump and normal component's average values across facets, respectively. The gradient $\nabla_h$ is evaluated cell-wise. The penalty parameter is set to $\kappa_p = 1$. Further, $\bm{n}$ denotes the outward normal unit vector at the domain's boundary. Note that a term of the form $\langle \nabla \cdot \bm{b}, \phi u\rangle$ has been dropped from \eqref{eq:adv-rea-DG-def} since our choice of $\bm{b}$ is divergence-free. Finally, we note that analytic expressions for $\bm{b}$ and $c_0$ are used in the discretization, which are then evaluated at quadrature points.

For both CG and DG we consider a computational mesh with $n_x \times n_y$ quadrilateral elements, where $n_x = n_y = 2 \cdot 2^{r}$ for refinement $r \in \mathbb{N}$.
The problem sizes we consider range from $n = 25$ DOFs up to $n = 2401$ DOFs. 

\begin{remark}[Diagonalizability]
The theory derived herein assumes the matrix pencil $(A,M)$ is diagonalizable. In most numerical PDE settings, we expect this to be the case. One example where this does not hold is upwind DG discretizations of inflow-outflow advection-reaction \eqref{eq:adv-rea}. There, the resulting sparse matrix is block triangular by element block, and thus the matrix $A$ is not diagonalizable \cite{manteuffel2019nonsymmetric,manteuffel2018nonsymmetric}. However, results here demonstrate that a small stabilization in the form of an interior penalty formulation, which introduces global (i.e., downwind) coupling to the discretization, is sufficient for diagonalizability. Moreover, numerical results below indicate that the theory provides good predictive power, even with respect to observed $\ell^2$ error and residual reduction, despite potentially poorly conditioned eignevector matrices.
\end{remark}

The second problem we consider is a time-dependent mixed wave equation defined again on a 2D domain $\Omega = [0, 1]^2$. It is given by
\begingroup
\begin{align}  \label{eq:wave}
    \frac{\partial u}{\partial t} + c \nabla \cdot \bm{p} = 0, \hspace{4ex}
    \frac{\partial \bm{p}}{\partial t} + \nabla u = \bm{0},
\end{align}
\endgroup
where $c$ is defined analogously to $c_0$ in \eqref{eq:adv-rea}, except with $I = [0.2, 0.8]$. The problem is equipped with initial and Dirichlet boundary conditions of the form $u|_{t=0} = 1 + 0.1 \exp(-[(x - 0.5)^2 + (y - 0.5)^2]/0.01)$, $\bm{p}|_{t=0} = \bm{0}$, and $u|_{\bm{x} \in \partial \Omega} = u_b \equiv 1$, respectively. %

We discretize this problem in space using the continuous Galerkin space $\mathrm{CG}_p$ for $u$, and its vectorized version of $\mathrm{CG}_p^2$ for $\bm{p}$. 
Further, in view of technicalities in our code implementation related to reordering DOFs for our solver strategy, we implement the boundary conditions for $u$ weakly. 
To advance the spatially discretized solution from time $t^n$ to $t^{n+1} := t^n + \delta t$, we use a midpoint rule. Altogether, this leads to a discretization of the form
\begingroup
\begin{subequations} \label{eq:wave-disc}
\begin{align} 
    &\langle \eta, u_h^{n+1} - u_h^n + \delta t c \nabla \cdot \bar{\bm{p}}_h \rangle + \delta t \int_{\partial\Omega} \kappa_p \eta (u_h - u_b) \d S = 0 &\forall \eta \in \mathrm{CG}_p, \\
    &\langle \bm{w},  \bm{p}_h^{n+1} - \bm{p}_h^n + \delta t \nabla \bar{u}_h \rangle = 0 & \forall \bm{w} \in \mathrm{CG}_p^2,
\end{align}
\end{subequations}
\endgroup
for midpoints $\bar{u}_h = (u_h^{n+1} + u_h^n)/2$, $\bar{\bm{p}}_h = (\bm{p}_h^{n+1} + \bm{p}_h^{n})/2$, and boundary penalty parameter $\kappa_p = 1$.
Rearranging these equations results in a linear system for the unknown quantities $[u_h^{n+1}, \bm{p}_h^{n+1}]$ in terms of the known quantities $[u_h^{n}, \bm{p}_h^{n}]$.
We consider a computational mesh with $n_x \times n_y$ quadrilateral elements, where $n_x = n_y = 3 \cdot 2^r$ for refinement $r \in \mathbb{N}$.

\subsection{Solver setup}
\label{sec:num-solver-setup}

Now we discuss solver details.
All numerical linear algebra, including solving generalized eigenvalue problems, is performed with SciPy \cite{Scipy-2020}.
When considering two-level solves of a linear system $A \bm{x} = \bm{b}$, we initialize the algorithm over ten randomly generated starting vectors $\bm{x}_0^j \approx \bm{x}$, $j = 1, \ldots, 10$, with the sequence of associated two-level iterates denoted by $\{ \bm{x}_k^j \}_{j = 0}^{k_{\max}}$.
For a fixed $j$, the two-level method is iterated until $k_{\max} := k_{\max}(j)$ iterations, which is the minimum of 20, and the smallest $k$ such that ${\Vert \bm{r}^j_k \Vert / \Vert \bm{r}^j_0 \Vert \leq 10^{-10}}$, for residual $\bm{r}_k = \bm{b} - A \bm{x}_k$. 
For each two-level method, we present the worst-case, geometrically-averaged, $\ell^2$ numerical convergence factors for error and residual, which we define as
\begin{subequations} \label{eq:prop-norm-num-def}
\begin{align}
    \Vert R_{\rm TG}^{\nu_1, \nu_2} \Vert_{\rm num} &:= 
    \max_{j} \left( \frac{ \Vert\bm{r}_{k_{\max}}^j \Vert}{ \Vert \bm{r}_{0}^j \Vert} \right)^{1/{k_{\max}}}
    =
    \max_{j} \left( \frac{
    \Vert (R_{\rm TG}^{\nu_1, \nu_2})^{k_{\max} }
    \bm{r}_{0}^j \Vert }
    {\Vert \bm{r}_{0}^j \Vert} \right)^{1/{k_{\max}}},
    \\
    \Vert E_{\rm TG}^{\nu_1, \nu_2} \Vert_{\rm num} &:= 
    \max_{j} \left( \frac{\Vert \bm{e}_{k_{\max}}^j \Vert}{\Vert \bm{e}_{0}^j \Vert} \right)^{1/{k_{\max}}}
    =
    \max_{j} \left( \frac{\Vert
    (E_{\rm TG}^{\nu_1, \nu_2})^{k_{\max}}
    \bm{e}_{0}^j \Vert}
    {\Vert \bm{e}_{0}^j \Vert} \right)^{1/{k_{\max}}}.
\end{align}
\end{subequations}
Here, $R_{\rm TG}^{\nu_1, \nu_2}$ is the two-level residual propagation matrix, and is similar to error propagation, $R_{\rm TG}^{\nu_1, \nu_2} := A E_{\rm TG}^{\nu_1, \nu_2} A^{-1}$. 
For each two-level solver, we also explicitly compute $\Vert E^{\nu_1, \nu_2}_{\rm TG} \Vert_{\cal N} 
= 
\Vert ({\cal D} V_r^{-1}) E_{\rm TG}^{ \nu_1, \nu_2 } ({\cal D} V_r^{-1})^{-1} \Vert$ (see \eqref{eq:calM-norm-def-mat}), with $E_{\rm TG}^{ \nu_1, \nu_2 }$ formed according to \eqref{eq:two-grid}, and with ${\cal D} = I$ for simplicity.
In all tests, the number of pre- and post-preconditioning steps is fixed at $\nu_1 = \nu_2 = 1$.

Our numerical results compare \eqref{eq:prop-norm-num-def} with $|1 - \lambda_{n_c+1}|^{\nu_1 + \nu_2}$, which is the the theoretically predicted value for $\Vert E^{\nu_1, \nu_2}_{\rm TG} \Vert_{\cal N}$, and for $\rho(E^{\nu_1, \nu_2}_{\rm TG} )$ according to \cref{thm:two-grid-conv,thm:real-valued}.
Note that as $k_{\max} \to \infty$, we have $\Vert R_{\rm TG}^{\nu_1, \nu_2} \Vert_{\rm num} \to |1 - \lambda_{n_c+1}|^{\nu_1 + \nu_2}$ and $\Vert E_{\rm TG}^{\nu_1, \nu_2} \Vert_{\rm num} \to |1 - \lambda_{n_c+1}|^{\nu_1 + \nu_2}$, because the quantities in \eqref{eq:prop-norm-num-def} limit to the spectral radii of the associated propagators as $k_{\max} \to \infty$, and note that the propagators have the same spectral radii since they are similar.

\begin{figure}[b!]
    \centering
    \includegraphics[width=0.475\linewidth]{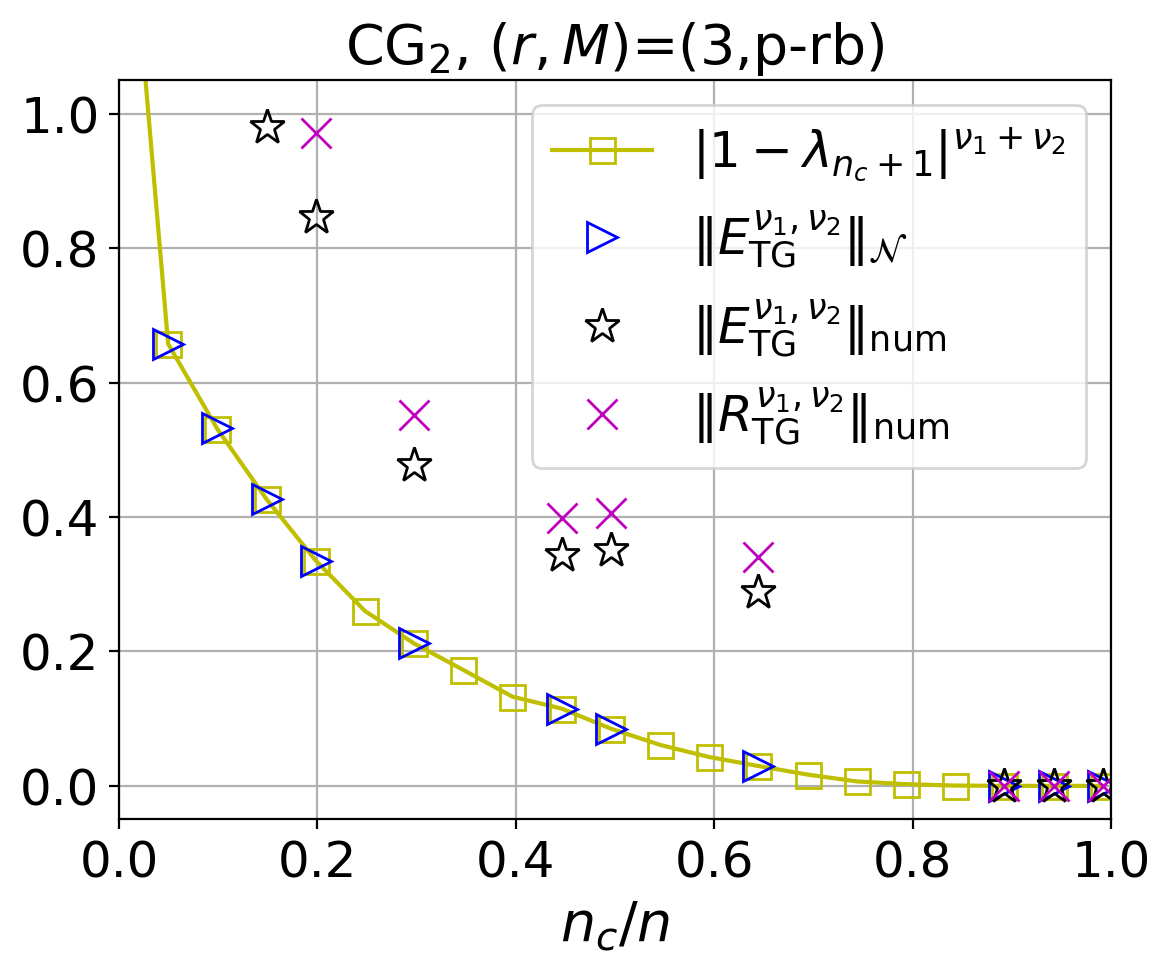}
    \hspace{1.5ex}
    \includegraphics[width=0.475\linewidth]{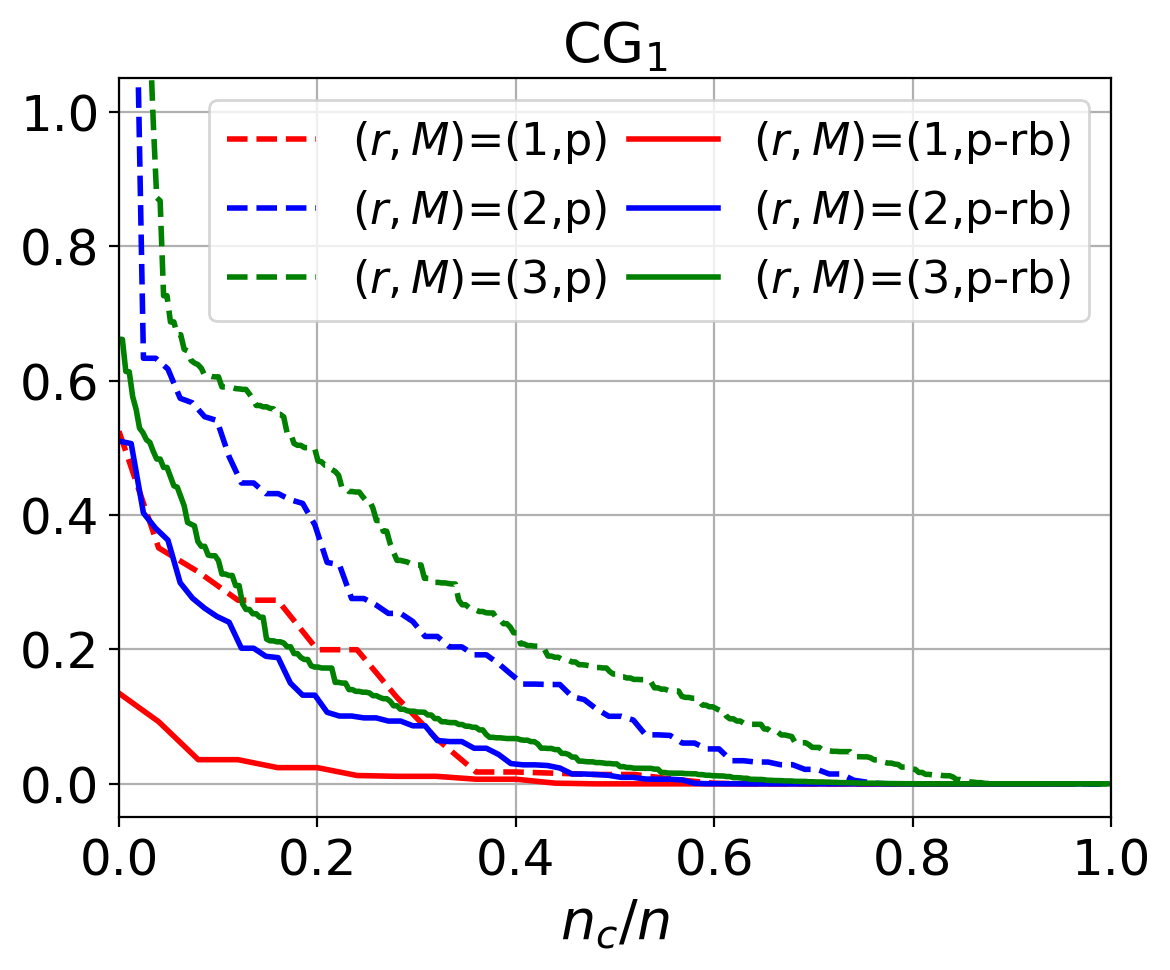}

    \vspace{0.5ex}
    
    \includegraphics[width=0.475\linewidth]{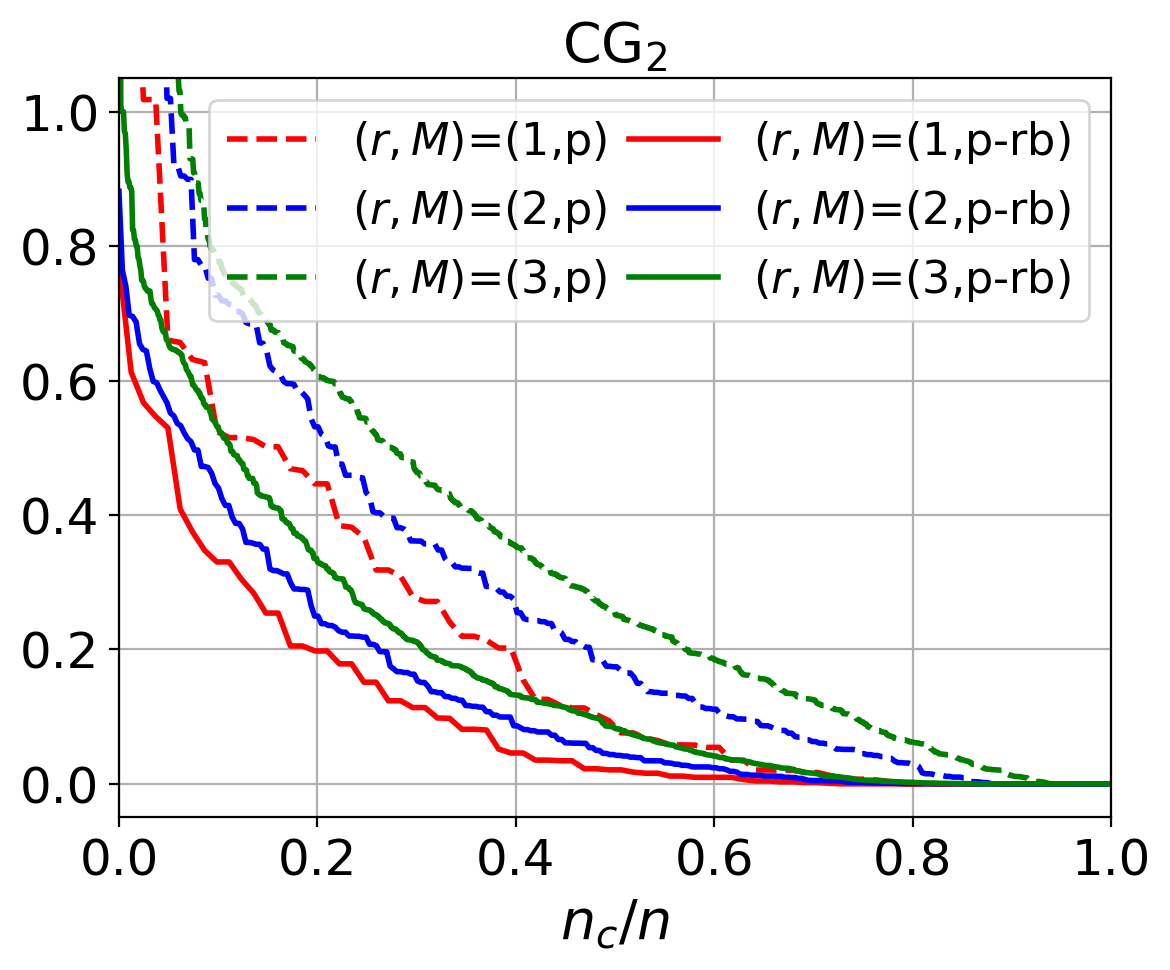}
    \hspace{1.5ex}
    \includegraphics[width=0.475\linewidth]{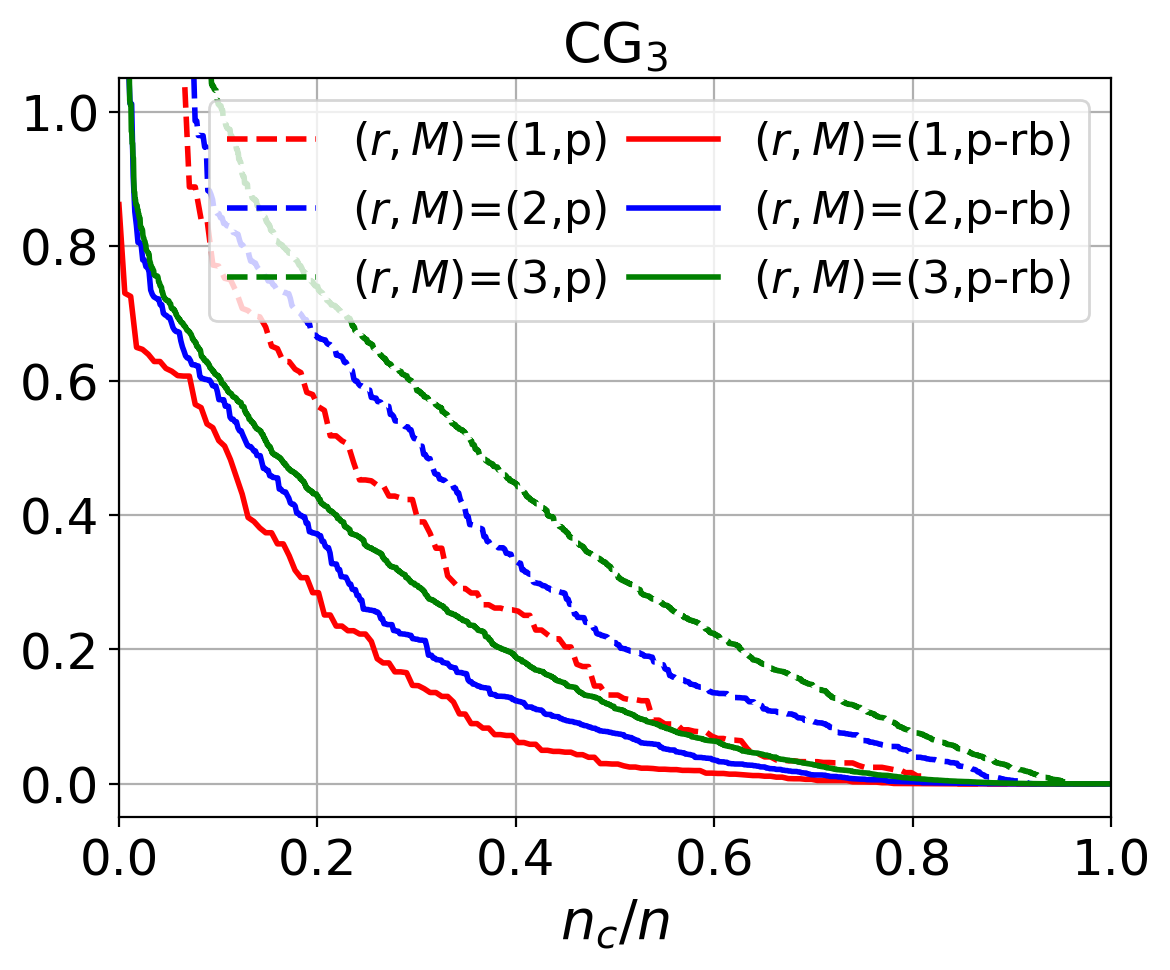}

    \caption{
    $\mathrm{CG}_p$ discretization \eqref{eq:adv-rea-CG-def} of the advection-reaction problem \eqref{eq:adv-rea} with refinement $r$ and fine-space preconditioner $M$.
    Top left: Numerical verification of theory for the specific example of CG$_2$, $r = 3$, and $M = $ point-wise red-black Jacobi.
    Remaining plots are $|1 - \lambda_{n_c+1}|^{\nu_1 + \nu_2}$ for polynomial degrees $p \in \{1, 2, 3\}$, as indicated in their titles. 
    \label{fig:advect-react-CG}
    }
\end{figure}

Now we discuss the fine-space preconditioner $M$.
For simplicity, we consider only Jacobi-based methods for $M$. 
We consider red-black Jacobi as well as standard Jacobi, with each method also being considered in both point-wise and block-wise fashion. 
Given a partitioning of DOFs into red ``$r$'' and black ``$b$'' points, a red-black Jacobi iteration first does a standard Jacobi update only on the red DOFs, and then uses this updated information at red points to do a Jacobi update only on the black DOFs. 
One can show, e.g., see \cite[Sec. 3.3]{southworth2019necessary} where red-black Jacobi corresponds to FC-relaxation, that the preconditioner matrix $M$ for red-black Jacobi is given by
\begin{align} \label{eq:M-def-rb-jac}
    M
    =
    {\cal P}
    \begin{bmatrix}
        D_{rr} & 0 \\
        A_{br} & D_{bb} 
    \end{bmatrix}
    {\cal P}^\top,
\end{align}
where $D$ is the diagonal of $A$, and ${\cal P} \colon \mathbb{R}^n \to \mathbb{R}^n$ is a permutation matrix mapping vectors from their underlying ordering into one where all red points are blocked together followed by all black points.
The matrix \eqref{eq:M-def-rb-jac} is clearly nonsymmetric so long as $\dim(r), \dim(b) > 0$.
For $\mathrm{DG}_p$ we also consider block Jacobi, where DOFs in an element are blocked together, such that each block has $(p+1)^2$ DOFs.
Block red-black Jacobi generalizes the point-wise version, where one instead partitions the blocked DOFs into red blocks and black blocks, and the red-only and black-only standard Jacobi steps are replaced with red-only and black-only block Jacobi steps; the form of $M$ is analogous to that in \eqref{eq:M-def-rb-jac}, except that the matrices are replaced with their block generalizations.
We partition DOFs into red and black points (or DG elements into red and black blocks in the block case) using the Ruge--St{\"u}ben ``CF'' splitting algorithm \cite{Ruge-Stuben-1987}, as implemented in PyAMG \cite{Bell-etal-2023}, where we assign red points as the resulting ``F-points,'' and black points as the ``C-points'' (the CF points from the Ruge--St{\"u}ben splitting should not be confused with the genuine coarse-fine splitting of DOFs induced by the smallest generalized eigenvectors of $(A, M)$). Hence, this preconditioner mimics FC-Jacobi, which we have used previously as a relaxation method when applying AMG to advection-related problems \cite{manteuffel2018nonsymmetric,Ali-etal-2024-optimalP-gen}.
In the Ruge--St{\"u}ben splitting, the classical strength measure is used, with a threshold of $\theta = 0.25$.  
In certain plots, lines are indicated with the pair $(r,M)$, for $r$ mesh refinement levels and fine-space preconditioner $M \in\{\mathrm{p}$,$\mathrm{b}$,$\mathrm{p}$-$\mathrm{rb}$,$\mathrm{b}$-$\mathrm{rb}\}$, where $\mathrm{p}$ and $\mathrm{b}$ denote point-wise or block Jacobi methods, respectively, -$\mathrm{rb}$ indicates a red-black splitting. For example, $\mathrm{p}\equiv$ point-wise Jacobi, and $\mathrm{p}$-$\mathrm{rb}\equiv$ point-wise red-black Jacobi.

\subsection{Advection-reaction equation}
\label{sec:num-adv-rea}

\begin{figure}[b!]
    \centering
    \includegraphics[width=0.475\linewidth]{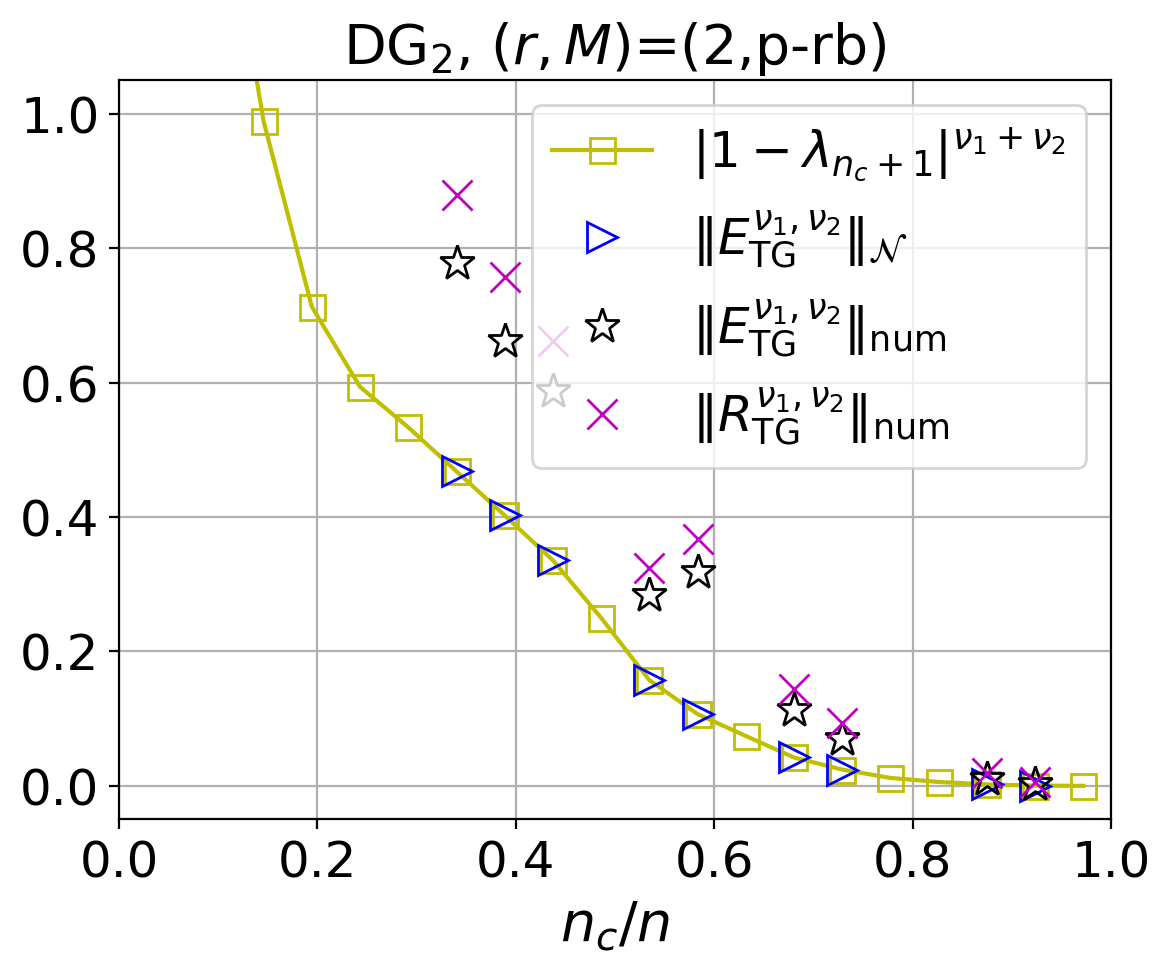}
    \hspace{1.5ex}
    \includegraphics[width=0.475\linewidth]{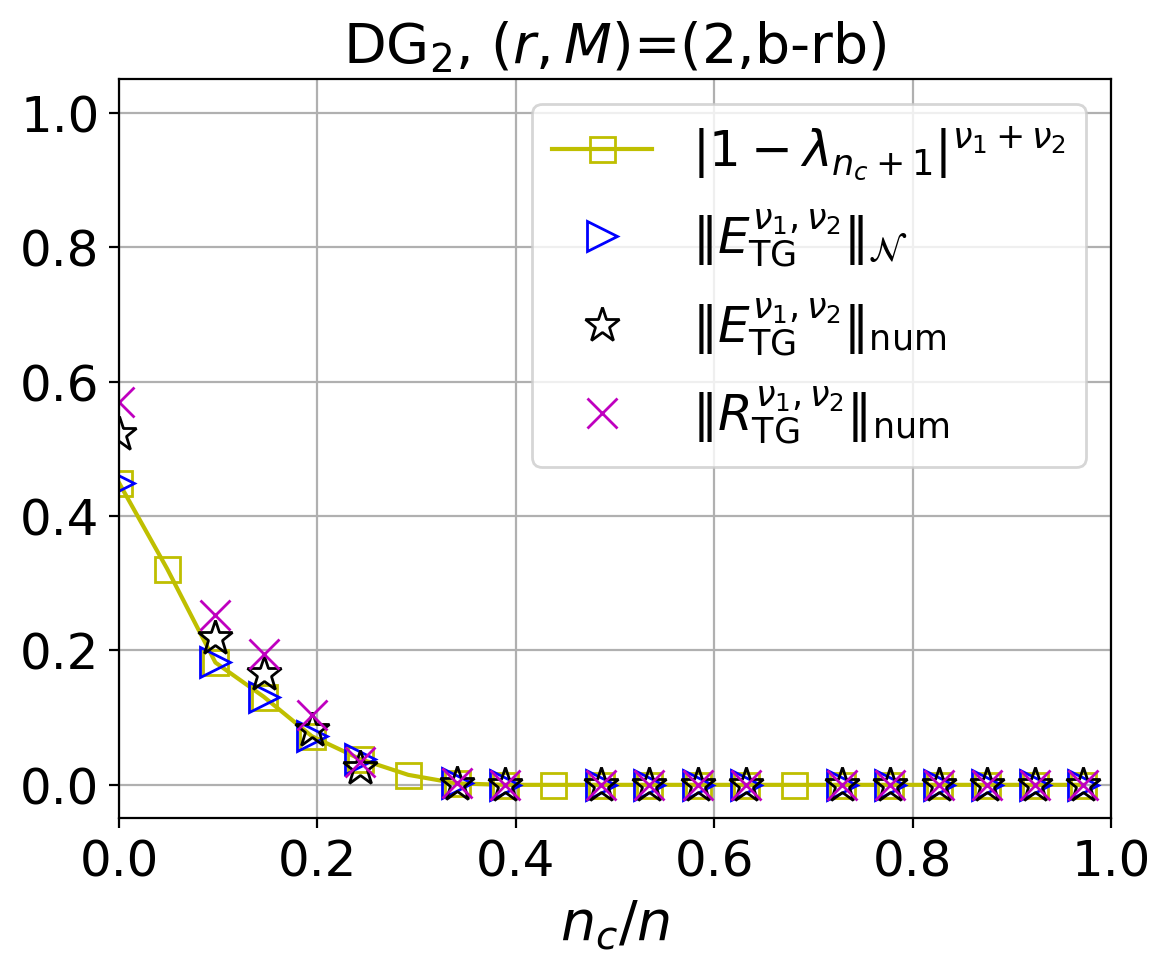}

    \vspace{0.5ex}

    \includegraphics[width=0.475\linewidth]{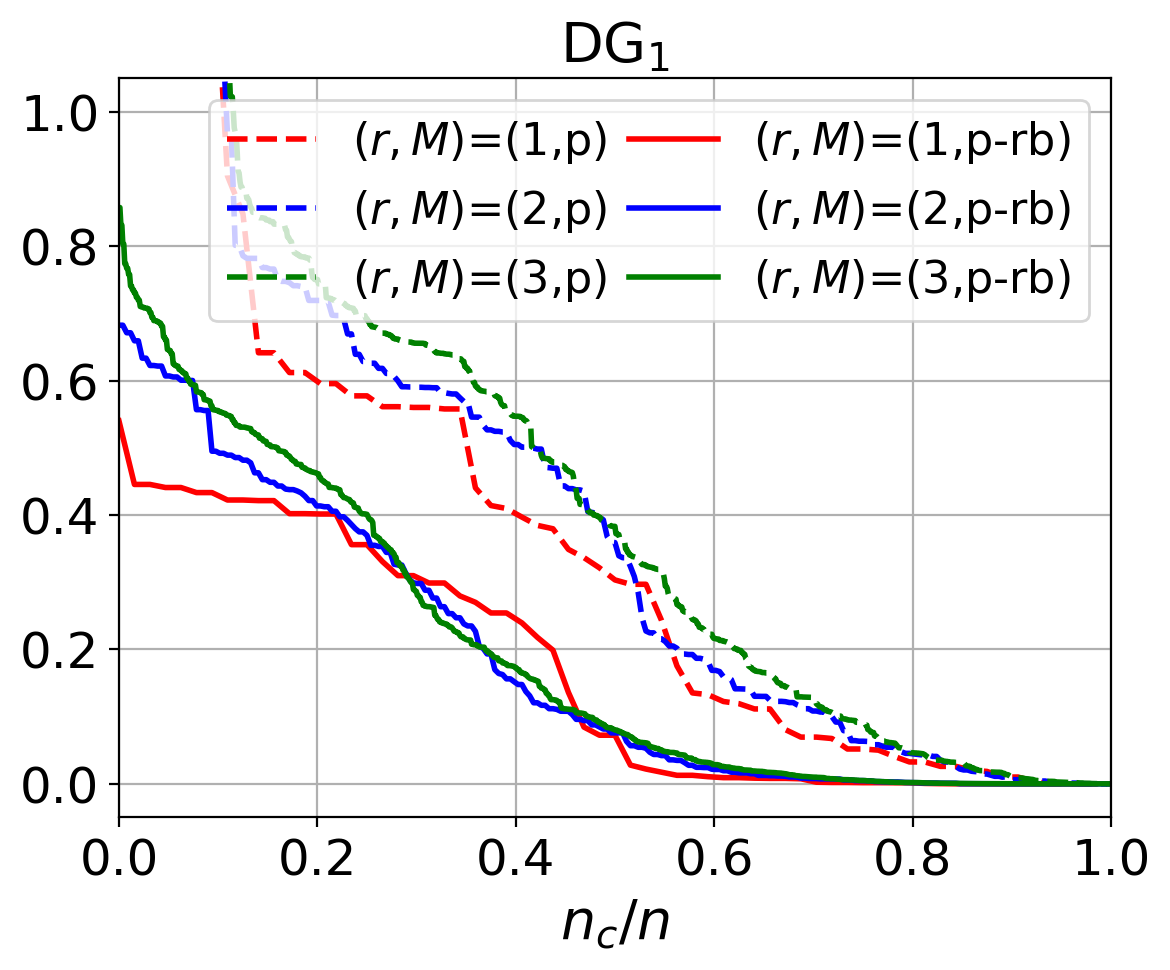}
    \hspace{1.5ex}
    \includegraphics[width=0.475\linewidth]{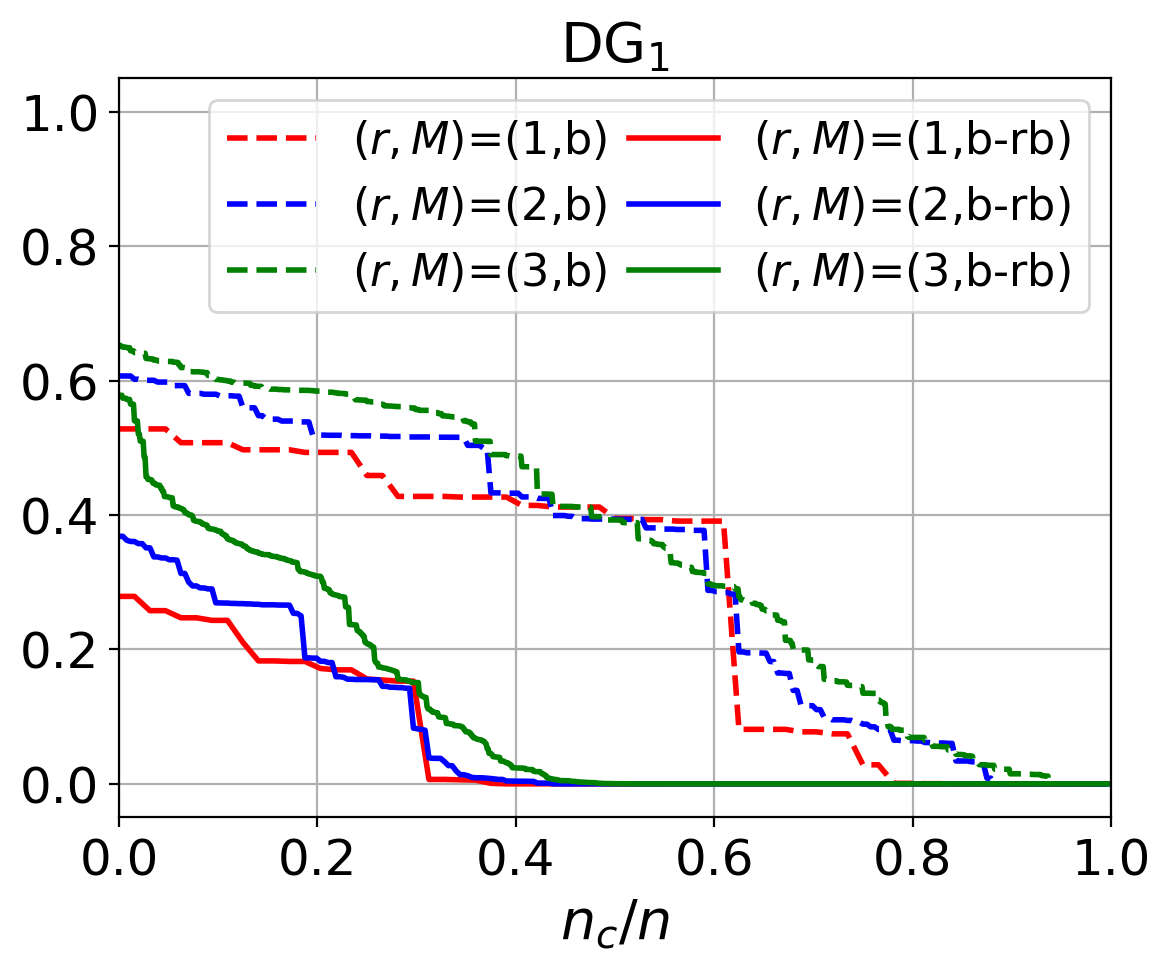}

    \vspace{0.5ex}
    
    \includegraphics[width=0.475\linewidth]{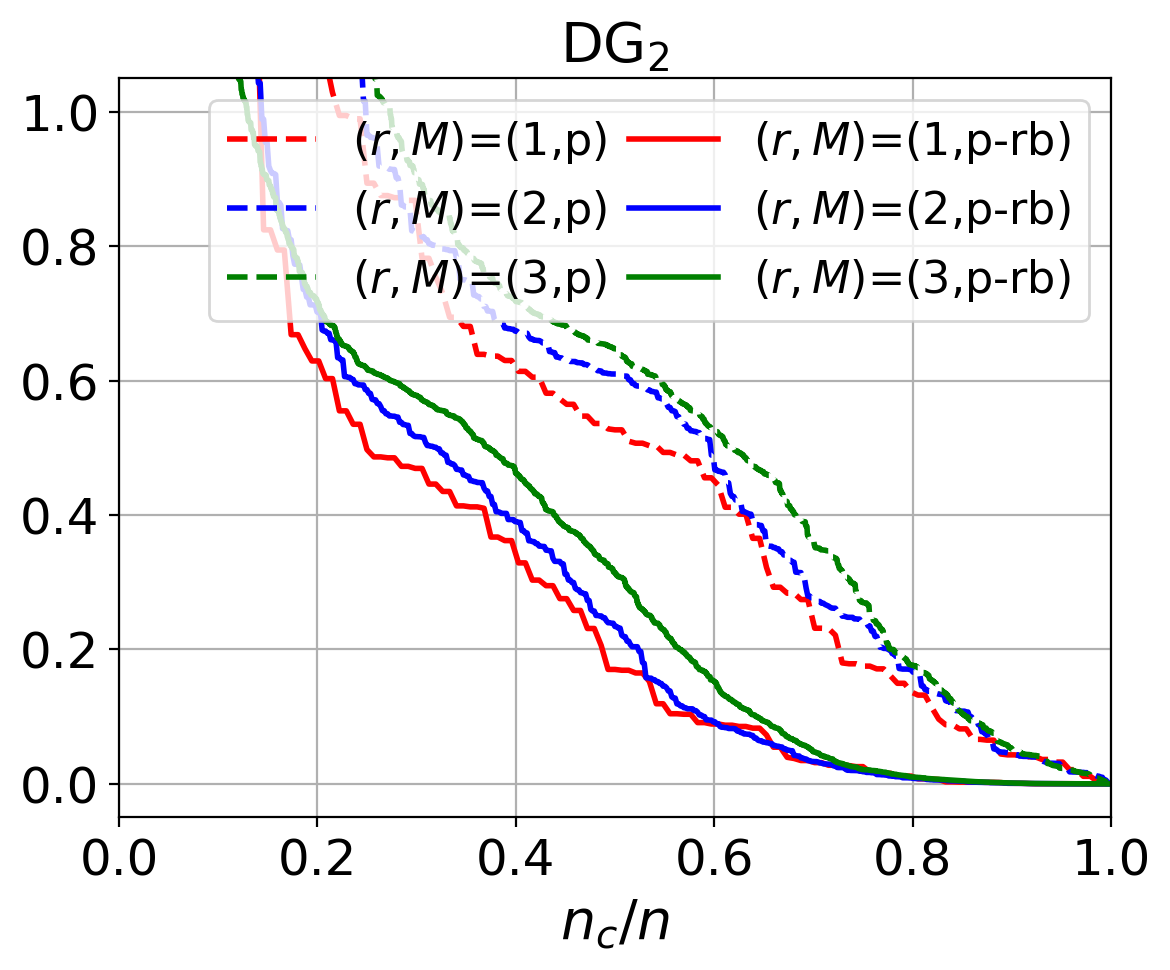}
    \hspace{1.5ex}
    \includegraphics[width=0.475\linewidth]{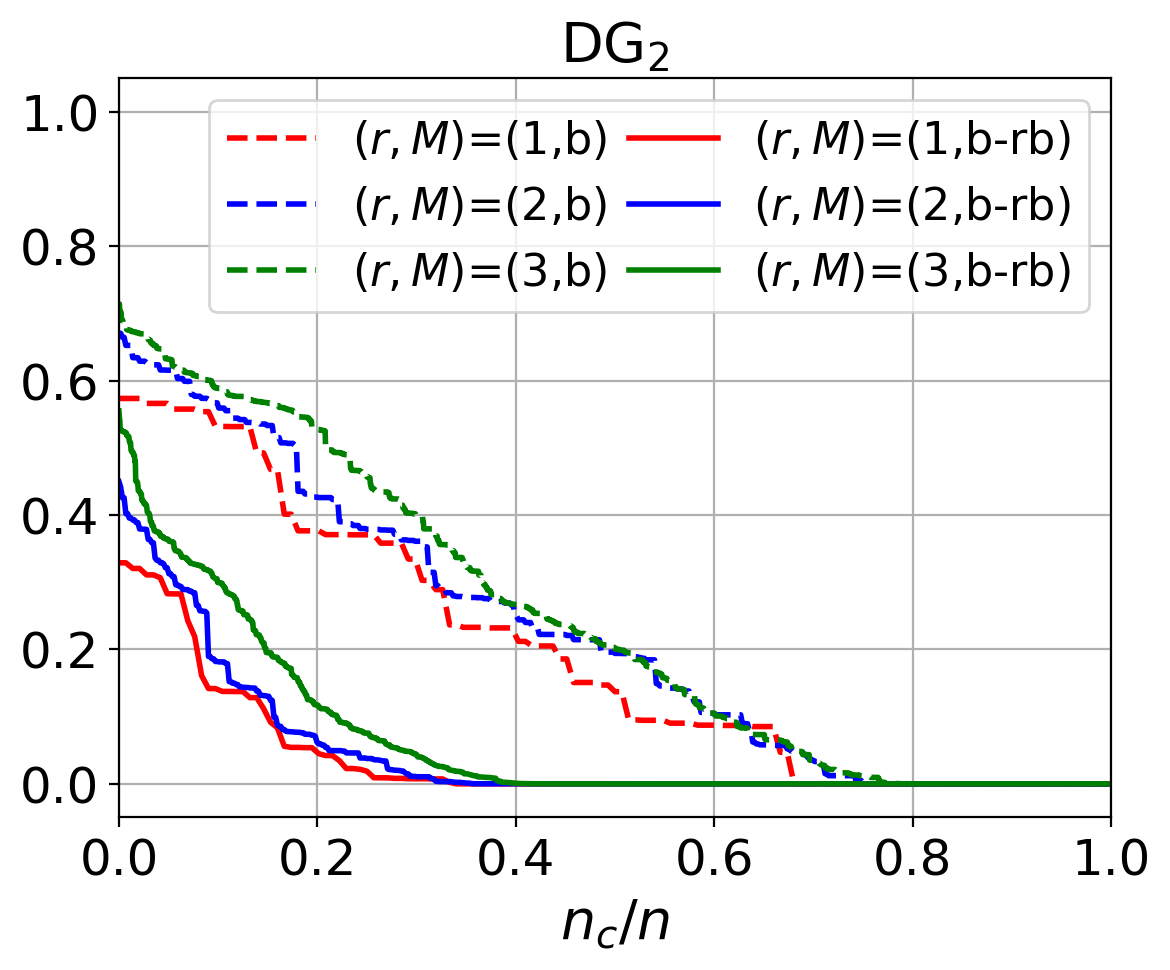}

    \caption{
    $\mathrm{DG}_p$ discretization \eqref{eq:adv-rea-DG-def} of the advection-reaction problem \eqref{eq:adv-rea} with refinement $r$ and fine-space preconditioner $M$.
    \textbf{Left column:} point-wise Jacobi.
    \textbf{Right column:} block Jacobi.
    Top row: Numerical verification of theory for DG$_2$ with refinement $r = 2$, and point-wise (left) and block (right) red-black Jacobi.
    Middle and bottom rows are plots of $|1 - \lambda_{n_c+1}|^{\nu_1 + \nu_2}$ for polynomial degrees $p$ indicated in the titles. 
    \label{fig:advect-react-DG}
    }
\end{figure}

Now we consider numerical results for the advection-reaction problem \eqref{eq:adv-rea}, beginning with the $\mathrm{CG}_p$ discretization \eqref{eq:adv-rea-CG-def} shown in \cref{fig:advect-react-CG}.
Considering the top left panel in \cref{fig:advect-react-CG}, we see exact agreement between numerically computed values of $\Vert E_{\rm TG}^{\nu_1, \nu_2} \Vert_{\cal N}$ and $|1 - \lambda_{n_c+1}|^{\nu_1 + \nu_2}$, as predicted by theory.
Observe that the numerically observed residual and error reduction are somewhat larger than the value of $|1 - \lambda_{n_c+1}|^{\nu_1 + \nu_2}$, which they ultimately limit to as $k_{\max} \to \infty$.
Considering the other plots in \cref{fig:advect-react-CG}, we see that red-black Jacobi always results in faster two-level convergence than standard Jacobi. It is also interesting to observe that two-level convergence, for a fixed $n_c/n$, seems to be roughly scalable with respect to mesh resolution $r$ and polynomial order $p$, particularly for larger $p$. 
Finally, notice in many cases that the two-level method is not convergent when the coarsening is extremely aggressive, $n_c/n \approx 0$, but that for moderate coarsening rates two-level convergence is always restored.

Next we consider the advection-reaction problem with the DG$_{p}$ discretization \eqref{eq:adv-rea-DG-def} in \cref{fig:advect-react-DG}.
Many of the trends seen in the $\mathrm{CG}_p$ case (see \cref{fig:advect-react-CG}) carry over to the $\mathrm{DG}_p$ case. Considering the top right panel, it is interesting that there is much closer agreement between the numerically observed convergence factors and their asymptotic limit, which indicates that the pre-asymptotic convergence phase in this case is relatively shorter than for the other problems.
Considering the other plots, there is, in most cases, a stark contrast between block and point-wise Jacobi around $n_c/n \approx 0$, where the block methods converge fairly quickly, while the point-wise-based methods diverge in most cases.
However, standard block Jacobi is not always stronger than standard point-wise Jacobi, e.g., for DG$_1$ around $n_c/n \approx 0.6$, where two-level convergence for point-wise is faster. 
As in \cref{fig:advect-react-CG}, dramatic improvement occurs from using red-black-based Jacobi compared to standard Jacobi.

\subsection{Mixed wave equation}
\label{sec:num-wave}

\begin{figure}[b!]
    \centering
    \includegraphics[width=0.475\textwidth]{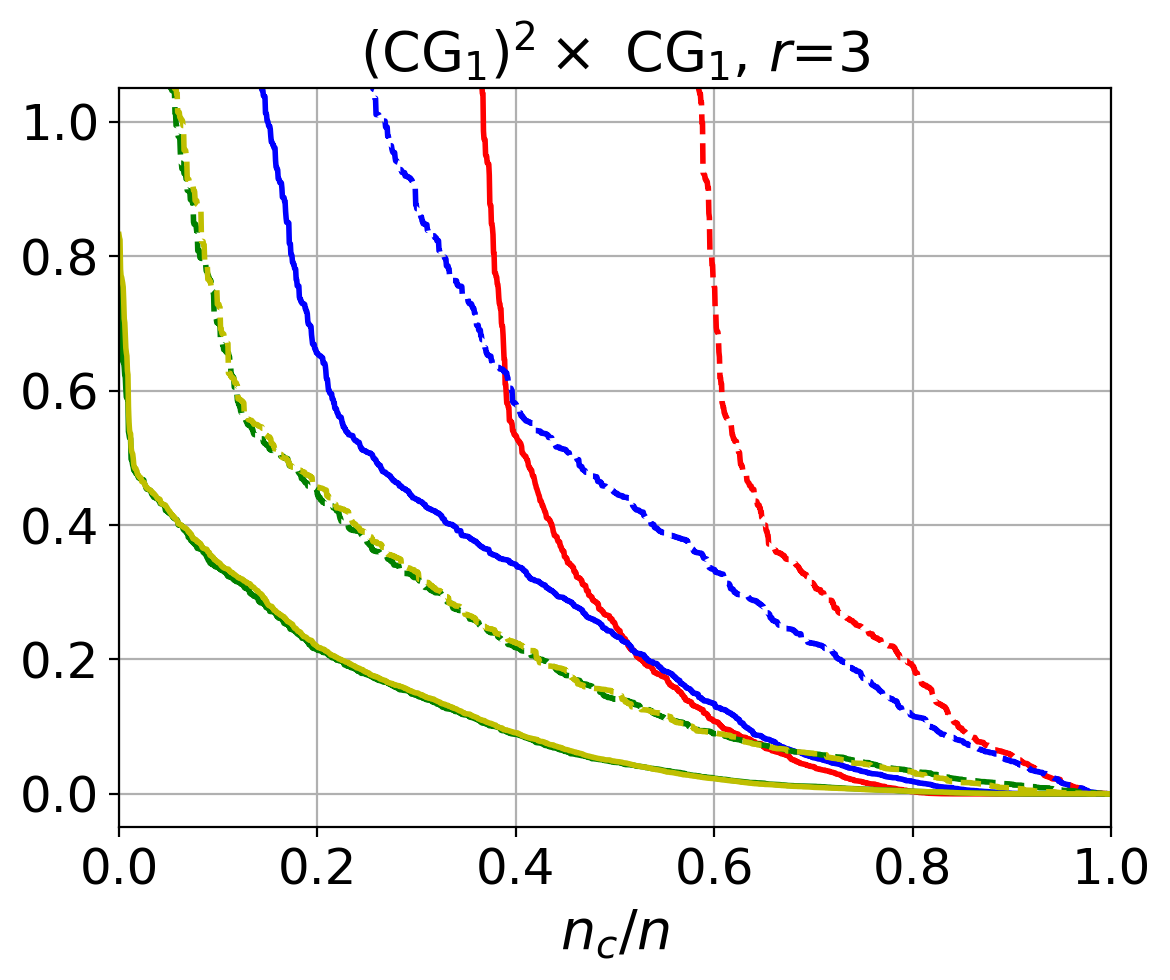}
    \hspace{1.5ex}
    \includegraphics[width=0.475\textwidth]{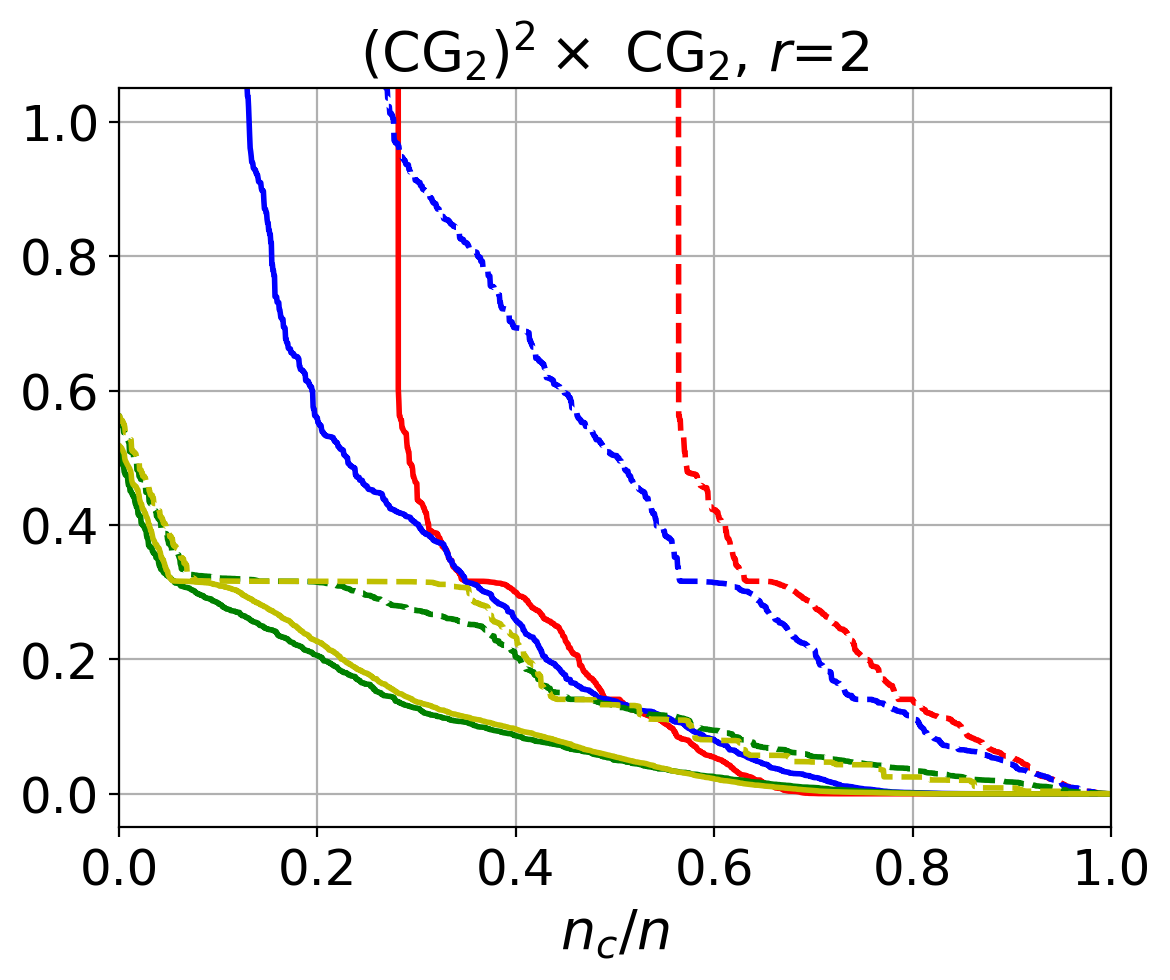}

    \includegraphics[width=1\textwidth]{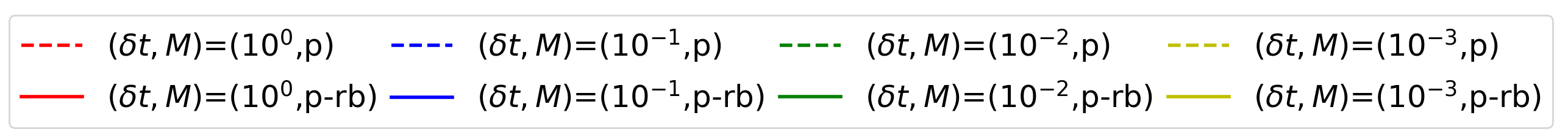}
    
    \caption{
    $(\mathrm{CG}_p)^2 \times (\mathrm{CG}_p)$ discretization \eqref{eq:wave-disc} of mixed wave equation with \eqref{eq:wave} with refinement $r$, time-step size $\delta t$ and fine-space preconditioner $M$ (where ``p'' $\equiv$ point-wise Jacobi, and ``p-rb'' $\equiv$ point-wise red-black Jacobi).
    Plots show $|1 - \lambda_{n_c+1}|^{\nu_1 + \nu_2}$ for polynomial degrees $p$ and refinements $r$ indicated in the titles. 
    All tests have $n = 1875$ DOFs.
    \label{fig:wave}
    }
\end{figure}

Now we consider the mixed wave equation \eqref{eq:wave}.
In particular, we consider the linear system resulting from the discretization \eqref{eq:wave-disc} for a fixed time-step size $ \delta t \in \{ 1, 10^{-1}, 10^{-2}, 10^{-3} \}$.
For each $\delta t$ we examine the optimal two-level convergence factor ${|1 - \lambda_{n_c+1}|^{\nu_1 + \nu_2}}$ to understand to what extent the resulting linear system is solvable (or not) with a two-level method given  our choice of $M$.
For preconditioners $M$ we consider both point-wise Jacobi and point-wise red-black Jacobi. 
Results are shown in \cref{fig:wave} for two different polynomial degrees $p$.
First, observe that red-black is significantly stronger than standard Jacobi almost everywhere, just as was the case for the advection-reaction problem (see \cref{fig:advect-react-CG,fig:advect-react-DG}).
Secondly, observe that the system gets progressively harder to solve for larger $\delta t$, corresponding to increasing stiffness. 
Finally, it is rather interesting to consider the implication of ${|1 - \lambda_{n_c+1}|^{\nu_1 + \nu_2}}$ being larger than unity for quite mild coarsening rates $n_c/n$, particularly for larger values of $\delta t$. That is, recall from \cref{cor:nec-and-suf} that for fixed $n_c$, the condition $|1 - \lambda_{n_c+1}| < 1$ is \emph{necessary} for there to even exist a convergent two-level method (given $M$). 
For example, these plots tell us that when $\delta t = 1$ and using a Jacobi preconditioner, one cannot coarsen by more than a factor of two ($n_c / n < 0.5$) and have a convergent method, \emph{regardless} of the interpolation and restriction used. We also tested a block Jacobi preconditioner, where we group nodal CG DOFs associated with the scalar and vector variables into $3\times 3$ blocks. This block-based method offered marginal to no improvement over point-wise Jacobi methods (results are not shown, both for brevity and unimpressive performance).

\section{Conclusions}
\label{sec:con}

We generalize the optimal interpolation framework originally proposed by Brannick et al. \cite{Brannick-etal-2018-optimalP} for HPD linear systems, and then recently extended to non-HPD systems by Ali et al. \cite{Ali-etal-2024-optimalP-gen}.
Specifically, we consider transfer operators constructed from the smallest (in a certain sense) $n_c < n$ generalized left (restriction) and right (interpolation) eigenvectors of the matrix pencil $(A, M)$, with fine-space preconditioner $M \approx A$. 
Complex-valued realizations of these transfer operators $\{R_\#,P_\#\}$ were already proposed in \cite{Ali-etal-2024-optimalP-gen}, but therein genuine optimality was not shown, and only asymptotic convergence of the associated two-level method was proven.
Here we develop tight, norm-based convergence bounds, and show genuine optimality of these transfer operators with respect to norm-based convergence.
We also describe how to construct real-valued optimal transfer operators that yield identical convergence (and corresponding optimality) when the pencil $(A, M)$ has real-valued, non-SPD matrices, since those that naturally arise from the generalized eigenvalue problem are almost certainly complex valued in this case.

The optimal transfer operators considered herein are not practical because they are dense, and their construction requires solving global-sized generalized eigenvalue problems.
In future work we will pursue practical two-level methods through local approximations to these transfer operators.

\bibliographystyle{siamplain}
\bibliography{Optimal-P-SIMAX/optimal-P-refs}
\end{document}